\numberwithin{equation}{section}
\theoremstyle{plain}
\newtheorem{theorem}{Theorem}[section]
\newtheorem{lemma}[theorem]{Lemma}
\newtheorem{corollary}[theorem]{Corollary}
\newtheorem{proposition}[theorem]{Proposition}
\theoremstyle{definition}
\newtheorem{definition}[theorem]{Definition}
\newcommand{\RR}{{\mathbb{R}}}
\newcommand{\ZZ}{{\mathbb{Z}}}
\newcommand{\eps}{\varepsilon}
\newcommand{\vp}{\varphi}
\renewcommand{\d}{\partial}
\newcommand{\sm}{\setminus}
\renewcommand{\tilde}{\widetilde}
\newcommand{\wh}{\widehat}
\renewcommand{\bar}{\overline}
\def\div{\mathop{\operatorname{div}}}
\begin{document}

\author[D. N. Arnold]{Douglas N. Arnold}
\email[]{arnold@umn.edu}
\address{School of Mathematics, University of Minnesota, Minneapolis, MN, USA}

\author[G. David]{Guy David}
\email[]{guy.david@u-psud.fr} 
\address{Univ Paris-Sud, Laboratoire de Math\'ematiques, CNRS, UMR 8658 Orsay, F-91405}

\author[M. Filoche]{Marcel Filoche}
\email[]{marcel.filoche@polytechnique.edu}
\address{Physique de la Mati\`ere Condens\'ee, Ecole Polytechnique, CNRS, Palaiseau, France}

\author[D. Jerison]{David Jerison}
\email[]{jerison@math.mit.edu}
\address{Mathematics Department, Massachusetts Institute of Technology, Cambridge, MA, USA}

\author[S. Mayboroda]{Svitlana Mayboroda}
\email[]{svitlana@math.umn.edu}
\address{School of Mathematics, University of Minnesota, Minneapolis, MN, USA}

\thanks {
Arnold is partially supported by the NSF grant DMS-1719694 and Simons Foundation grant 601937, DNA.
David is supported in part by the ANR, programme blanc GEOMETRYA ANR-12-BS01-0014,
EC Marie Curie grant MANET 607643, H2020 grant GHAIA 777822, and Simons Foundation grant 601941, GD.
Filoche was supported in part by Simons Foundation grant 601944, MF.
Jerison was partially supported by NSF Grants  DMS-1069225 and DMS-1500771, a Simons Fellowship,
and Simons Foundation grant 601948, DJ.
Mayboroda is supported in part by the Alfred P. Sloan Fellowship, the NSF INSPIRE Award DMS 1344235, NSF CAREER Award DMS 1220089, a Simons Fellowship, and Simons Foundation grant 563916, SM.
Part of this work was completed during Mayboroda's visit to Universit\'e Paris-Sud, Laboratoire de Math\'ematiques, Orsay, and Ecole Polytechnique, PMC, and we thank the corresponding Departments and Fondation Jacques Hadamard for support and hospitality.
}

\title[Localization of eigenfunctions]
{Localization of eigenfunctions via an effective potential}

\begin{abstract}
We consider the localization of eigenfunctions for the operator $L=-\div A \nabla + V$ on a Lipschitz domain $\Omega$ and, more generally, on manifolds with and without boundary. In earlier work, two authors of the present paper demonstrated 
the remarkable ability of the landscape, defined as the solution to $Lu=1$, to predict the location of the localized eigenfunctions.   Here, we explain and justify a new framework
that reveals a richly detailed portrait of the eigenfunctions and eigenvalues.   
We show that the {\em reciprocal} of the landscape function, $1/u$, acts as 
an {\em effective potential}.    Hence from the single measurement of $u$,
we obtain, via $1/u$, explicit bounds on the exponential decay of 
the eigenfunctions of the system and estimates on the distribution of eigenvalues 
near the bottom of the spectrum. 
\end{abstract}

\date{\today }

\maketitle

\section{Introduction}\label{intro}

The term localization refers to a wide range of phenomena in mathematics and condensed matter physics in which eigenfunctions of an elliptic system concentrate on a small portion of the original domain and nearly vanish in the remainder, hindering or preventing wave propagation.  For many decades, its different manifestations have been a source of wide interest, with an enormous array of applications.   In addition to celebrated results
concerning localization by disordered potentials \cite{50years, An, AM, FS, BK, EA}, 
there is localization by randomness in the coefficients of $-{\rm div} A \nabla$ and of the Maxwell system \cite{FK1, FK2}, localization by a quasiperiodic potential \cite{JL}, and localization by fractal boundaries \cite{F+2007}, to mention only a few examples.
However, with the notable exception of the recent work \cite{JL} for a 1D almost Matthieu operator, these results do not address detailed, deterministic
geometric features of the localized eigenfunctions. 

The present paper changes the point of view through the introduction of a new {\em effective potential}, 
and applies it to establish the location, shape,  and a detailed structure of the exponential decay of
the eigenfunctions of the operator $L = -\div  A \nabla  + V$ on a finite domain, as well as estimates on 
its spectrum.

In 2012, Filoche and Mayboroda introduced the concept of the {\it landscape}, namely the solution $u$ to $Lu=1$ for an elliptic operator $L$, and showed that this single
function  has  remarkable power to predict the shape and location of localized low energy eigenfunctions of $L$, whether the localization is triggered by the disorder of the potential, the geometry of the domain, or both (see \cite{FM-PNAS}). These ideas led 
to beautiful new results in mathematics \cite{St, LS}, as well as theoretical and experimental physics \cite{PRL-3}.   

In this paper and its companion papers \cite{PRL-2} in physics and \cite{SP}
in computational mathematics,  we propose a new framework that greatly extends 
the predictive  power of the landscape function $u$. 
We show that the reciprocal $1/u$ of the landscape function should be viewed as
an {\em effective quantum potential} revealing detailed structure of the eigenfunctions.
The eigenfunctions of $L$ reside 
in the wells of $1/u$ and decay exponentially across the barriers of $1/u$.  
Under hypotheses on the behavior of $u$ that can be confirmed easily
and efficiently numerically, the original domain splits into independently vibrating regions,
and  the global eigenfunctions are exponentially close to eigenfunctions of subregions.  
As a corollary, we prove an approximate diagonalization of the operator
and confirm that localization according to $1/u$ gives an accurate eigenvalue count up to exponential
errors.

Predicting the eigenvalue count or ``density of states" is an important goal linking
this paper to the other two.    The proposal in \cite{PRL-2} to
use $1/u$ to estimate the density of states, starting from the very bottom
of the spectrum, has provoked a burst of applications beyond the scope
of the single-particle Schr\"odinger equation. 
In particular, in the context of the Poisson-Schr\"odinger system, the paper
\cite{FPW+}  finds an iterative algorithm
that speeds up the time it takes to compute the performance of the type of semiconductor 
used in LED devices from one year to one day.   
The key to this acceleration is that at each step of the iteration, a new potential is computed as a function of the density of states.   This modifies in turn the operator $L$ and therefore the effective potential $1/u$
from which the next density of states is derived, without ever solving the Schr\"odinger equation.
In the companion article \cite{SP} in computational mathematics,
we explore systematically efficient shortcuts leading from the effective potential to the density of states.

Although some of our applications are to random regimes,
the effective potential $1/u$ is a deterministic tool.  It is
not designed to replace probabilistic methods, but to complement 
and enhance them by providing a new way to detect the quantum geometry 
of disordered materials.   Statistical mechanics  often treats the source of disorder 
as a black box, whereas this mechanism allows us to enter the box and identify 
detailed deterministic features of the disorder.

The paper is organized as follows. In Section \ref{sec:outline}, we state our results in a
special case and illustrate their numerical significance.  
In Section \ref{sec:prelim}, we give our main definitions and state some preliminary estimates on the landscape function and eigenfunctions. In Section \ref{sec:agmon}, we derive our exponential decay estimates, known
as Agmon estimates, in the setting of bounded domains in $\RR^n$.  In Section \ref{sec:diagonalization},
we deduce the approximate diagonalization into localized eigenfunctions and estimates on the eigenvalue distribution.
Finally, in Section \ref{sec:manifolds},
we describe how to generalize our theorems to manifolds and prove the boundary regularity theorems stated in Section \ref{sec:prelim}. We also address the difficulty that  Agmon metrics are only defined for continuous coefficient matrices $A$; because our estimates are independent of the modulus of continuity, we are able to use a fairly straightforward procedure to approximate bounded measurable coefficient matrices by continuous ones.

\section{Outline of Results and Comparison with Numerical Examples}\label{sec:outline}

To describe our results we consider the very special case in which the operator is (minus) the ordinary Laplace operator plus a nonnegative, bounded potential,
\[
L = - \Delta + V \quad (0 \le V(x) \le \bar V; \quad \bar V  : = \sup V)
\]
acting on periodic functions, that is, on the manifold $M = \RR^n/T \ZZ^n$.  
 It is
crucial to applications that the estimates be independent of the ``size" $T$
of the manifold $M$ as $T\to\infty$.   What makes them even more valuable is that
they are essentially universal, as we shall discuss later in this section.\footnote{Furthermore, in the body of the paper, we will treat operators with bounded measurable coefficients on Lipschitz and more general domains and on compact $C^1$ manifolds with and without boundary; see Sections \ref{sec:prelim} and  \ref{sec:manifolds}.}

Assume that $V$ is positive on a set of positive measure. Then the landscape function $u$, the solution to $Lu=1$ on $M$, exists and is unique.  Moreover, $u>0$ by the maximum principle. 
Our starting point is the conjugation of the operator $L$ by multiplication
by $u$:
\[
\tilde L g := \frac1u L(gu) = -\frac1{u^2} \div (u^2  \nabla g) + \frac1u g.
\]
The operator $\tilde L$ has a similar form to  $L$ but with the new
potential $1/u$ replacing $V$. 
Writing the quadratic form associated with the operator
$L$ in terms of $\tilde L$, we find the identity
(Lemma~\ref{lem:identity})
\begin{equation} \label{eq:introidentity}
\int_M [|\nabla f|^2 + Vf^2]\, dx 
= \int_M 
\left( u^2 |\nabla (f/u)|^2+ \frac1u \, f^2 \right)\, dx,
\end{equation}
which holds for all $f\in W^{1,2}(M)$. In particular, 
\begin{equation}\label{eq:formbound}
\int_M [|\nabla f|^2 + Vf^2]\, dx \ge \int_M (1/u) f^2 \, dx.
\end{equation}
Inequality \eqref{eq:formbound}  suggests that we can 
replace $V$ with a new {\em effective potential function} $1/u$.  In fact,
we will need the full identity  \eqref{eq:introidentity} to demonstrate
this.  The identity reflects a trade in  kinetic and potential energy, enabling
$1/u$ to capture effects of both the kinetic term $|\nabla f|^2$ and the potential term $Vf^2$ rather than only the potential energy.

An example of the localization we are trying to predict and control is shown in Figure
1, which 
depicts a Bernoulli potential $V$ on $\RR^2/T\ZZ^2$ with $T=80$ and constant values on unit squares, 
$V =0$ on white squares and $V=4$ on black squares.   The values were chosen
independently, with probability $30$\% for $V=4$ and $70$\% for $V=0$. 
At the right is the graph of fifth eigenfunction.   In spite of the fact that the zero
set of $V$ percolates everywhere, this eigenfunction and dozens of others
are highly localized.

\begin{figure}[htbp]   \label{fig:potential+spike}   
\begin{tabular}{cc}
\hspace{-40pt}
\includegraphics[height=2in]{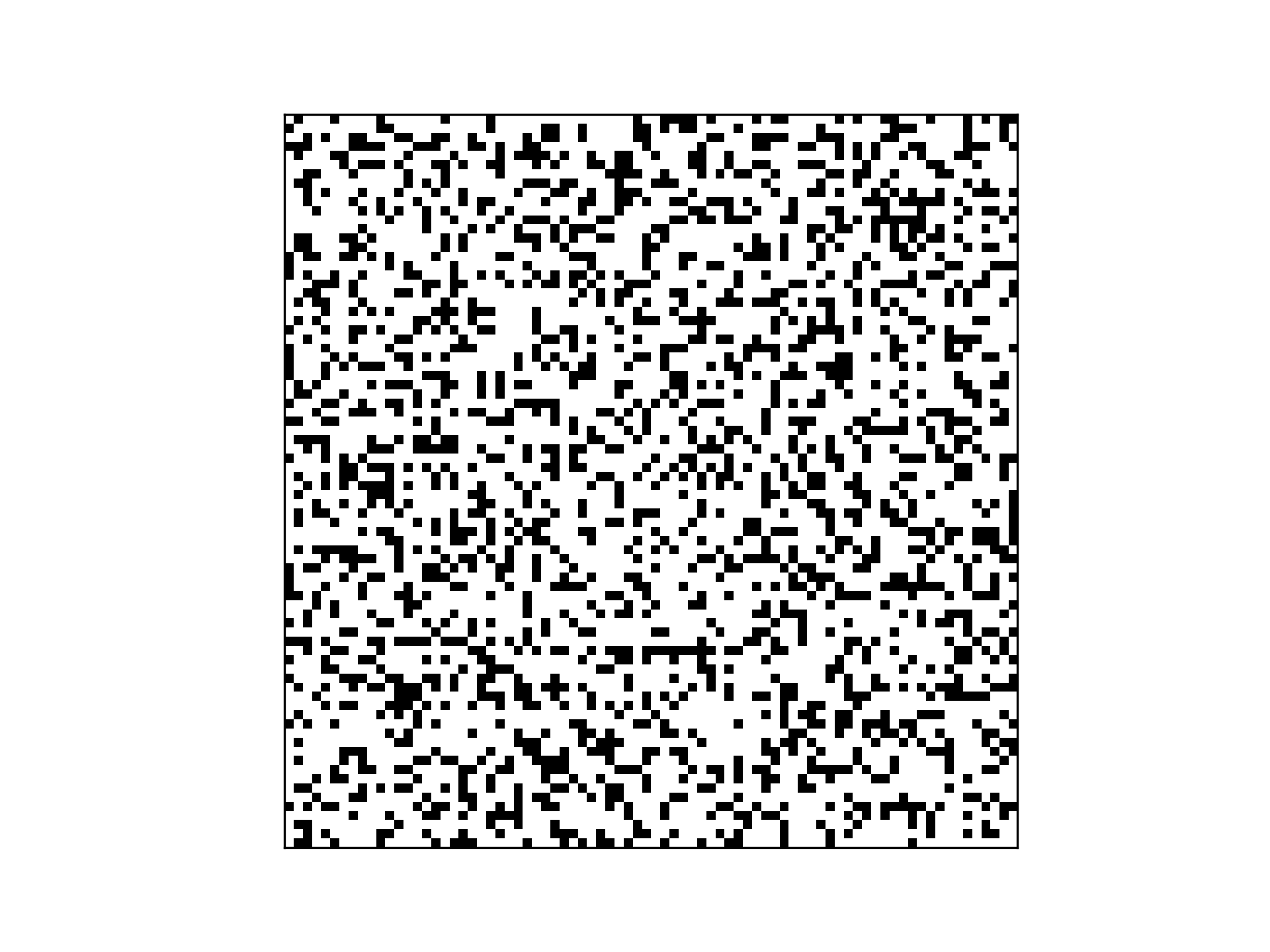} 
\hspace{-50pt}
\includegraphics[height=2in]{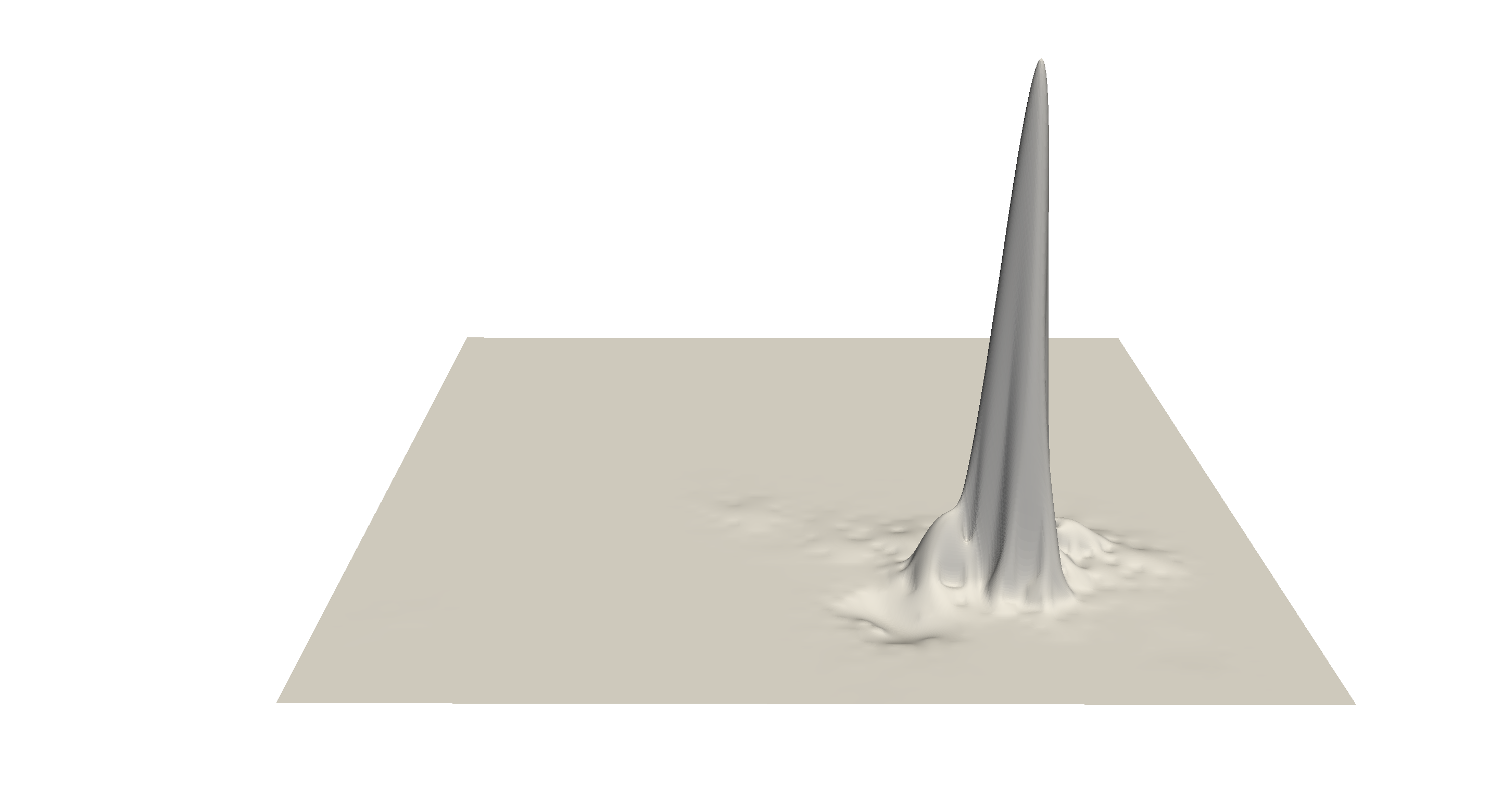} 
  \end{tabular}
     \caption{Bernoulli potential (left) and the fifth eigenfunction (right).}
\end{figure}

\begin{center}
{\bf Exponential decay}
\end{center}

The first main result of this paper is the rigorous proof that the steep
decay in Figure 1 
comes from the barriers of the effective potential.
We do this by formulating and proving appropriate exponential decay
estimates of  Agmon type (see \cite{AgmonBook,DFP}). 
Roughly speaking,
these theorems say that if \eqref{eq:formbound} holds, then eigenfunctions of eigenvalue $\lambda$ have ``most'' of their mass in the  region 
\[
E(\lambda + \delta) = \{x\in M: 1/u(x) \le \lambda + \delta\}
\]
for a suitable small $\delta>0$, and exponential decay in the complementary region.   

To formulate our estimate precisely,
consider the weights
\[
w_\lambda(x): = \max \left(\frac1{u(x)} - \lambda, \, 0\right). 
\]
Exponential decay is expressed in terms of the so-called Agmon distance, 
traditionally built from $V$, but for our purposes arising from $1/u$.  
We define our version of Agmon distance, which we will refer to loosely as the
{\em effective distance}, as the degenerate metric 
on $M$ given by
\[
\rho_\lambda(x,y) = 
\inf_{\gamma} \int_0^1 w_\lambda(\gamma(t))^{1/2}\, |\dot \gamma(t)|\, dt,
\]
with the infimum taken over absolutely continuous paths $\gamma: [0,1]\to M$
from $\gamma(0)=x$  to $\gamma(1) = y$.  

\begin{theorem}\label{thm:introexpdecay} (see Corollary~\ref{cor:expdecay}) 
Let $\psi$ be an eigenfunction: $L\psi = \lambda \psi$ on $M$. Let
\[
h(x) = \inf \{\rho_\lambda (x,y): y\in E(\lambda +\delta)\}
\]
be the effective distance from $x$ to $E(\lambda+ \delta)$. Then 
\begin{equation}\label{eq:introdecay}
\int_{\{h\ge 1\}} e^h (|\nabla \psi|^2 + \bar V \psi^2) \, dx 
\le 50 (\bar V/\delta)  \int_M \bar V \psi^2 \, dx.
\end{equation}
\end{theorem}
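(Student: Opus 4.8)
\emph{Proof idea.} The plan is to run Agmon's exponential-decay scheme, but with the ground-state identity \eqref{eq:introidentity} replacing the usual integration by parts against the potential $V$; this is exactly what lets the single function $1/u$ account for both the kinetic and the potential energy. I would first dispose of degenerate cases. Applying the maximum principle to $u-1/\bar V$, for which $L(u-1/\bar V)=1-V/\bar V\ge 0$, gives $u\ge 1/\bar V$, hence $1/u\le\bar V$ on all of $M$; moreover \eqref{eq:formbound} forces $\lambda\ge\inf(1/u)>0$, so $E(\lambda+\delta)\ne\emptyset$. Thus if $\lambda\ge\bar V$ then $w_\lambda\equiv 0$, and if $\delta\ge\bar V$ then $E(\lambda+\delta)=M$; in either case $h\equiv 0$, $\{h\ge1\}=\emptyset$, and \eqref{eq:introdecay} is trivial. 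So assume $0<\lambda<\bar V$ and $0<\delta<\bar V$. Since $u$ is continuous and bounded below, $w_\lambda$ is continuous with $0\le w_\lambda\le\bar V$, so $h$ is Lipschitz; from the definition of $\rho_\lambda$ and the continuity of $w_\lambda$ one obtains the eikonal bound $|\nabla h|^2\le w_\lambda$ a.e., and $h\equiv 0$ on $E(\lambda+\delta)=\{1/u\le\lambda+\delta\}$ since $h$ is the $\rho_\lambda$-distance to that set.

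The heart of the matter is to substitute $f=e^{h/2}\psi$ into \eqref{eq:introidentity}. A short computation, using the weak eigenvalue equation $L\psi=\lambda\psi$ tested against $e^{h}\psi\in W^{1,2}(M)$, shows that the left-hand side of \eqref{eq:introidentity} equals $\lambda\int_M e^{h}\psi^2+\tfrac14\int_M e^{h}\psi^2|\nabla h|^2$. Equating this with the right-hand side of \eqref{eq:introidentity} and discarding the nonnegative term $\int_M u^2|\nabla(f/u)|^2$ yields
\[
\int_M e^{h}\psi^2\Bigl(\frac1u-\lambda-\tfrac14|\nabla h|^2\Bigr)\,dx\le 0 .
\]
Now split $M$ into $\{1/u\le\lambda\}$, $\{\lambda<1/u\le\lambda+\delta\}$, and $\{1/u>\lambda+\delta\}$. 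On the first two sets $h\equiv 0$; on $\{1/u\le\lambda\}$ the integrand is $\ge-\lambda\ge-\bar V$, and on $\{\lambda<1/u\le\lambda+\delta\}$, using $|\nabla h|^2\le w_\lambda=1/u-\lambda$, it is $\ge\tfrac34(1/u-\lambda)\ge 0$, so this middle piece only helps. On $\{1/u>\lambda+\delta\}$ the same bound gives integrand $\ge\tfrac34(1/u-\lambda)>\tfrac34\delta$. Hence
\[
\tfrac34\,\delta\int_{\{1/u>\lambda+\delta\}} e^{h}\psi^2\,dx\le\bar V\int_{\{1/u\le\lambda\}}\psi^2\,dx\le\int_M\bar V\psi^2\,dx ,
\]
and since $h\equiv 0$ on $E(\lambda+\delta)$ we have $\{h\ge1\}\subseteq\{1/u>\lambda+\delta\}$, so $\int_{\{h\ge1\}}e^{h}\psi^2\le\tfrac{4}{3\delta}\int_M\bar V\psi^2$; multiplying by $\bar V$ handles the $\bar V\psi^2$ part of \eqref{eq:introdecay}.

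For the gradient part, the same computation also records the identity $\int_M|\nabla(e^{h/2}\psi)|^2+\int_M Ve^{h}\psi^2=\lambda\int_M e^{h}\psi^2+\tfrac14\int_M e^{h}\psi^2|\nabla h|^2$. Since $V\ge 0$ and $e^{h}|\nabla\psi|^2\le 2|\nabla(e^{h/2}\psi)|^2+\tfrac12 e^{h}\psi^2|\nabla h|^2$, combined with $|\nabla h|^2\le\bar V$ and $\lambda\le\bar V$, this gives $\int_M e^{h}|\nabla\psi|^2\le 3\bar V\int_M e^{h}\psi^2$. Using the bound on $\{h\ge1\}$ obtained above together with $e^{h}<e$ on $\{h<1\}$ and $\delta<\bar V$ gives $\int_M e^{h}\psi^2\le\tfrac{5}{\delta}\int_M\bar V\psi^2$, hence $\int_{\{h\ge1\}}e^{h}|\nabla\psi|^2\le\tfrac{15\bar V}{\delta}\int_M\bar V\psi^2$; adding the two pieces proves \eqref{eq:introdecay} with a constant well below $50$. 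The only delicate point in this plan is the regularity of the effective distance $h$, i.e.\ the eikonal inequality $|\nabla h|\le w_\lambda^{1/2}$: here it is immediate from the continuity of $u$, but in the bounded-measurable-coefficient setting treated in the body of the paper it is precisely where one must first approximate $A$ by continuous coefficient matrices, as noted in the introduction. Everything else is routine Agmon-type bookkeeping.
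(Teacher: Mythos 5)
Your proof is correct, and it takes a genuinely different and more streamlined route than the paper's. Both arguments pivot on the conjugation identity \eqref{eq:introidentity} (Lemma~\ref{lem:identity}) and the eikonal bound $|\nabla h|^2\le w_\lambda$ (Lemma~\ref{lem:Agmondist}), and your preliminary reductions ($u\ge 1/\bar V$, $\lambda\ge\inf(1/u)>0$, triviality when $\lambda\ge\bar V$ or $\delta\ge\bar V$) are exactly right. Where you diverge: the paper (Theorem~\ref{thm:expdecay1}, Corollary~\ref{cor:expdecay}) introduces a piecewise-linear cutoff $\chi=\min(h,1)$ and a rate parameter $\alpha$, substitutes $g=\chi e^{\alpha h}$ into Lemma~\ref{lem:eigenidentity} so that $\chi\vp$ vanishes on the wells $E(\mu)$ and the weight $(1/u-\mu-\alpha^2|\nabla_A h|^2)$ stays nonnegative on the support, and then passes from control of $u^2|\nabla_A(f/u)|^2$ to control of $|\nabla_A\vp|^2$ via intermediate estimates on $\int|\nabla_A u|^2(f/u)^2$ and $\int|\nabla_A f|^2$ (equations \eqref{3.13a}--\eqref{3.15a}). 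You instead plug $f=e^{h/2}\psi$ directly into \eqref{eq:introidentity} with no cutoff, identify its left side as $\lambda\int e^h\psi^2+\tfrac14\int e^h\psi^2|\nabla h|^2$ by testing $L\psi=\lambda\psi$ against $e^h\psi$, absorb the negative contribution from the well $\{1/u\le\lambda\}$ (where $h=0$, $\nabla h=0$ a.e.) by the crude but sufficient lower bound $1/u-\lambda\ge-\bar V$, and recover the gradient part from the same tested identity and $|a-b|^2\le 2|a|^2+2|b|^2$, entirely sidestepping the manipulations involving $\nabla u$. This gives a better constant (roughly $17\bar V/\delta$ rather than $18e\bar V/\delta\approx49\bar V/\delta$). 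What the paper's setup buys is the sharper local form \eqref{eq:expdecay1} (error localized to the transition shell $\{0<h<1\}$), the explicit $\alpha$-dependence near $\alpha\to1$, and a framework that carries over verbatim to the mixed-boundary case $K\ne\emptyset$ needed in Section~\ref{sec:diagonalization}; what yours buys is a shorter, cleaner proof of the corollary and, as you note, the same caveat about requiring continuous $A$ for the Agmon metric (not an issue in the $-\Delta+V$ special case).
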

The theorem says that the square density and energy of the eigenfunction are at most of size $e^{-h}$,
with $h = h_{\lambda,\delta}$ the effective distance from $E(\lambda + \delta)$.
The main difficulty of the proof is to compensate for the price we
paid for replacing $V$ with $1/u$, namely that the gradient term
$|\nabla f|^2$ has been replaced by $u^2 |\nabla(f/u)|^2$ in \eqref{eq:introidentity}.  
We  can't afford this dependence on $u$, and a crucial feature of the 
estimate we obtain in \eqref{eq:introdecay} is that this part of the dependence on $u$ disappears, leaving
only the effects of $1/u$.

Remarkably, we get a uniform bound, independent of the dimension $n$ 
and the  size $T$ of the manifold.  It is universal in that it depends only on 
the effective distance and 
the scale-invariant ratio  $\delta/\bar V$, where $\delta$ is 
a spectral gap.    As such, it can be interpreted easily both numerically and physically
across a wide family of contexts.

\begin{figure}[htbp] \label{fig:effective+eigen5}  
\begin{tabular}{cc}
\hspace{-40pt}
\includegraphics[height=2in]{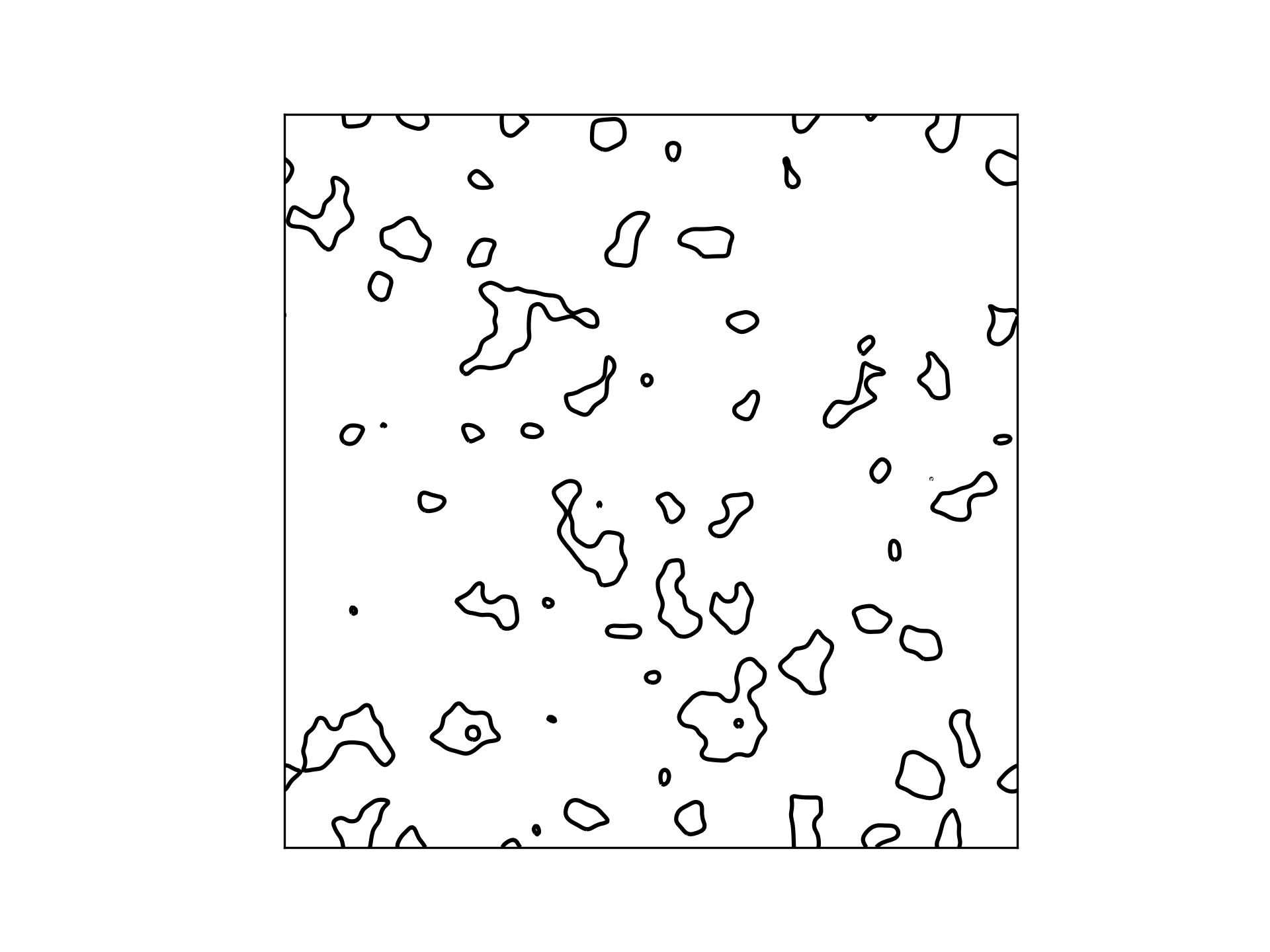} &
\hspace{-40pt}
\includegraphics[height=2in]{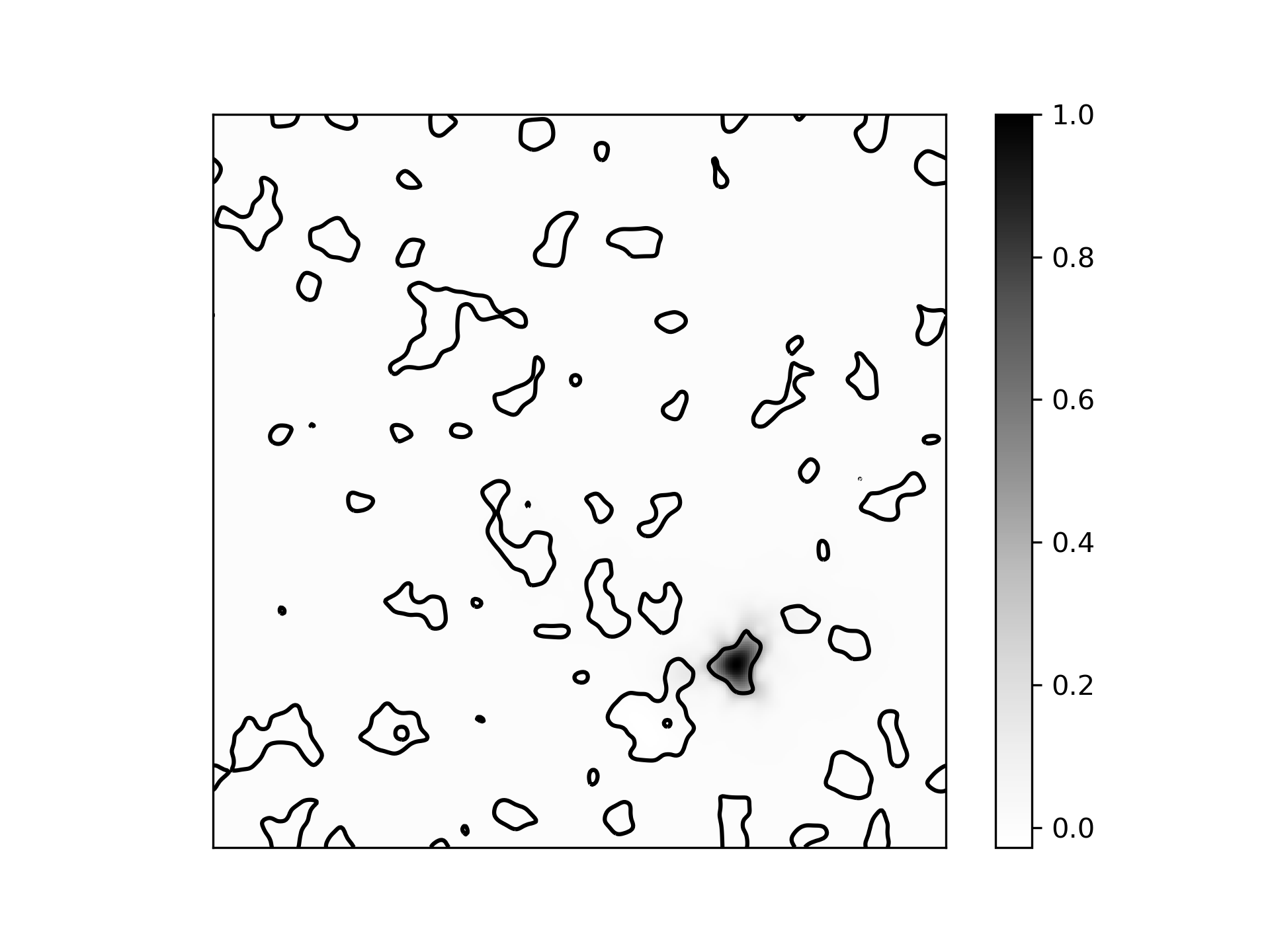} 
  \end{tabular}
     \caption{$E(\lambda_5+\delta) = \{1/u(x) \le \lambda_5  + \delta\}$ (left)
  with fifth eigenfunction superimposed in grey scale (right).}
\end{figure}

To illustrate this exponential decay, we compute the effective potential
$1/u(x)$ for the Bernoulli potential in Figure 1.
Figure 2 
shows the contour of $E( \lambda_5 + \delta)$ on the left with $\lambda_5 = 0.45508$, the fifth eigenvalue.
(The value $\delta = 0.005$  was chosen as the average spacing between eigenvalues 
in the vicinity of the fifth.)
Overlaid on the right in grey scale are the values of fifth eigenfunction $\psi_5$.
Note that  most of $\psi_5$ occupies just one component of the set 
$E(\lambda_5 + \delta)$.   In fact, dozens of eigenfunctions 
coincide essentially with single components or clusters of components.
\bigskip

\begin{center}
\bf Approximate diagonalization
\end{center}

So far, estimate \eqref{eq:introdecay} only guarantees that $\psi_5$ is
supported primarily in a union of wells, i.~e., it is mostly a linear combination of highly
localized functions,
whereas Figures 1 and 2 show that the eigenfunction is primarily a single spike.   
We want to show that eigenfunctions are single spikes or clusters of 
spikes and  justify implicitly
the numerical procedure for finding the eigenfunctions
in order by examining the wells separately, starting from the deepest (see \cite{SP}).

To prove that eigenfunctions localize to a single well or a cluster, we establish
an approximate diagonalization.    This will require an extra assumption
on spectral gaps.   For the purposes of localization and diagonalization, near
multiplicity, or  resonance, is the enemy.  Eigenfunctions with
nearly the same eigenvalue can, in fact, share wells.  

 We introduce a space of
localized eigenfunctions as follows.
Consider a threshold $\bar \mu$ that will be used
to handle eigenvalues $\lambda \le \bar \mu -\delta$.  
Choose any  subdivision\footnote{The $E_\ell$ are typically connected components of $E$, but since the theorem
is stronger when the minimum separation $\bar S$ is larger, it is sometimes useful to merge nearby 
wells into one set $E_\ell$.}   of $E = E(\bar \mu + \delta)$ into a finite collection of disjoint closed subsets
\[
E
= \bigcup_\ell E_\ell\, .
\]
Let $\bar S$ denote the smallest effective distance  $\rho_{\bar \mu}$ between distinct pairs of sets $E_\ell$ and $E_{\ell'}$.   Let  $\Omega_\ell$ be the $\bar S/2$ neighborhood\footnote{The sets $\Omega_\ell$
can also be chosen to be somewhat larger, provided each $\Omega_\ell$ is separated by at  least $\rho_{\bar\mu}$ distance $\bar S/2$ from $E_{\ell'}$ for every $\ell'\neq \ell$. They are roughly in the spirit of Voronoi cells.}
 of $E_\ell$ in the effective distance $\rho_{\bar \mu}$.
Let $\vp_{\ell,j}$, $j=1, \dots,$ be the orthonormal basis of $L^2(\Omega_\ell)$ of eigenfunctions of $L$ satisfying the Dirichlet condition $\vp = 0$ on $M \sm \Omega_\ell$. By results analogous to the exponential bounds for $\psi$, these functions $\vp_{\ell,j}$ 
are concentrated near $E_\ell$ and decay exponentially in the larger region $\Omega_\ell$, provided the corresponding eigenvalue satisfies
$\mu_{\ell,j} \le \bar \mu$.  In other words, such functions are {\em localized to
a single well or cluster $E_\ell$}  in 
$M$.

Denote by $\Phi_{(a,b)}$ the orthogonal projection onto the subspace of $L^2(M)$ spanned by $\vp_{\ell,j}$ with eigenvalues between $a$ and $b$, and $\Psi_{(a,b)}$ the corresponding spectral
projection for eigenfunctions of $L$.  Denote by $\|\cdot \|$ the norm of $L^2(M)$.
Our main result is the following.
\begin{theorem}\label{thm:introspectralprojection} (see Theorem~\ref{thm:spectralprojection})
If  $\psi$ is an eigenfunction of $L$ with eigenvalue $\lambda$ on $M$ and 
$\lambda\le \bar \mu -  \delta$, then
\begin{equation}\label{eq:introdiag}
\|\psi - \Phi_{(\lambda-\delta,\lambda+ \delta)} \psi  \|^2 
\le 300 \left(\frac{\bar V}{\delta}\right)^3 e^{-\bar S/2} \|\psi\|^2  .
\end{equation}
If $\vp = \vp_{\ell,j}$ is a localized eigenfunction with eigenvalue $\mu = \mu_{\ell,j} \le \bar \mu - \delta$, 
then 
\[
\| \vp - \Psi_{(\mu-\delta, \mu + \delta)}\vp \|^2 \le 300 \left(\frac{\bar V}{\delta}\right)^3 e^{-\bar S/2} \|\vp\|^2.
\]
\end{theorem}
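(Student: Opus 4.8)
The plan is to prove both inequalities by the same mechanism, run in two directions: an eigenfunction (of $L$ on $M$, or of a single well $\Omega_\ell$) with eigenvalue $\nu\le\bar\mu-\delta$ is, up to an error of the advertised size, a finite linear combination of eigenfunctions of the \emph{other} operator with eigenvalues in $(\nu-\delta,\nu+\delta)$. The two inputs are the Agmon estimate of Theorem~\ref{thm:introexpdecay} (and its analogue for Dirichlet eigenfunctions on the $\Omega_\ell$, Corollary~\ref{cor:expdecay}), which kills everything living away from the wells $E_\ell$, and the elementary spectral inequality ``$\|(A-\nu)v\|\le\varepsilon\|v\|$ implies $\|v-P_{(\nu-\delta,\nu+\delta)}v\|\le(\varepsilon/\delta)\|v\|$'', which turns an approximate eigenfunction into a function close to the matching spectral subspace and is the source of the $\delta^{-1}$ factors. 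First I would dispose of small $\bar S$: if the right side of \eqref{eq:introdiag} is $\ge1$ there is nothing to prove, so one may assume $\bar S$ is large (in particular $\bar S\ge4$, so the transition regions below fall where Theorem~\ref{thm:introexpdecay} is effective); one may likewise assume $\delta\le\bar V$. Since $\rho_{\bar\mu}(E_\ell,E_{\ell'})\ge\bar S$ for $\ell\ne\ell'$, the open $\bar S/2$-neighborhoods $\Omega_\ell$ are pairwise disjoint, so the $\vp_{\ell,j}$ (extended by $0$) form an orthonormal system in $L^2(M)$, $\Phi_{(a,b)}$ is the orthogonal projection onto the span of the $\vp_{\ell,j}$ with $\mu_{\ell,j}\in(a,b)$, and $\|\sum_\ell f_\ell\|^2=\sum_\ell\|f_\ell\|^2$ whenever $f_\ell$ is supported in $\Omega_\ell$. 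Moreover $E\cap\Omega_\ell=E_\ell$ (so the classically allowed set of any eigenfunction of $\Omega_\ell$ with eigenvalue $\le\bar\mu$ lies in $E_\ell$), $\lambda\le\bar\mu$ gives $E(\lambda+\delta)\subseteq E$, and $w_\nu\ge w_{\bar\mu}$, $\rho_\nu\ge\rho_{\bar\mu}$ for $\nu\le\bar\mu$. For each $\ell$ I would fix a cutoff $\chi_\ell$ adapted to a mollified version of the effective metric, equal to $1$ on $\{\rho_{\bar\mu}(\cdot,E_\ell)\le\bar S/2-1\}$ and supported strictly inside $\Omega_\ell$, so that $\chi_\ell\psi\in W^{1,2}_0(\Omega_\ell)$, $|\nabla\chi_\ell|^2\lesssim w_{\bar\mu}\le 1/u\le\bar V$, and (the companion bound) $|\nabla^2\chi_\ell|\lesssim\bar V$, everything supported on a transition shell $T_\ell$ at effective distance $\ge\bar S/2-1$ from $E_\ell$; the $T_\ell$ are pairwise disjoint and every $x\in T_\ell$ has $\rho_\lambda(x,E(\lambda+\delta))\ge\rho_{\bar\mu}(x,E(\lambda+\delta))\ge\min_{\ell'}\rho_{\bar\mu}(x,E_{\ell'})\ge\bar S/2-1$.

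For the first inequality, normalize $\|\psi\|=1$ and write $\psi=\chi_0^2\psi+\sum_\ell\chi_\ell^2\psi$ with $\chi_0^2:=1-\sum_\ell\chi_\ell^2$. Since $\chi_0^2$ is supported where $\rho_\lambda(\cdot,E(\lambda+\delta))\ge\bar S/2-1$, Theorem~\ref{thm:introexpdecay} gives $\|\chi_0^2\psi\|^2\lesssim(\bar V/\delta)e^{-\bar S/2}$, and, the $T_\ell$ being disjoint subsets of $\{h\ge\bar S/2-1\}$, also $\sum_\ell\int_{T_\ell}(|\nabla\psi|^2+\bar V\psi^2)\lesssim(\bar V^2/\delta)e^{-\bar S/2}$. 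Next, $(L-\lambda)(\chi_\ell\psi)=[L,\chi_\ell]\psi$ is supported in $T_\ell$, and the bounds on $\nabla\chi_\ell,\nabla^2\chi_\ell$ give $\|[L,\chi_\ell]\psi\|^2\lesssim\bar V\int_{T_\ell}(|\nabla\psi|^2+\bar V\psi^2)$, so $\sum_\ell\|[L,\chi_\ell]\psi\|^2\lesssim(\bar V^3/\delta)e^{-\bar S/2}$. Expanding $\chi_\ell\psi=\sum_j c_{\ell,j}\vp_{\ell,j}$ and using $(\mu_{\ell,j}-\lambda)c_{\ell,j}=\langle[L,\chi_\ell]\psi,\vp_{\ell,j}\rangle$ together with Bessel, the non-resonant mass obeys $\sum_{|\mu_{\ell,j}-\lambda|\ge\delta}c_{\ell,j}^2\le\delta^{-2}\|[L,\chi_\ell]\psi\|^2$, so after summing the total non-resonant mass is $\lesssim(\bar V/\delta)^3e^{-\bar S/2}$. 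Put $g_\ell:=\sum_{|\mu_{\ell,j}-\lambda|<\delta}c_{\ell,j}\vp_{\ell,j}$, so $\sum_\ell g_\ell$ lies in the range of $\Phi_{(\lambda-\delta,\lambda+\delta)}$; then
\[
\psi-\sum_\ell g_\ell=\chi_0^2\psi-\sum_\ell(1-\chi_\ell)g_\ell+\sum_\ell\chi_\ell\Big(\sum_{|\mu_{\ell,j}-\lambda|\ge\delta}c_{\ell,j}\vp_{\ell,j}\Big).
\]
Writing $(1-\chi_\ell)g_\ell=(1-\chi_\ell)\chi_\ell\psi-(1-\chi_\ell)\sum_{|\mu_{\ell,j}-\lambda|\ge\delta}c_{\ell,j}\vp_{\ell,j}$, with $|(1-\chi_\ell)\chi_\ell|\le\tfrac14$ supported in $T_\ell$, and using disjointness of the $\Omega_\ell$ and $T_\ell$ to make the $\ell$-sums orthogonal, each of the three terms has squared norm $\lesssim(\bar V/\delta)^3e^{-\bar S/2}$. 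Since $\Phi_{(\lambda-\delta,\lambda+\delta)}\psi$ is the nearest point of that range to $\psi$, this gives \eqref{eq:introdiag} with an explicit constant.

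The second inequality is the mirror image. Let $\vp=\vp_{\ell_0,j_0}$, $\|\vp\|=1$, $\mu=\mu_{\ell_0,j_0}\le\bar\mu-\delta$, and take $\chi:=\chi_{\ell_0}$. By Corollary~\ref{cor:expdecay} applied in $\Omega_{\ell_0}$ (allowed set $\subseteq E_{\ell_0}$, weight $w_\mu\ge w_{\bar\mu}$), $\vp$ is concentrated near $E_{\ell_0}$, so $\|(1-\chi)\vp\|^2\lesssim(\bar V/\delta)e^{-\bar S/2}$ and $\int_{T_{\ell_0}}(|\nabla\vp|^2+\bar V\vp^2)\lesssim(\bar V^2/\delta)e^{-\bar S/2}$; also $\chi\vp\in W^{1,2}(M)$ (compactly supported inside $\Omega_{\ell_0}$) satisfies $(L-\mu)(\chi\vp)=[L,\chi]\vp$, supported in $T_{\ell_0}$, with $\|[L,\chi]\vp\|^2\lesssim(\bar V^3/\delta)e^{-\bar S/2}$. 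Expanding $\chi\vp$ in the $L$-eigenbasis $\{\psi_k\}$ of $L^2(M)$ and using $\langle[L,\chi]\vp,\psi_k\rangle=(\lambda_k-\mu)\langle\chi\vp,\psi_k\rangle$ with $|\lambda_k-\mu|\ge\delta$ off the window, the spectral inequality gives $\|\chi\vp-\Psi_{(\mu-\delta,\mu+\delta)}(\chi\vp)\|^2\le\delta^{-2}\|[L,\chi]\vp\|^2\lesssim(\bar V/\delta)^3e^{-\bar S/2}$. Combining this with $\|(1-\chi)\vp\|$ and $\|\Psi_{(\mu-\delta,\mu+\delta)}\|\le1$ yields $\|\vp-\Psi_{(\mu-\delta,\mu+\delta)}\vp\|^2\lesssim(\bar V/\delta)^3e^{-\bar S/2}$, as desired.

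The main obstacle is the commutator estimate $\|[L,\chi_\ell]\psi\|^2\lesssim\bar V\int_{T_\ell}(|\nabla\psi|^2+\bar V\psi^2)$ and its $\vp$-analogue. For bounded measurable $A$ the eigenfunctions need not be $W^{2,2}$ and the effective metric need not be smooth, so $\chi_\ell$ cannot be differentiated twice at face value; one must first replace the effective metric by a comparable mollified one, with constants independent of the modulus of continuity of $A$ — exactly in the spirit of the approximation carried out in Section~\ref{sec:manifolds} — and invoke the landscape gradient control underlying $w_{\bar\mu}\le 1/u\le\bar V$. Everything else is bookkeeping: checking that each error genuinely carries the full factor $e^{-\bar S/2}$ (which is precisely why the shells $T_\ell$ must be pushed out to effective distance $\approx\bar S/2$, not $\bar S/4$, from the wells) and that all constants are dimension- and $T$-independent, both of which follow from the disjointness of the $\Omega_\ell$ and the scale-free form of Theorem~\ref{thm:introexpdecay}.
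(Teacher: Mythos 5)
Your decomposition, the cutoffs adapted to $\bar\rho$, the localization of the errors to transition shells of effective distance $\approx\bar S/2$, and the use of the exponential decay to kill those errors all match the paper's proof in spirit. But the step you yourself flag as ``the main obstacle''---the $L^2$ commutator bound $\|[L,\chi_\ell]\psi\|^2\lesssim\bar V\int_{T_\ell}(|\nabla\psi|^2+\bar V\psi^2)$---is a genuine gap, and the repair you propose does not close it. Writing the commutator out,
\[
[L,\chi]\psi=-2(A\nabla\chi)\cdot\nabla\psi-\frac{\psi}{m}\,\div(mA\nabla\chi),
\]
the second term carries a derivative of $A$, not just of $\chi$. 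Mollifying the effective metric makes $\chi$ as smooth as you like but leaves $\div(mA\nabla\chi)$ exactly as singular as $A$ itself; and if instead you mollify $A$ to $A^\eps$ as in Section~\ref{sec:manifolds}, then $\|\div(m A^\eps\nabla\chi)\|_\infty$ blows up as $\eps\to0$ whenever $A$ is discontinuous, so the constant in your commutator bound cannot be made independent of the modulus of continuity of $A$. Even for smooth $A$, that constant would depend on $\|\nabla A\|_\infty$ and on the ellipticity constant, which contradicts the universality that the theorem advertises. Consequently the ``elementary spectral inequality'' you invoke, which needs $(L-\nu)(\chi\psi)$ to lie in $L^2$, is not available.

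The paper sidesteps this by never forming $[L,\chi]\psi$ in $L^2$. It defines $r$ by $L(\eta\vp)=\mu\eta\vp+r$ as a functional on $W^{1,2}(M)$ and bounds it only in the dual norm \eqref{eq:error.r}, $|r(\zeta)|^2\le\eps\bar V[\|\nabla_A\zeta\|^2+\bar V\|\zeta\|^2]$, which requires just one derivative of $\eta$ (controlled by the Agmon weight, $|\nabla_A\eta|^2\le\bar w\le\bar V$) and none of $A$. The spectral inequality then has to be run in dual form: testing $r$ against $\zeta=\sum_{j\in J}\gamma_j\psi_j$, using $r(\zeta)=\sum\gamma_j(\lambda_j-\mu)\beta_j$ with $\beta_j=\langle\eta\vp,\psi_j\rangle$, and optimizing $\gamma_j=\beta_j(\lambda_j+\bar V)^{-1/2}\operatorname{sgn}(\lambda_j-\mu)$, one gets $\sum_{|\lambda_j-\mu|\ge\delta}\beta_j^2\le\eps\bar V^2/\delta^2$ directly, because the weight $|\lambda_j-\mu|/\sqrt{\lambda_j+\bar V}\ge\delta/\sqrt{\bar V}$. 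If you replace your $L^2$-commutator step and its corollary by this $(W^{1,2})^*$ estimate and duality argument, the rest of your plan---exponential decay on the shells, disjointness of the $\Omega_\ell$, treatment of the $(1-\eta)$ piece---does go through as in the paper.
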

The interpretation is that the eigenfunctions $\psi$ are linear combinations of
localized $\vp_{\ell,j}$ associated with the spectral band  $\lambda  \pm \delta$.
In particular, if the projection has rank one, then $\psi$ lives primarily in
one well or cluster $E_\ell$.
This is the kind of localization we see in numerical simulation. 

Let us make the spectral gap condition required for the projection to
have rank one more explicit.     If we choose $\delta$ so that 
\begin{equation}\label{eq:spectralgap}
\delta/\bar V >> e^{-\bar S/6} ,
\end{equation}
then the constant on the right side of \eqref{eq:introdiag} is much smaller than $1$.
If there is only one eigenvalue $\mu_{\ell,j}$ 
in the range $(\lambda-\delta, \lambda+\delta)$,  then the projection has rank one, 
and the eigenfunction $\psi$ is localized.  
Up to the factor $1/6$ in the exponent, this is the best result of its kind that one can hope for. 
If the spectral gap $\delta$ between eigenvalues $\mu_{\ell,j}$ in adjacent $E_\ell$
is smaller
than $\bar V e^{-c\bar S}$ for some sufficiently large $c$, then the eigenfunction may be a linear combination with significant 
contributions from more than one $E_\ell$.

Finally, we describe the correspondence between actual eigenvalues and
localized eigenvalues $\mu_{\ell,j}$ up to exponential errors.  This, combined with 
Theorem \ref{thm:introspectralprojection}, gives the full picture of the correspondence between 
actual eigenfunctions and localized eigenfunctions $\vp_{\ell,j}$ up to exponential errors
for low eigenvalues.  Denote by $N_0(\lambda)$ the cumulative eigenvalue counting function for the union of the 
$\vp_{\ell,j}$ and by $N(\lambda)$ the counting function for the original operator $L$.

\begin{corollary} \label{cor:introeigenbound} (see Corollary~\ref{C4.3})  
Suppose that $\delta$, $\bar \mu$ and $\bar N$ are chosen so that 
\begin{equation}\label{eq:introeigenbound}
\lambda_1 \le \lambda_2 \le \cdots \le \lambda_{\bar N} \le \bar \mu -\delta, \quad  300 \bar N \left(\frac{\bar V}{\delta}\right)^3 e^{-\bar S/2} < 1. 
\end{equation}
Then 
\begin{equation}\label{eq:introeigencount}
N_0(\lambda -\delta) \le N(\lambda) \le N_0(\lambda + \delta), \quad \mbox{for} \  \lambda \le\bar \mu -\delta.
\end{equation}
\end{corollary}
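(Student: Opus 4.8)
The plan is to deduce both counting inequalities from the approximate-diagonalization bound of Theorem~\ref{thm:introspectralprojection} by a standard dimension-counting (min--max type) argument. Write $\eps^2 := 300(\bar V/\delta)^3 e^{-\bar S/2}$, so that hypothesis \eqref{eq:introeigenbound} reads $\bar N \eps^2 < 1$. The only linear-algebra input is the elementary fact: \emph{if $v_1,\dots,v_k$ are orthonormal in a Hilbert space $H$ and $W\subset H$ is a closed subspace with $\dist(v_i,W)\le \eps$ for each $i$, and if $k\eps^2<1$, then $\dim W\ge k$.} Indeed, with $P$ the orthogonal projection onto $W$ one has $\|Pv_i\|^2 = 1-\dist(v_i,W)^2\ge 1-\eps^2$, so $\sum_i\|Pv_i\|^2\ge k(1-\eps^2)>k-1$; on the other hand $\sum_i\|Pv_i\|^2 = \tr(PQ)\le \dim W$, where $Q$ is the orthogonal projection onto $\operatorname{span}\{v_i\}$ (write $\tr(PQ)=\tr(PQP)$, a positive operator of rank at most $\dim W$ and norm at most one). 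Hence $\dim W>k-1$, i.e.\ $\dim W\ge k$.

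For the upper bound $N(\lambda)\le N_0(\lambda+\delta)$: let $\psi_1,\dots,\psi_{N(\lambda)}$ be an orthonormal family of eigenfunctions of $L$ with eigenvalues $\lambda_i\le\lambda\le\bar\mu-\delta$. The first part of Theorem~\ref{thm:introspectralprojection} (via \eqref{eq:introdiag}) gives $\|\psi_i-\Phi_{(\lambda_i-\delta,\lambda_i+\delta)}\psi_i\|\le\eps$, and $\Phi_{(\lambda_i-\delta,\lambda_i+\delta)}\psi_i$ lies in $W:=\operatorname{span}\{\vp_{\ell,j}:\mu_{\ell,j}<\lambda+\delta\}$ because $\lambda_i+\delta\le\lambda+\delta$. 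Thus each $\psi_i$ is within $\eps$ of $W$, and $\dim W\le N_0(\lambda+\delta)$; the lemma (with $k=N(\lambda)\le\bar N$) yields $N_0(\lambda+\delta)\ge N(\lambda)$. Symmetrically, for $N_0(\lambda-\delta)\le N(\lambda)$: the functions $\{\vp_{\ell,j}:\mu_{\ell,j}\le\lambda-\delta\}$ are orthonormal in $L^2(M)$ (the $\bar S/2$-neighborhoods $\Omega_\ell$ of the separated sets $E_\ell$ are essentially disjoint), and each has eigenvalue $\mu_{\ell,j}\le\bar\mu-2\delta$, so the second part of Theorem~\ref{thm:introspectralprojection} applies; each such $\vp_{\ell,j}$ is then within $\eps$ of $\operatorname{span}\{\psi_i:\lambda_i<\lambda\}$, a subspace of dimension at most $N(\lambda)$. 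The lemma gives $N(\lambda)\ge N_0(\lambda-\delta)$, and the two estimates together are \eqref{eq:introeigencount}.

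The main point requiring care is the bookkeeping that makes the lemma applicable, i.e.\ that the number $k$ of orthonormal vectors produced in each step is at most $\bar N$, so that $k\eps^2\le\bar N\eps^2<1$. For eigenvalues at the bottom of the spectrum this is immediate; in general it is forced by running the two inequalities in tandem, since for $\lambda\le\bar\mu-\delta$ the counts $N(\lambda)$ and $N_0(\lambda\pm\delta)$ are finite and bound one another, so an induction sweeping upward through the discrete spectrum keeps all the relevant counts inside the range controlled by the hypothesis $\lambda_{\bar N}\le\bar\mu-\delta$. A secondary, routine nuisance is the distinction between open and closed spectral bands (whether an eigenvalue landing exactly on $\lambda\pm\delta$ or on $\bar\mu+\delta$ is counted), which affects only strict versus non-strict inequalities and is absorbed by the slack $\delta$ built into $E=E(\bar\mu+\delta)$ and into the bands $(\lambda-\delta,\lambda+\delta)$. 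One should also record at the end that Theorem~\ref{thm:introspectralprojection} is genuinely applicable to every eigenfunction used, which is exactly the conditions $\lambda\le\bar\mu-\delta$ for the $\psi_i$ and $\mu_{\ell,j}\le\bar\mu-\delta$ for the $\vp_{\ell,j}$, both of which hold in the stated ranges.
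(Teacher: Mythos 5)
Your dimension-counting lemma via $\operatorname{tr}(PQP)\le\dim W$ is a valid and slightly slicker alternative to what the paper does in Corollary~\ref{C4.3}, where the same bound is obtained by showing $\Phi_{(0,\mu)}$ is injective on $Q=\operatorname{span}\{\psi_1,\dots,\psi_p\}$: for $\psi=\sum\alpha_j\psi_j$, the triangle inequality and Cauchy--Schwarz give $\|\psi-\Phi_{(0,\mu)}\psi\|\le\eps\sum|\alpha_j|\le\eps\,p^{1/2}\|\psi\|<\|\psi\|$, whence $\dim\Phi_{(0,\mu)}(Q)\ge p$. Both routes require exactly $k\eps^2<1$ and are conceptually the same linear-algebra step applied to the approximate diagonalization of Theorem~\ref{thm:introspectralprojection}; your trace lemma simply packages it a little more cleanly.

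The genuine problem is the bookkeeping paragraph, which does not actually close the gap you correctly identify. The hypothesis $\lambda_1\le\cdots\le\lambda_{\bar N}\le\bar\mu-\delta$ is a \emph{lower} bound on $N(\bar\mu-\delta)$ (it says at least $\bar N$ eigenvalues lie below the threshold), so it does not imply $N(\lambda)\le\bar N$ for $\lambda\le\bar\mu-\delta$, and a fortiori does not control $N_0(\lambda-\delta)$. Your appeal to ``running the two inequalities in tandem'' and ``an induction sweeping upward'' is circular: the fact that $N(\lambda)$ and $N_0(\lambda\pm\delta)$ bound one another \emph{is} the conclusion \eqref{eq:introeigencount}, so it cannot be invoked to keep the counts below $\bar N$ before the conclusion is established. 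Even under the natural reading $\bar N=N(\bar\mu-\delta)$, which makes $N(\lambda)\le\bar N$ automatic and so rescues the upper bound $N(\lambda)\le N_0(\lambda+\delta)$, there is no argument forcing $N_0(\lambda-\delta)\le\bar N$: if $N_0(\lambda-\delta)=\bar N+m$ for some $m\ge1$, applying your lemma to the first $\bar N$ localized eigenfunctions yields only $N(\lambda)\ge\bar N$, which is not a contradiction. This is precisely why the paper's rigorous statement, Corollary~\ref{C4.3}, guards the conclusion with a minimum: $\min(\bar N, N_0(\mu-\delta))\le N(\mu)$ and $\min(\bar N, N(\mu-\delta))\le N_0(\mu)$. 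You should either state your conclusion in that guarded form, or add the explicit hypothesis that $\bar N$ dominates both $N(\bar\mu-\delta)$ and $N_0(\bar\mu-2\delta)$, in which case your two applications of the lemma go through verbatim.
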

The corollary follows readily from Theorem \ref{thm:introspectralprojection}.   
It says that the two eigenvalue counts coincide up to $\delta$ with 
$\delta \approx \bar V \,\bar N^{1/3} e^{-\bar S/6}$, parallel to \eqref{eq:spectralgap}.

The constants in our estimates depend  only on the spectral ratio $\delta/\bar V$, so 
 we can easily see the exponential take control as $T$ increases with the help of 
numerical experiments on $\RR/T \ZZ$.  
For each of  $T = 2^5, \, 2^6,\,  \dots \,, 2^{19}$, we carried
out 200 realizations of a potential $V$ with constant values on unit intervals,
chosen independently and uniformly distributed  between $0$ and $\bar V = 4$. 
We found that the gap $\lambda_2-\lambda_1$ is typically\footnote{
Lower bounds on spectral gaps are called Wegner type estimates.
In \cite{FS},
Fr\"ohlich and Spencer showed that  for large disorder,
the gap is bounded below by a multiple of $1/T^n$ with high probability
in the discrete Anderson model on $\ZZ^n$ with uniformly distributed $V$.
A similar conclusion holds with a larger power of $T$ in many cases in which $V$ has a singular  continuous distribution (see \cite{CKM}).  
}
greater than $1/T$.  
(This is nearly the same, by \eqref{eq:introeigenbound},    as the spectral
gap between the first two  {\em localized} eigenvalues $\mu_{\ell, j}$.)
The minimum separation $S$ between consecutive connected components 
of $E(\lambda_1 + 1/T)$ conforms very well to the power law $\mbox{median}(S) \sim .69 \,T^{.59}$.  
For $T =2^{15}$, the values $\delta_1 = 1/T$, $\bar V = 4$,  and the median $S_1 = .69\, T^{.59}$, we have
\[
300 \left(\frac{\bar V}{\delta_1}\right)^3 e^{-S_1/2} << 10^{-50}.
\]
Thus, \eqref{eq:introdiag} typically shows that the ground state $\psi_1$ is extremely close to a single spike.

Theorem \ref{thm:introspectralprojection} is motivated by work of 
Helffer and  Sj\"ostrand \cite{H,HS} and Simon \cite{S1,S2}  on resonance
  for smooth potentials $V$  in the semi-classical regime,
 $-h^2 \Delta +V$ as $h\to 0$,  but our potentials are much more irregular, our
eigenfunctions have a different shape, and our methods are different.
We use weak eigenfunction equations and derive error estimates in the dual space to the standard Sobolev space $W^{1,2}(M)$ (see \eqref{eq:error.r}).  By relying only on dual space estimates,
we can eliminate all dependence on smoothness, and express our results
explicitly in terms of the spectral gap ratio $\delta/\bar V$.    The dual estimates are just barely strong enough  to yield estimates for the spectral projection and eigenvalue distribution.

Because our exponential decay result is relative to distance
to all of $E(\lambda + \delta)$ rather than to a single well, it does not address directly the further decay we
see numerically as we pass through the second and third effective barrier, etc.  
Our subsequent estimates show that resonance is the main issue.   The natural conjecture is that the interaction of pairs of eigenfunctions depends primarily on
the effective distance between the wells or cluster to which they belong,
rather than the minimum distance $\bar S$ between all pairs of wells.
The proof can be expected to depend on multi-scale analysis and a more detailed
spectral gap hypothesis like the condition \eqref{eq:spectralgap} above, localized
to pairs or groupings of wells.  Showing that such a hypothesis is satisfied
with high probability should employ tools associated with so-called Wegner
estimates in the theory of Anderson localization.

\section{Main Assumptions and Preliminary Estimates}\label{sec:prelim}

Let $\Omega$ be  a bounded, connected, open subset of $\RR^n$ such
that at each boundary point the domain is locally equivalent
to a half space via a bi-Lipschitz mapping. 
(In Section~5, we will replace the ambient space $\RR^n$ with a compact
$C^1$ manifold $\wh M$.)

Set $M = \bar\Omega$, and let $m\in L^\infty(\Omega)$ be a real-valued density satisfying uniform upper and lower bounds 
\[
\frac1C \le m(x) \le C,
\]
for some positive constant $C$. Let $A= (a_{ij}(x))_{i,j=1}^n$ be a bounded measurable, real symmetric matrix-valued function, satisfying the uniform ellipticity condition
\begin{equation}\label{eq:ellipticity}
\frac1C |\xi|^{2}\leq  
\sum_{i,j=1}^{n}a_{ij}(x)\xi_{i}\xi_{j}
\leq C |\xi|^{2}, \quad x\in \Omega, \quad \xi \in \RR^n.
\end{equation}
for some $C<\infty$. We define the elliptic operator $L$ acting formally on real-valued functions $\vp$ by
\begin{equation} \label{eq:Lformaldef}
L\vp =-\frac1m\div (mA\nabla \vp) +V\vp =-\frac 1m 
\sum_{i,j=1}^n  
\frac{\partial}{\partial
x_{i}}\left(m \, a_{ij} \,\frac{\partial \vp}{\partial x_{j}}\right)+V \vp.
\end{equation}

The operator $L$ will always be used in the weak sense, defined as follows.
\begin{definition}\label{def:weakLphi=f}
A function $\vp\in W^{1,2}(\Omega)$ satisfies $L\vp=f$ weakly on $\Omega$ (respectively, on $M= \bar \Omega$) if 
\begin{equation}\label{eq:weakLphi=f} 
\int_\Omega [(A \nabla \vp )\cdot \nabla \eta +
 V  \vp  \eta ] \,m\, dx=\int_\Omega f \eta\, m \,dx 
\end{equation}
for every $\eta\in W_0^{1,2}(\Omega)$ (respectively, for every $\eta \in W^{1,2}(\Omega)$).
\end{definition}
Here the space $W^{1,2}(\Omega) = W^{1,2}(M)$ is the usual Sobolev space, namely the closure of $C^1(M)$ in the function space with square norm given by 
\[
\int_\Omega (|\nabla \vp|^2 + \vp^2) \, dx.
\]
The space $W_0^{1,2}(\Omega)$ is the closure in the same norm of the subspace $C_0^1(\Omega)$ of continuously differentiable functions that are compactly supported in $\Omega$. 
 
The weak equation on $M= \bar \Omega$ imposes, in addition to the interior condition, a weak form of the Neumann boundary condition on $\vp$. If there is sufficient smoothness to justify integration by parts, then the Neumann condition can be written 
\[
\nu(x) \cdot A(x)\nabla \vp(x) = 0, \quad x\in \partial \Omega,
\]
with $\nu $ the normal to $\partial \Omega$. In fact, in the case of Lipschitz boundaries, the Neumann condition is valid almost everywhere with respect to surface measure on $\partial \Omega$ for suitable right hand sides $f$. But, we will only need the weak form, not this strong version of the boundary condition.  (For now we confine ourselves to
Neumann boundary conditions; we will say a few words about Dirichlet and mixed boundary conditions later.)

We assume further that $V$ is non-degenerate in the sense that it is strictly positive on a subset of positive measure of $\Omega$.  By ellipticity of $A$ and the fact that $\Omega$ is a connected, bounded bi-Lipschitz domain, we have the coercivity inequality 
\[
\int_M [(A \nabla \vp) \cdot \nabla \vp
+ V \vp^2] \, m \, dx 
\ge c \int_M (|\nabla \vp|^2 + \vp^2)\, dx,
\]
for some $c>0$. In other words, the formal $L^2(M,m\,dx)$ inner product $\langle L\vp,\vp\rangle$ is comparable to the square of the $W^{1,2}(\Omega) = W^{1,2}(M)$ norm of $\vp$. By the Fr\'echet--Riesz theorem (identifying a Hilbert space with its dual), this implies that for every $f \in L^2(M, m\,dx)$, there is a unique solution $v\in W^{1,2}(M)$ to the weak equation $L v = f$ on $M$. The {\em landscape function} $u$ is defined as the solution to 
\[
Lu =1 \quad \mbox{weakly on } \ M.
\]
In other words, $u$ is the unique weak solution to the inhomogeneous Neumann problem with right hand side the constant $1$.

\begin{proposition} \label{prop:ubounds}
Let $V$ be nondegenerate and satisfy $0 \le V \le \bar V$ for some constant $\bar V$.  Then the landscape function $u\ge 1/\bar V$ on $M$. Moreover $u\in C^\alpha(M)$ for some $\alpha>0$.
\end{proposition}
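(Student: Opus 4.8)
The plan is to establish the two assertions separately: the lower bound $u \ge 1/\bar V$ by a weak maximum (comparison) principle, and the Hölder regularity by De Giorgi--Nash--Moser theory combined with boundary regularity for the Neumann problem on a bi-Lipschitz domain.

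For the lower bound I would compare $u$ with the constant $c := 1/\bar V$. Since $\nabla c = 0$, for any nonnegative test function $\eta \in W^{1,2}(M)$ one has $\int_M[(A\nabla c)\cdot\nabla\eta + Vc\,\eta]\,m\,dx = \int_M Vc\,\eta\,m\,dx$, and subtracting this from \eqref{eq:weakLphi=f} applied to $u$ with the same $\eta$ gives
\[
\int_M [(A\nabla(u-c))\cdot\nabla\eta + V(u-c)\eta]\,m\,dx = \int_M (1 - V/\bar V)\,\eta\, m\,dx .
\]
I would then take $\eta := (u-c)^- = \max(c-u,0) \in W^{1,2}(M)$, which is admissible for the Neumann problem on $M = \bar\Omega$. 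Because $\nabla\eta = -\nabla(u-c)$ on $\{u<c\}$ and vanishes elsewhere, and $u-c = -\eta$ there, the left-hand side equals $-\int_M (A\nabla\eta)\cdot\nabla\eta\,m\,dx - \int_M V\eta^2 m\,dx$, while the right-hand side is $\ge 0$ since $V \le \bar V$. Ellipticity \eqref{eq:ellipticity} and the lower bound on $m$ then force $\int_M |\nabla\eta|^2\,dx = 0$ and $\int_M V\eta^2 m\,dx = 0$; as $\Omega$ is connected, $\eta$ is constant, and since $V > 0$ on a set of positive measure this constant is $0$. Hence $u \ge 1/\bar V$ a.e.\ on $M$.

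For the Hölder bound I would first note that coercivity gives the a priori bound $\|u\|_{W^{1,2}(M)} \le C$, and then rewrite the landscape equation in divergence form, $\div(mA\nabla u) - mVu = -m$, with $mA$ bounded measurable and uniformly elliptic, $mV \ge 0$ bounded, and bounded right-hand side. Moser iteration (treating $mVu$ as a bounded zeroth-order coefficient) upgrades the $W^{1,2}$ bound to $u \in L^\infty_{\loc}(\Omega)$, and the De Giorgi--Nash--Moser oscillation estimate then yields $u \in C^\alpha_{\loc}(\Omega)$ for some $\alpha > 0$. To reach $\po$, I would flatten the boundary locally using the given bi-Lipschitz chart; this preserves the class of bounded measurable uniformly elliptic coefficients and turns the weak Neumann condition into a weak co-normal (Neumann-type) condition on a flat half-ball, so an even reflection across the flat piece (with the appropriate odd reflection of the mixed coefficients) extends $u$ to a weak solution of a divergence-form equation in a full ball, reducing the boundary estimate to the interior case. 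Patching the interior and boundary local estimates with the global $W^{1,2}$ bound gives $u \in C^\alpha(M)$, whence the a.e.\ inequality $u \ge 1/\bar V$ holds everywhere.

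The comparison computation and the classical interior De Giorgi--Nash--Moser estimates are routine. \emph{The main obstacle is the boundary regularity}: one must carefully verify that the bi-Lipschitz flattening keeps the coefficients in the admissible measurable-elliptic class, that the flattened equation retains its divergence form with the Neumann condition becoming a genuine co-normal condition, and that the reflected equation is still covered by the scalar De Giorgi--Nash--Moser theory. This is exactly the boundary regularity statement that the paper postpones to Section~\ref{sec:manifolds}, where it is proved in the more general $C^1$-manifold setting.
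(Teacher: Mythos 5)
Your proof is correct. The Hölder-regularity half is essentially identical to the paper's: interior De Giorgi--Nash--Moser plus a boundary reflection after a bi-Lipschitz flattening, with the technical details deferred (in the paper, to Proposition~\ref{prop:boundaryHolder}). Where you diverge is in the lower bound $u\ge 1/\bar V$. You test the weak equation for $u-c$ directly against $\eta=(u-c)^-\in W^{1,2}(M)$ and conclude $\nabla\eta=0$ a.e.\ and $\int V\eta^2=0$, then use connectedness and nondegeneracy of $V$ to force $\eta\equiv 0$. The paper instead formulates a general weak maximum principle ($Lv=f$, $f\ge0\Rightarrow v\ge0$), proved by comparing the energy of the minimizer $v$ with that of $v_+$, which yields $\int_{\Omega^-}(A\nabla v)\cdot\nabla v\,m\,dx\le 0$ on the open set $\Omega^-=\{v<0\}$; this requires first establishing continuity of $v$ so that $\Omega^-$ is open, and then a topological argument using connected components. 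Your route is a standard alternative and is arguably leaner: it does not need continuity of $u$ to run (the test-function computation is valid for $W^{1,2}$ solutions as is), whereas the paper's proof does regularity first precisely so it can speak of the open set $\{v<0\}$. The one thing you gain from the paper's abstraction is a reusable maximum principle statement, but that is not needed elsewhere, so your more direct calculation is a perfectly good substitute.
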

\begin{proof}
Consider the weak solution to $Lv = f$ on $M$ for bounded measurable $f$.   H\"older regularity of $v$ at interior points of $M$ follows from a version of
the theorem of De Giorgi, Nash, and Moser (see Theorem 8.24, \cite{GT}).  Near each boundary point, one can define an ``even'' reflection of $v$ that satisfies a uniformly elliptic equation in a full neighborhood; hence $v$ is $C^\alpha$ up to the boundary for some $\alpha>0$. This reflection argument is presented in the last section in the more general context of manifolds (see Proposition~\ref{prop:boundaryHolder}). In particular, $u\in C^\alpha(M)$. 

Next, we prove a version of the maximum principle, namely that $v\ge 0$ provided $f\ge 0$. Since $v$ is continuous, the set $\Omega^- = \{x\in \Omega: v(x)<0\}$ is open. Since $v$ minimizes
\[
\int_\Omega \left( (A\nabla \vp)\cdot \nabla \vp + 
V \vp^2 - 2f\vp\right)
\, m\, dx
\]
among all $\vp\in W^{1,2}(M)$, we have
\begin{align*} 
\int_\Omega \left((A\nabla v)\cdot \nabla v + 
V v^2 - 2fv \right) &\, m\, dx 
\\& \hskip-3cm
\le \int_\Omega 
\left((A\nabla v_+)\cdot \nabla v_+ + 
V v_+^2 - 2fv_+\right)
\, m\, dx 
\end{align*}
for $v_+(x) = \max(v(x),0)$. Consequently, 
\[
\int_{\Omega^-} \left((A\nabla v)\cdot \nabla v + 
V v^2 - 2fv\right)
\, m\, dx \le 0.
\]
Because $V\ge0$ and $f\ge 0$, we have $Vv^2 - 2fv \ge 0$ on $\Omega^-$. Therefore,
\[
\int_{\Omega^-}(A\nabla v)\cdot \nabla v \ m\, dx \le 0.
\]
Since $A$ is coercive, $\nabla v = 0$ a.e. on $\Omega^-$, and $v$ is a strictly negative constant on each connected component of $\Omega^-$.  If any such component is a proper subset of $\Omega$, then the continuity of $v$ contradicts the fact that $v\ge 0$ on $\Omega\setminus \Omega^-$. On the other hand, if $\Omega^-  = \Omega$, then $v\equiv -a$, for some constant $a>0$. But in that case, $Lv = -aV$, which cannot equal $f\ge 0$. Thus, the only possibility is that $\Omega^-$ is empty.

Finally, to conclude proof of the proposition, consider $u$, the  weak solution to $Lu=1$ on $M$. Then
\[
v = u - \frac1{\bar V} \quad \mbox{solves} \quad L v = 1 - \frac{V}{\bar V} \ge 0.
\]
Therefore, by the maximum principle, $v\ge0$, and $u \ge 1/\bar V$. 
\end{proof}

By the bi-Lipschitz assumption on $\Omega$ and the Rellich-Kondrachov lemma, the inclusion mapping $W^{1,2}(M)\hookrightarrow L^2(M)$ is compact. Thus, by the spectral theorem for compact operators, there is a complete orthonormal system of eigenfunctions to the Neumann problem for $L$, that is, an orthonormal basis $\psi_j$ of $L^2(M)$ such that $\psi_j\in W^{1,2}(M)$, and 
\[
L \psi_j = \lambda_j \psi_j \quad \mbox{weakly on $M$}.
\]
The non-degeneracy of $V$ implies that the eigenvalues $\lambda_j$ are strictly positive,

We will compare these eigenfunctions to localized eigenfunctions of Dirichlet or mixed boundary value problems. Let $K$ be a compact subset of $M$.
Let $U$ be a connected component
of $M\setminus K$.  We say that $L\vp =f$ weakly on $U$ if equation \eqref{eq:weakLphi=f} holds for all test functions $\eta\in C^1(M)$ such that the support of $\eta$ is contained in $U$.  
We will denote the closure of this set of test functions in the usual $W^{1,2}(\RR^n)$ 
norm by  $W^{1,2}_0(U)$. Formally, solutions to $L\vp=f$ on $U$ 
satisfy mixed boundary conditions
\[
\vp (x) = 0, \quad x\in K\cap \partial U; 
\quad 
\nu(x) \cdot A(x)\nabla \vp(x) = 0, \quad x\in (\partial \Omega) \cap \partial U.
\]
In the special case $K\supset \partial \Omega$, the problem is no longer mixed because we only have Dirichlet boundary conditions.  We won't need the Neumann
boundary equations in strong form, only the weak, integrated form.  
On the other hand, we will use continuity of the solutions up to the boundary.  
In fact, we will obtain $C^\alpha$ regularity.

To ensure the H\"older regularity of solutions we make an additional assumption on 
the compact set $K\subset M$. 
We will say that $K$ satisfies the {\em bi-Lipschitz cone condition}
if there are $r>0$ and $\eps>0$ such that at every point $x_0\in \partial K$ 
there is a mapping $F: B_r (x_0) \to \RR^n$ with $F(x_0) = 0$,
bi-Lipschitz bounds $\eps |x-y| \le |F(x)-F(y)| \le (1/\eps) |x-y|$, and 
such that 
\[
F(K) \supset \{x= (x_1,x')\in \RR\times \RR^{n-1}:  |x'|  < \eps x_1 < \eps^2\}.
\]
The constants in our main theorems do not depend on $r$, $\eps$
or the bi-Lipschitz constants of $\Omega$ 
because  continuity of the solutions is only used in a qualitative way.

\begin{proposition} \label{prop:phi.j.Holder}
Suppose that $V$ is nondegenerate, $K$ is a compact subset of $M$
satisfying the bi-Lipschitz cone condition. 
Let $U$ be a connected component of $M\setminus K$.
Then there is an orthonormal basis $\vp_j$ of $L^2(U,  m\,dx)$ of eigenfunctions solving $L\vp_j = \mu_j \vp_j$ weakly on $U$, $\mu_j>0$. After extending the functions $\vp_j$
from 
$U$ to the rest of $M$ by $\vp_j= 0 $ on $M \setminus U$, 
they satisfy  $\vp_j\in C^\alpha(M)\cap W^{1,2}(M)$ for some $\alpha>0$.
\end{proposition}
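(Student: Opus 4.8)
The plan is to mimic, for the mixed boundary value problem on $U$, the construction of the Neumann spectral decomposition on $M$ carried out above, and then to upgrade the $L^2$ eigenfunctions to H\"older continuous ones by a De Giorgi--Nash--Moser argument: the interior and the Neumann part of $\partial U$ are handled exactly as in Propositions~\ref{prop:ubounds} and \ref{prop:boundaryHolder}, and the Dirichlet part of $\partial U$ is handled using the bi-Lipschitz cone condition on $K$.

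First I would set up the spectral decomposition. Since $U$ is a bounded open subset of $\RR^n$, the form $\eta\mapsto\int_U[(A\nabla\eta)\cdot\nabla\eta+V\eta^2]\,m\,dx$ is coercive on $W_0^{1,2}(U)$ --- comparable to $\|\eta\|_{W^{1,2}}^2$ --- by the ellipticity \eqref{eq:ellipticity}, the bounds $1/C\le m\le C$ and $V\ge0$, together with either a Poincar\'e inequality (when the test functions are forced to vanish on a positive-capacity subset of $M\setminus U$) or the nondegeneracy of $V$ (in the remaining case), exactly as for the Neumann problem on $M$. Hence $Lv=f$ on $U$ has, by the Fr\'echet--Riesz theorem, a unique weak solution $v\in W_0^{1,2}(U)$ for every $f\in L^2(U,m\,dx)$, with bounded solution operator, and $W_0^{1,2}(U)\hookrightarrow L^2(U)$ is compact by Rellich--Kondrachov (only boundedness of $U$ is needed). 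The spectral theorem applied to the resulting compact self-adjoint inverse produces an orthonormal basis $\vp_j$ of $L^2(U,m\,dx)$ with $L\vp_j=\mu_j\vp_j$ weakly on $U$ and $\mu_j>0$. Extending $\vp_j$ by $0$ on $M\setminus U$ keeps it in $W_0^{1,2}(U)\subset W^{1,2}(M)$, by the very definition of $W_0^{1,2}(U)$ as a closure in the $W^{1,2}(\RR^n)$ norm.

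Next I would prove H\"older regularity of $\vp=\vp_j$. A short Moser iteration --- using $\vp\in W^{1,2}\subset L^{2^*}$ to improve the integrability of the right-hand side $\mu_j\vp$ in finitely many steps --- gives $\vp\in L^\infty(M)$, so $L\vp=\mu_j\vp$ is a uniformly elliptic divergence-form equation with bounded right-hand side, and there are four kinds of points of $\overline{U}$ to treat. (i) At interior points of $\Omega\setminus K$, the interior De Giorgi--Nash--Moser estimate (Theorem~8.24 of \cite{GT}) gives $\vp\in C^\alpha_{\loc}$. (ii) At points of $\partial\Omega$ away from $K$, the even bi-Lipschitz reflection across the Lipschitz boundary of $\Omega$ --- as in Propositions~\ref{prop:ubounds} and \ref{prop:boundaryHolder} --- produces a uniformly elliptic equation with bounded measurable coefficients in a full Euclidean neighborhood, reducing to case (i). (iii) At a point $x_0\in K\cap\partial U$ lying in the interior of $M$, the extended $\vp$ vanishes on a neighborhood of $x_0$ within $K$, and the cone condition forces $K$ to satisfy a scale-invariant capacity density (Wiener-type) condition at $x_0$: the chart $F$ maps $K$ onto a set containing a solid cone, which has positive capacity density, and this property is preserved under the bi-Lipschitz $F^{-1}$. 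The boundary De Giorgi--Nash--Moser estimate for divergence-form operators whose Dirichlet set has positive capacity density then gives $\vp\in C^\alpha$ at $x_0$ (see \cite{GT}). (iv) At a mixed point $x_0\in\partial\Omega\cap K\cap\partial U$, one first applies the reflection of (ii), obtaining a uniformly elliptic divergence-form equation in a full neighborhood of $x_0$ whose Dirichlet set is $K$ together with its reflection; this set still satisfies the capacity density condition, because the reflection is bi-Lipschitz and that condition is bi-Lipschitz invariant, so one concludes as in (iii). Covering $\overline{U}$ by finitely many such neighborhoods and taking the smallest exponent yields $\vp_j\in C^\alpha(M)\cap W^{1,2}(M)$.

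The hard part will be the Dirichlet and mixed points, cases (iii)--(iv): one must confirm that the cone condition implies the scale-invariant capacity density condition, that this condition is preserved both by the straightening chart $F$ and by the even reflection across the Lipschitz boundary, and that the resulting Dirichlet set therefore remains ``fat'' at $x_0$ after the composition of these bi-Lipschitz maps. Because only qualitative continuity up to the boundary is needed --- the constants in the main theorems are independent of $r$, $\eps$, and the Lipschitz character of $\Omega$ --- any positive H\"older exponent suffices, which is exactly what makes this reduction robust. The remaining ingredients, routine but worth stating carefully, are the coercivity dichotomy in the first step and the Moser iteration giving $\vp_j\in L^\infty$ before the oscillation estimates are invoked.
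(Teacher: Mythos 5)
Your proposal is correct and follows essentially the same route as the paper: coercivity plus the spectral theorem for compact self-adjoint operators give the orthonormal eigenbasis with $\mu_j>0$, and the $C^\alpha$ regularity is obtained by interior De Giorgi--Nash--Moser, even reflection across $\partial\Omega$ to remove the Neumann condition, and the bi-Lipschitz cone condition on $K$ to handle the Dirichlet part; this is exactly the content delegated to Proposition~\ref{prop:boundaryHolder}. The one noticeable variation is the lemma invoked for Dirichlet boundary H\"older continuity: the paper straightens $F(K)$ to an honest cone by the bi-Lipschitz change of variables and then cites Theorem 8.27 of \cite{GT} directly (noting in a footnote that the extra oscillation term over $K$ vanishes since $\vp\equiv 0$ there), whereas you pass through a scale-invariant capacity-density condition and observe that it is bi-Lipschitz invariant, so it survives both the straightening chart for $K$ and the Neumann reflection. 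Both routes are sound and rest on the same estimate; the capacity-density phrasing is a bit more intrinsic, but your citation ``(see \cite{GT})'' for a capacity-density H\"older theorem is not quite what is there --- in \cite{GT} the sufficient hypothesis is the exterior cone condition, so one should either change variables first (as the paper does) or cite a reference that states the capacity-density form explicitly. Two small further remarks: the separate Moser iteration to get $\vp_j\in L^\infty$ is unnecessary --- the equation $(L-\mu_j)\vp_j=0$ already has bounded (indeed zero) right-hand side, so Theorems 8.25--8.26 of \cite{GT} applied after reflection give the sup bound directly; and in the coercivity dichotomy, the Poincar\'e route applies whenever $K\neq\emptyset$, since $K\cap\partial U$ then has positive capacity by the cone condition, while the $V$-nondegeneracy route is needed only when $K=\emptyset$ and $U=M$.
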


The proof of the existence of the complete orthonormal basis of eigenfunctions is the same as in the case of $K=\emptyset$, that is, the case of $\psi_j$ above. 
 See Proposition~\ref{prop:boundaryHolder} for the proof $C^\alpha$ regularity.
(At interior points the proof is similar to the case of $Lv=f$ above.
 The boundary regularity is proved by reducing to a Dirichlet problem using an even reflection.)

\section{Agmon estimates} \label{sec:agmon}

We will frequently write
\[
\nabla_A = A^{1/2} \nabla
\]
in which $A^{1/2}=A^{1/2}(x)$ is the positive definite square root of the matrix $A(x)$ and $\nabla$ is a column vector. Thus, we have 
\[
 \nabla_A \vp \cdot \nabla_A \eta 
= ( A \nabla \vp ) \cdot( \nabla \eta);
\quad |\nabla_A\vp|^2 = (A \nabla \vp)\cdot \nabla \vp. 
\]

\begin{lemma} \label{lem:identity} Assume that $f$ and $u$ belong to $W^{1,2}(M)$,  that $V$, $f$, and $1/u$ belong to $L^\infty(M)$, and that $u$ satisfies $Lu = 1$ weakly on $M$. Then 
\[
\int_M( |\nabla_A f|^2 + Vf^2)\, m \, dx  
= \int_M \left(u^2 \left| \nabla_A (f/u)\right|^2 +\frac1u f^2  \right) \, m\, dx.
\]
\end{lemma}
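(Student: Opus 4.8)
The plan is to prove the identity by expanding the right-hand side using the product rule and then recognizing the weak equation $Lu = 1$ as the mechanism that converts the cross term into the potential term on the left. First I would set $g = f/u$, so that $f = gu$, and note that under the stated hypotheses ($f, u \in W^{1,2}(M)$ with $V, f, 1/u \in L^\infty$) the function $g$ also lies in $W^{1,2}(M) \cap L^\infty(M)$, since $u \ge 1/\bar V > 0$ is bounded below (Proposition~\ref{prop:ubounds}) and $1/u$ is bounded above. In particular, $g$ is an admissible test function in the weak formulation \eqref{eq:weakLphi=f} on $M$, and products like $g^2$, $gu$, etc., are legitimate elements of $W^{1,2}(M)$ by the usual calculus of Sobolev functions (product rule for a $W^{1,2} \cap L^\infty$ pair).

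Next I would compute $\nabla f = \nabla(gu) = u\nabla g + g \nabla u$ pointwise a.e., so that
\[
|\nabla_A f|^2 = (A\nabla f)\cdot \nabla f = u^2 |\nabla_A g|^2 + 2ug\, (A\nabla g)\cdot \nabla u + g^2 |\nabla_A u|^2.
\]
Integrating against $m\,dx$ and adding $\int_M V f^2\, m\,dx = \int_M V g^2 u^2\, m\,dx$, the left-hand side becomes
\[
\int_M u^2 |\nabla_A g|^2\, m\,dx + \int_M \left[ 2ug\,(A\nabla u)\cdot\nabla g + g^2 |\nabla_A u|^2 + V g^2 u^2 \right] m\,dx.
\]
The first term is exactly the first term on the right-hand side of the claimed identity, so it remains to show that the bracketed expression integrates to $\int_M u^{-1} f^2\, m\,dx = \int_M g^2 u\, m\,dx$. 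The key observation is that $2ug\,(A\nabla u)\cdot\nabla g + g^2|\nabla_A u|^2 = (A\nabla u)\cdot \nabla(g^2 u)$; indeed, $\nabla(g^2 u) = 2gu\nabla g + g^2 \nabla u$, and dotting with $A\nabla u$ gives precisely those two terms. Therefore the bracketed integral equals
\[
\int_M \left[ (A\nabla u)\cdot \nabla(g^2 u) + V u \, (g^2 u) \right] m\,dx = \int_M (g^2 u)\, m\,dx,
\]
where the last equality is the weak equation $Lu = 1$ tested against $\eta = g^2 u \in W^{1,2}(M)$ (legitimate since $g \in W^{1,2}\cap L^\infty$ and $u \in W^{1,2}\cap L^\infty$, so $g^2 u \in W^{1,2}(M)$). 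Since $g^2 u = f^2/u$, this is exactly $\int_M u^{-1} f^2\, m\,dx$, completing the proof.

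The main obstacle I anticipate is purely at the level of Sobolev bookkeeping rather than any deep idea: one must check carefully that $g = f/u$, $g^2$, and $g^2 u$ genuinely belong to $W^{1,2}(M)$ with the expected (Leibniz) gradients, so that the pointwise manipulations above are justified and so that $\eta = g^2 u$ is a legitimate test function in Definition~\ref{def:weakLphi=f} on $M$. This rests on three ingredients already available: the lower bound $u \ge 1/\bar V$ from Proposition~\ref{prop:ubounds} (ensuring $1/u$ is bounded and hence $g$ is bounded and in $W^{1,2}$), the hypothesis that $1/u \in L^\infty$, and the standard fact that the product of two $W^{1,2}\cap L^\infty$ functions lies in $W^{1,2}\cap L^\infty$ with the Leibniz rule valid a.e. A small additional point is that $u \in C^\alpha(M)$ (again Proposition~\ref{prop:ubounds}) makes the pointwise values of $u$ and $1/u$ unambiguous. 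Once these membership facts are in hand, the identity is a one-line consequence of the Leibniz rule and the single application of the weak equation for $u$, with no integration by parts on the left-hand side ever needed.
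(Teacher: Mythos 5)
Your proof is correct and is essentially the paper's own argument: both test the weak equation $Lu=1$ against $\eta = f^2/u$ and use the product rule to identify $(A\nabla u)\cdot\nabla(f^2/u)$ with $|\nabla_A f|^2 - u^2|\nabla_A(f/u)|^2$. You simply unfold that algebraic identity explicitly via $g=f/u$, whereas the paper states it in one line, and your appeal to Proposition~\ref{prop:ubounds} for the lower bound on $u$ is superfluous since $1/u\in L^\infty$ is already a hypothesis of the lemma.
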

\begin{proof}
The function $f^2/u$ belongs to $W^{1,2}(M)$, so we may use it as a test function in the weak form of $Lu = 1$ to obtain
\begin{equation*}
\int_M [(\nabla_A u \cdot \nabla_A(f^2/u) ) + V u(f^2/u) ] \, m \, dx  
= \int_M (f^2/u)\, m \, dx  .
\end{equation*}
Substituting the identity $\nabla_A u \cdot \nabla_A (f^2/u)=|\nabla_A f|^2 - u^2 |\nabla_A (f/u)|^2$ (from the product rule), this becomes
\[
\int_M( |\nabla_A f|^2  - u^2 |\nabla_A (f/u)|^2 + Vf^2)\, m \, dx = \int_M (f^2/u)\, m \, dx,
\]
which, after moving a term from the left to the right, is the desired result.
\end{proof}

Given the importance of Lemma~\ref{lem:identity} to this paper, we wish to elaborate on it, recapitulating
the introduction with more details. Recall that 
\[
Lf = -\frac1m \div(mA\nabla f) + Vf
\]
in the weak sense. Define the operator
$\tilde L$ by 
\[
\tilde L g := \frac1u L(gu).
\]
In other words, $\tilde L$ is the conjugation of $L$ by the operator multiplication by $u$. If the functions $m$ and $A$ are differentiable, then one can use equation $Lu=1$ to compute that 
\[
\tilde L g = -\frac1{mu^2} \div (mu^2 A \nabla g) + \frac1u g \, .
\]
Note that the operator $\tilde L$ is of the same form as $L$ but with a different density and potential. The key point is that the potential $V$ in $L$ has been replaced by the potential $1/u$ in $\tilde L$. Mechanisms of this type are familiar in the theory of second order differential equations. Conjugation of operators of the form $-\Delta + V$ using an auxiliary solution is a standard device leading to the generalized maximum principle  (see Theorem 10, page 73 \cite{PW}). A similar device appears even earlier in work of Jacobi on conjugate points and work of Sturm on oscillation of eigenfunctions. In all of these cases, the multipliers are eigenfunctions or closely related supersolutions rather than solutions to the equation $Lu=1$. 

Consider the space $L^2(M,m\,dx)$ with inner product $\langle \,\cdot\, , \, \cdot\, \rangle$. The operators $L$ and $u^2 \tilde L$ are self adjoint in this inner product. Using the formula for $\tilde L$ above, one could derive the lower bound $\langle Lf,f\rangle \ge \langle (1/u)f, f\rangle$ formally by substituting $f = gu$:
\[
\langle Lf, f\rangle 
= \langle u^2 \tilde L g, g\rangle 
\ge \langle u^2 (1/u)g, g\rangle 
= \langle (1/u)f,f\rangle.
\]
Lemma~\ref{lem:identity} implies that the identity $\langle Lf, f\rangle = \langle u^2 \tilde L g, g\rangle$ is valid in weak form. Indeed, it says that 
\[
\langle Lf, f\rangle = \int_M( |\nabla_A f|^2 + Vf^2)\, m \, dx
= \int_M \left[
u^2 |\nabla_A (f/u)|^2 + \frac1u f^2  \right] \, m\, dx,
\]
and so, since $g=f/u$, 
\[
 \langle Lf, f\rangle 
 =  \int_M u^2 \left[|\nabla_A g|^2 + \frac1u g^2\right] \, m \, dx
 =  \langle u^2 \tilde L g, g\rangle.
 \]

Although conjugation and the calculation of $\tilde L$ leads to
our identity, the weak form has considerable advantages. It is easier to check the weak formula than the differential formula for $\tilde L$ because it only involves first derivatives.  Moreover,
because
we only differentiated once and didn't integrate by parts,
our proof of Lemma~\ref{lem:identity} 
was not only shorter but also more general in that it applied to bounded measurable $m$ and $A$.  

We will now derive estimates of Agmon type from Lemma~\ref{lem:identity}.

\begin{lemma}\label{lem:eigenidentity}
Suppose $\vp$ belongs to $W^{1,2}(M)\cap C(M)$, $\vp = 0$ on a compact subset $K$ of $M$ and $L\vp = \mu \vp$ weakly on $M\setminus K$. Let $u$ be as in Lemma~\ref{lem:identity} and let $g$ be a Lipschitz function on $M$.  Then
\begin{equation}\label{eq:eigenidentity}
\int_M \left[u^2 |\nabla_A (g\vp/u)|^2 
+ \left(\frac1u - \mu\right) (g\vp)^2\right] \, m \, dx  
= 
\int_M| 
\nabla_A 
g|^2 \vp^2 \, m \, dx  \, .
\end{equation}
Furthermore, setting $g = \chi e^h$ with $h$ and $\chi$ Lipschitz functions on $M$, we have
\begin{multline} \label{eq:agmonidentity}
\int_M u^2 
\left| \nabla_A \left(\frac{\chi e^h \vp}{u}\right)\right|^2 \, m \, dx  
 \ +  
\int_M  \left(\frac1u-\mu - |\nabla_A h|^2\right) (\chi e^h \vp)^2 \, m \, dx   \\
 = \int_M 
\left(|\chi \nabla_A h + \nabla_A \chi|^2 
- |\chi \nabla_A h|^2\right)(e^h\vp)^2\, m \, dx.
\end{multline}
\end{lemma}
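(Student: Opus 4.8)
The plan is to derive \eqref{eq:eigenidentity} from Lemma~\ref{lem:identity} by a careful choice of test function, and then obtain \eqref{eq:agmonidentity} as a purely algebraic consequence. First I would apply Lemma~\ref{lem:identity} with $f$ replaced by $g\vp$; this is legitimate because $g$ is Lipschitz and $\vp\in W^{1,2}(M)\cap C(M)$, so $g\vp\in W^{1,2}(M)$, and the boundedness hypotheses on $V$, $1/u$ carry over (while $g\vp$ is bounded since $\vp$ is continuous on the compact set $M$ and $g$ is Lipschitz). This yields
\[
\int_M\bigl(|\nabla_A(g\vp)|^2 + V(g\vp)^2\bigr)\,m\,dx
= \int_M\Bigl(u^2\bigl|\nabla_A(g\vp/u)\bigr|^2 + \tfrac1u (g\vp)^2\Bigr)\,m\,dx.
\]
So it remains to show that the left-hand side equals $\int_M |\nabla_A g|^2\vp^2\,m\,dx + \int_M \mu (g\vp)^2\,m\,dx$, after transposing the $\tfrac1u(g\vp)^2$ term appropriately; equivalently, that
\[
\int_M\bigl(|\nabla_A(g\vp)|^2 + (V-\mu)(g\vp)^2\bigr)\,m\,dx = \int_M |\nabla_A g|^2\vp^2\,m\,dx.
\]
This is where I would use the weak eigenfunction equation $L\vp=\mu\vp$ on $M\setminus K$.

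The natural test function to plug into the weak form of $L\vp = \mu\vp$ is $\eta = g^2\vp$: since $\vp=0$ on $K$ and $\vp$ is continuous, the support of $g^2\vp$ sits inside $M\setminus K$ (after an approximation to make it a legitimate element of the closure of test functions supported away from $K$), and $g^2\vp\in W^{1,2}(M)$. Testing gives
\[
\int_M\bigl(\nabla_A\vp\cdot\nabla_A(g^2\vp) + (V-\mu)g^2\vp^2\bigr)\,m\,dx = 0.
\]
Now expand $\nabla_A(g^2\vp) = g^2\nabla_A\vp + 2g\vp\,\nabla_A g$ via the product rule, and use the pointwise identity $\nabla_A\vp\cdot(g^2\nabla_A\vp + 2g\vp\nabla_A g) = |\nabla_A(g\vp)|^2 - |\nabla_A g|^2\vp^2$ (the same completing-the-square manipulation used in Lemma~\ref{lem:identity}'s proof, here with roles of $g$ and $\vp$ reversed). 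Substituting turns the tested equation into exactly the displayed identity above, and combined with the first display this gives \eqref{eq:eigenidentity}.

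For \eqref{eq:agmonidentity}, I would simply substitute $g = \chi e^h$ into \eqref{eq:eigenidentity}. The right-hand side becomes $\int_M |\nabla_A(\chi e^h)|^2\vp^2\,m\,dx$, and since $\nabla_A(\chi e^h) = e^h(\chi\nabla_A h + \nabla_A\chi)$ we get $\int_M |\chi\nabla_A h + \nabla_A\chi|^2 (e^h\vp)^2\,m\,dx$. On the left, $g\vp/u = \chi e^h\vp/u$ reproduces the first term verbatim, and $(\tfrac1u-\mu)(g\vp)^2 = (\tfrac1u-\mu)(\chi e^h\vp)^2$. Then I move the quantity $\int_M |\chi\nabla_A h|^2(e^h\vp)^2\,m\,dx = \int_M |\nabla_A h|^2(\chi e^h\vp)^2\,m\,dx$ from the right-hand side to the left-hand side, which produces the $-|\nabla_A h|^2$ term inside the second integral on the left and the $-|\chi\nabla_A h|^2$ correction on the right, exactly as stated.

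The main obstacle is the justification of testing the weak equation on $M\setminus K$ with $g^2\vp$: strictly, the admissible test functions are $C^1$ functions compactly supported in $U = M\setminus K$ (and their $W^{1,2}$-closure $W^{1,2}_0(U)$), whereas $g^2\vp$ is only Lipschitz-times-continuous and merely \emph{vanishes} on $K$ rather than vanishing in a neighborhood. I would handle this by a standard cutoff-and-mollify argument: since $\vp$ is continuous and $\vp|_K=0$, the truncations $(\vp - t)_+ - (\vp+t)_-$ (or $\vp\cdot\min(1,\dist(\cdot,K)/t)$-type cutoffs) are supported away from $K$, lie in $W^{1,2}$, converge to $\vp$ in $W^{1,2}(M)$ as $t\to0$, and multiplying by the Lipschitz function $g^2$ preserves these properties; one passes to the limit in the weak equation. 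All the other steps are routine product-rule and completing-the-square computations of the same flavor as in Lemma~\ref{lem:identity}.
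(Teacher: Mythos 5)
Your proposal is correct and follows essentially the same route as the paper: apply Lemma~\ref{lem:identity} with $f=g\vp$, test the weak equation $L\vp=\mu\vp$ on $M\setminus K$ with $\eta = g^2\vp$, use the product-rule identity $\nabla_A\vp\cdot\nabla_A(g^2\vp)=|\nabla_A(g\vp)|^2-\vp^2|\nabla_Ag|^2$ to combine the two, and then derive \eqref{eq:agmonidentity} from \eqref{eq:eigenidentity} by the algebraic expansion of $|\nabla_A(\chi e^h)|^2$. The only difference is that you spell out the justification that $g^2\vp\in W_0^{1,2}(M\setminus K)$ (via truncation of $\vp$ near $K$ and mollification), whereas the paper simply asserts that $g^2\vp$ ``can be used as a test function'' since it vanishes on $K$; your added care here is warranted and the approximation you sketch is the standard and correct one.
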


\begin{proof} 
Since $g^2 \vp \in W^{1,2}(M)$ and $g^2\vp= 0$ on $K$, it can be used as a test function for the equation $L\vp = \mu \vp$, yielding
\begin{equation}\label{eq:Lphi}
\int_M(V-\mu)g^2 \vp^2 \, m \, dx
= -\int_M \nabla_A \vp \cdot \nabla_A (g^2\vp) 
\, m\, dx \, .
\end{equation}
Substituting $f= g\vp$ in Lemma~\ref{lem:identity}, gives 
\begin{multline*}
\int_M \big[|\nabla_A (g\vp)|^2 + (V-\mu)g^2 \vp^2 \big] \, m \, dx  
\\= 
\int_M \Big[u^2 | \nabla_A (g\vp/u)|^2 
+\Big(\frac1u - \mu\Big) g^2 \vp^2 \Big] \, m \, dx  .
\end{multline*}
On the other hand, \eqref{eq:Lphi} implies that
\begin{multline*}
\int_M [|\nabla_A (g\vp)|^2 + (V-\mu)g^2 \vp^2] \, m \, dx  \\
= \int_M [|\nabla_A (g\vp)|^2 -  \nabla_A \vp \cdot \nabla_A (g^2\vp)] \, m \, dx  
= \int_M \vp^2 |\nabla_A g|^2 \, m \, dx.  
\end{multline*}
This proves \eqref{eq:eigenidentity}. The second formula,\eqref{eq:agmonidentity}, follows from the first, by setting $g = \chi e^h$, and using the formula
\[
|\nabla_A g|^2 
= |\nabla_A (\chi e^h)|^2 = (\chi e^h)^2 |\nabla_Ah|^2 
+ (|\chi \nabla_A h + \nabla_A\chi|^2 - |\chi \nabla_A h|^2) e^{2h}.
\]
\end{proof}

Let $w$ be a nonnegative, continuous function on $M$. Assume the elliptic matrix $A$ is continuous on $M$. Denote the entries of $B = A^{-1}$ by $b_{ij}(x)$.  We define the distance $\rho(x,y)$ on $M$ for the degenerate Riemannian metric $\displaystyle ds^2 = w(x) \sum b_{ij}dx_idx_j$ by
\begin{equation}\label{eq6.3}
\rho(x,y)=\inf_\gamma \int_0^1 \Bigl(w(\gamma(t))\sum_{i,j=1}^n
b_{ij} (\gamma(t)) \dot\gamma_i(t)\dot \gamma_j(t) \Bigr)^{1/2}\,dt,
\end{equation}
where the infimum is taken over all absolutely continuous paths $\gamma:[0,1]\to M$ such that $\gamma(0)=x$ and $\gamma(1)= y$.  (Note that the distance between points in a connected component of the set $\{w=0\}$ is zero.)

With these notations, we have the following lemma.
\begin{lemma}[\hbox{[\citenum{AgmonBook}, Theorem~4, p.~18]}]\label{lem:Agmondist}
If $h$ is  real-valued and $|h(x)-h(y)| \leq \rho(x,y)$ for all $x,y \in M$, then $h$ is a Lipschitz function, and 
\[
|\nabla_A   h(x)|^2\leq w(x) \quad \mbox{for all } \ x\in M.
\]
In particular, this holds when 
\[
h(x) = \inf_{y\in E} \rho(x,y),
\]
for any nonempty set $E\subset M$.
\end{lemma}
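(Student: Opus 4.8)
The plan is to read both conclusions directly off the definition \eqref{eq6.3} of $\rho$: the Lipschitz property will come from comparing $\rho$ with the Euclidean distance, and the bound $|\nabla_A h|^2\le w$ will come from differentiating the hypothesis $|h(x)-h(y)|\le\rho(x,y)$ along straight segments.

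First I would note that, since $A$ is continuous on the compact set $M$, the ellipticity estimate \eqref{eq:ellipticity} also holds, with a constant of the same type, for $B=A^{-1}=(b_{ij})$, while $0\le w\le\sup_M w<\infty$. Hence, bounding $\rho(x,y)$ by the $ds$-length of a Euclidean segment from $x$ to a nearby $y$ (which lies in $M$ once $x$ is an interior point and $|x-y|$ is small), one gets $\rho(x,y)\le C|x-y|$ locally, so the hypothesis forces $h$ to be locally Lipschitz, hence Lipschitz on the compact set $M$. By Rademacher's theorem $h$ is then differentiable almost everywhere, and it is at those points (a set of full measure, all interior) that I will verify the gradient bound; this is the sense in which the inequality $|\nabla_A h|^2\le w$ is to be read.

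Fix such a point $x_0$ and a vector $\xi$, and apply the hypothesis to the segment $\gamma_t(s)=x_0+ts\xi$, $s\in[0,1]$, which is an admissible competitor in \eqref{eq6.3} for the pair $(x_0,x_0+t\xi)$ once $t>0$ is small:
\[
|h(x_0+t\xi)-h(x_0)|\ \le\ \rho(x_0,x_0+t\xi)\ \le\ t\int_0^1\Bigl(w(x_0+ts\xi)\sum_{i,j}b_{ij}(x_0+ts\xi)\,\xi_i\xi_j\Bigr)^{1/2}\,ds.
\]
Dividing by $t$ and letting $t\to0^+$: the left-hand side tends to $|\nabla h(x_0)\cdot\xi|$ because $h$ is differentiable at $x_0$, and the right-hand side tends to $\bigl(w(x_0)\sum_{i,j}b_{ij}(x_0)\xi_i\xi_j\bigr)^{1/2}=\bigl(w(x_0)\bigr)^{1/2}\,|A^{-1/2}(x_0)\xi|$ by continuity of $w$ and of the $b_{ij}$ (dominated convergence for the integral). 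Thus $|\nabla h(x_0)\cdot\xi|\le\bigl(w(x_0)\bigr)^{1/2}|A^{-1/2}(x_0)\xi|$ for every $\xi$; substituting $\xi=A^{1/2}(x_0)\zeta$ this reads $|\nabla_A h(x_0)\cdot\zeta|\le\bigl(w(x_0)\bigr)^{1/2}|\zeta|$ for every $\zeta$, and taking $\zeta=\nabla_A h(x_0)$ gives $|\nabla_A h(x_0)|^2\le w(x_0)$.

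Finally, for the displayed special case $h(x)=\inf_{y\in E}\rho(x,y)$, I would only check the hypothesis $|h(x)-h(x')|\le\rho(x,x')$: since $\rho$ is symmetric and satisfies the triangle inequality (reverse, resp. concatenate, the competing paths in \eqref{eq6.3}), for every $y\in E$ one has $\rho(x,y)\le\rho(x,x')+\rho(x',y)$, and taking the infimum over $y$ gives $h(x)\le\rho(x,x')+h(x')$; by symmetry $|h(x)-h(x')|\le\rho(x,x')$, so the main part applies. The only delicate points are the two passages to the limit above — differentiability of $h$ on the left, uniform continuity of the coefficients on the right — together with the elementary duality fact that $|\eta\cdot\zeta|\le c\,|A^{-1/2}\zeta|$ for all $\zeta$ is equivalent to $|A^{1/2}\eta|\le c$; I expect none of these to present a real obstacle.
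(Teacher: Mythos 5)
Your argument is correct and self-contained. It is worth noting, though, that the paper does not actually prove this lemma: it cites it directly from Agmon's book and then adds a single remark, namely that Agmon states the result only for strictly positive weights $w$, and that one recovers the degenerate case by applying the positive-weight version to $w+\epsilon$ and letting $\epsilon\searrow 0$. Your proof sidesteps that approximation entirely: since you work directly with the definition \eqref{eq6.3} and take $t\to 0^+$ along a straight segment at an interior Rademacher point, the passage to the limit uses only continuity of $w$ and of the $b_{ij}$, which is available whether or not $w$ vanishes. So where the paper relies on an external reference plus a perturbation of the metric, you give a direct, elementary derivation that already covers the degenerate case, at the cost of having to spell out the Rademacher/a.e.\ interpretation of ``$|\nabla_A h|^2\le w$ for all $x\in M$'' and the local comparison $\rho(x,y)\le C|x-y|$ near a bi-Lipschitz boundary. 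Both steps are handled appropriately. The one place to be slightly more careful is the Lipschitz claim on all of $M$: for $x,y$ both close to $\partial\Omega$ the Euclidean segment may leave $M$, so the comparison curve has to be taken inside $M$ using the bi-Lipschitz boundary structure; you gesture at this but it deserves one explicit sentence.
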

The lemma is stated in \cite{AgmonBook} for $w$ strictly positive. Considering the case $w(x) + \epsilon$ and taking the limit as $\epsilon \searrow 0$ gives the result for non-negative $w$.

Recall that $V$ is a measurable function on $M$ such that $0 \le V(x) \le \bar V$, and $V$ is nonzero on a set of positive measure and $u$ is the unique weak solution to $Lu = 1$ on $M$, the landscape function.

Fix $\mu\ge0$, and set
\[
w_\mu(x) = \left(\frac1{u(x)} - \mu \right)_+ 
= \max
\left(\frac1{u(x)} - \mu,\, 0\right).
\]
With our additional assumption that the elliptic matrix $A$ has continuous coefficients on $M$, we can define $\rho_\mu(x,y)$ as the Agmon distance associated to the weight $w_\mu(x)$. For any $E\subset M$, denote
\[
\rho_\mu(x, E) = \inf_{y\in E} \rho_\mu(x,y).
\]
\begin{theorem} \label{thm:expdecay1} 
Let
$0 \le \mu \le \nu \le \bar V$ be constants. With $u$ the landscape function as above, denote
\[
E({\nu}) = \{x\in M: \frac1{u(x)} \le \nu \}.
\]
Let $K$ be a compact subset of $M$. Denote 
\[
h(x) = \rho_\mu(x,E({\nu}) \setminus K), \quad x\in M,
\]
and 
\[
\chi(x) = 
\begin{cases} 
h(x), \ & h(x) < 1, \\
1, \ & h(x) \ge 1. 
\end{cases}
\]
Suppose $\vp$ belongs to $W^{1,2}(M)\cap C(M)$, $\vp = 0$ on $K$, and $L\vp = \mu \vp$ weakly on $M\setminus K$. Then for $0 < \alpha < 1$, 
\begin{multline}\label{eq:expdecay1}
\int_{M} u^2 
 \left| \nabla_A \left( \frac{\chi e^{\alpha h}\vp}u \right) \right|^2 \, m \, dx  
 + (1-\alpha^2)  \int_{M}
\left(\frac1u-\mu \right)_+ \left(\chi e^{\alpha h} \vp\right)^2 \, m \, dx  
\\
 \le (1+2\alpha)e^{2\alpha}(\bar V - \mu) \int_{\{0< h< 1\}} \vp^2 \, m \, dx.
\end{multline}
Furthermore, if $\nu = \mu + \delta$, $\delta >0$, we have 
\begin{equation}\label{eq:expdecay2}
\int_{h\ge 1} e^{2\alpha h}
\left(|\nabla_A\vp|^2 + \bar V \vp^2\right)\, m \, dx  
\le \left(450 + \frac{130\bar V}{(1-\alpha)\delta} \right) 
\bar V \int_M \vp^2 \, m \, dx. 
\end{equation}
\end{theorem}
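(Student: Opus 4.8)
The plan is to start from the Agmon-type identity \eqref{eq:agmonidentity} of Lemma~\ref{lem:eigenidentity}, specialized to the weight $w = w_\mu$ and to the choice $h = \rho_\mu(\,\cdot\,, E(\nu)\setminus K)$ and $\chi$ as in the statement. By Lemma~\ref{lem:Agmondist} applied with $w = w_\mu$, the Lipschitz function $h$ satisfies $|\nabla_A h|^2 \le w_\mu = (1/u - \mu)_+$ pointwise. The first goal is to replace $h$ by $\alpha h$: writing $g = \chi e^{\alpha h}$ in \eqref{eq:agmonidentity}, the potential term on the left becomes $(1/u - \mu - \alpha^2|\nabla_A h|^2)(\chi e^{\alpha h}\vp)^2$, and since $\alpha^2 |\nabla_A h|^2 \le \alpha^2 (1/u - \mu)_+$ we get the lower bound $(1 - \alpha^2)(1/u - \mu)_+ (\chi e^{\alpha h}\vp)^2$ on that region (and the term is harmless where $1/u - \mu \le 0$, i.e. on $E(\mu)$, since there $w_\mu = 0$ so $h$ is constant on that component and $\nabla_A h = 0$ there too). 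The right-hand side of \eqref{eq:agmonidentity} is $(|\chi\nabla_A h + \nabla_A\chi|^2 - |\chi\nabla_A h|^2)(e^{\alpha h}\vp)^2$; since $\chi$ and $h$ have the same gradient on $\{0<h<1\}$ and $\nabla\chi = 0$ on $\{h>1\}\cup\{h<0\}=\{h\ge 1\}$ (here $h\ge 0$ always, as it is a distance to a set), this right-hand side is supported in $\{0 < h < 1\}$, where it is bounded by a constant times $|\nabla_A h|^2 \vp^2 \le (1/u - \mu)_+ \vp^2 \le (\bar V - \mu)\vp^2$ (using $1/u \le \bar V$ from Proposition~\ref{prop:ubounds}, rephrased as $u \ge 1/\bar V$). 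Tracking the constants — the cross term expands as $|\chi\nabla_A h + \nabla_A\chi|^2 - |\chi\nabla_A h|^2 = 2\chi \nabla_A h\cdot\nabla_A\chi + |\nabla_A\chi|^2$, and on $\{0<h<1\}$ we have $\chi = h < 1$, $\nabla_A\chi = \nabla_A h$, giving $(2h+1)|\nabla_A h|^2 \le 3|\nabla_A h|^2$, while the $e^{2\alpha h}$ factor is at most $e^{2\alpha}$ there — yields \eqref{eq:expdecay1} with the stated factor $(1+2\alpha)e^{2\alpha}$.

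For \eqref{eq:expdecay2} the point is to convert the left side of \eqref{eq:expdecay1} back into $\int_{h\ge 1} e^{2\alpha h}(|\nabla_A\vp|^2 + \bar V\vp^2)$. On the region $\{h \ge 1\}$ we have $\chi \equiv 1$, so the first term of \eqref{eq:expdecay1} controls $\int_{h\ge 1} u^2 |\nabla_A(e^{\alpha h}\vp/u)|^2 \, m\,dx$. Expanding $\nabla_A(e^{\alpha h}\vp/u) = e^{\alpha h}\nabla_A(\vp/u) + \alpha e^{\alpha h}(\vp/u)\nabla_A h$ and using the triangle inequality in reverse (i.e. $|a|^2 \le 2|a+b|^2 + 2|b|^2$) lets me bound $\int u^2 e^{2\alpha h}|\nabla_A(\vp/u)|^2$ by twice the first term plus $2\alpha^2 \int e^{2\alpha h}\vp^2|\nabla_A h|^2 \le 2\alpha^2\int e^{2\alpha h}\vp^2 (1/u - \mu)_+$, which is absorbed by the second term of \eqref{eq:expdecay1} provided $2\alpha^2/(1-\alpha^2) = 2\alpha^2/((1-\alpha)(1+\alpha))$ is under control. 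Then I need to pass from $u^2 e^{2\alpha h}|\nabla_A(\vp/u)|^2$ back to $e^{2\alpha h}|\nabla_A\vp|^2$: writing $\nabla_A\vp = u\nabla_A(\vp/u) + (\vp/u)\nabla_A u$ and using that $|\nabla_A u|$ is controlled (this is where one uses an a priori bound on $\nabla u$ — but to keep the estimate independent of $u$, the cleaner route is to apply the identity of Lemma~\ref{lem:identity} \emph{again} to $f = \chi e^{\alpha h}\vp$, which reads $\int(|\nabla_A f|^2 + Vf^2) = \int(u^2|\nabla_A(f/u)|^2 + f^2/u)$, so $\int_{h\ge 1} e^{2\alpha h}(|\nabla_A\vp|^2 + V\vp^2)\,m\,dx \le \int(|\nabla_A f|^2 + Vf^2)$ equals $\int u^2|\nabla_A(f/u)|^2 + \int f^2/u$, and both pieces are bounded by the left side of \eqref{eq:expdecay1} together with $\int(1/u - \mu)(\chi e^{\alpha h}\vp)^2$ plus the cheap term $\mu\int f^2 \le \bar V\int f^2$). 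Combining, the whole left side of \eqref{eq:expdecay1} plus the extra $\bar V\int_{\{0<h<1\}}\vp^2$ bounds $\int_{h\ge 1}e^{2\alpha h}(|\nabla_A\vp|^2 + \bar V\vp^2)$; since $\{0<h<1\}\subset M$, feeding in \eqref{eq:expdecay1} gives a bound of the form $C(1 + \bar V/((1-\alpha)\delta))\bar V\int_M\vp^2$.

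The constant $\delta = \nu - \mu$ enters precisely here: on $\{0 < h < 1\}$ I crudely bounded by $(\bar V - \mu)\int_M\vp^2$, but to beat the $1/(1-\alpha)$-type denominators in absorbing the gradient-of-$h$ terms one needs the second term of \eqref{eq:expdecay1} to have a genuine gap, and on the complement of $E(\nu)$ one has $1/u - \mu \ge \nu - \mu = \delta$, so $(1-\alpha^2)(1/u-\mu)_+ \ge (1-\alpha^2)\delta$ there, which is what furnishes the $\delta$ in the denominator $130\bar V/((1-\alpha)\delta)$ after dividing through. The main obstacle I anticipate is the bookkeeping of these constants so that the coefficient comes out as the clean $450 + 130\bar V/((1-\alpha)\delta)$ rather than something worse — in particular making sure the re-application of Lemma~\ref{lem:identity} to pass from the $u$-weighted gradient back to $|\nabla_A\vp|^2$ does not reintroduce a dependence on $\|u\|_\infty$ or $\|\nabla u\|$, and that the various "$|a|^2 \le 2|a+b|^2 + 2|b|^2$" splittings are absorbed rather than accumulating. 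Everything else is a routine, if careful, chain of Cauchy–Schwarz and triangle-inequality estimates driven by the two identities already proved.
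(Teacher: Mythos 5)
Your proof of \eqref{eq:expdecay1} follows the paper's route exactly: apply \eqref{eq:agmonidentity} with $\alpha h$, invoke Lemma~\ref{lem:Agmondist} to control the potential term, localize the right side to $\{0<h<1\}$, and expand the cross term. One small slip: you dropped the $\alpha$ from the cross-term expansion and wrote $(2h+1)|\nabla_A h|^2\le 3|\nabla_A h|^2$; with the $\alpha$ tracked it is $(2\alpha\chi+1)|\nabla_A h|^2\le(2\alpha+1)|\nabla_A h|^2$, which is what delivers the asserted $(1+2\alpha)e^{2\alpha}$.

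For \eqref{eq:expdecay2} you have, however, proposed a genuinely different and in fact cleaner path. The paper tests the equation $Lu=1$ against $f^2/u$ to estimate $\int\nabla_A u\cdot\nabla_A(f^2/u)$, then derives a bound for $\int|\nabla_A u|^2(f/u)^2$ by an absorption trick, and finally writes $\nabla_A f= u\nabla_A(f/u)+(f/u)\nabla_A u$ to bound $\int|\nabla_A f|^2$, arriving at $30e^2\bar V+6e^2\bar V^2/((1-\alpha)\delta)$. Your suggestion to simply apply Lemma~\ref{lem:identity} directly to $f=\chi e^{\alpha h}\vp$, giving $\int|\nabla_A f|^2\le\int u^2|\nabla_A(f/u)|^2+\int f^2/u$, bounds $\int|\nabla_A f|^2$ immediately from \eqref{eq:expdecay1} and $\int f^2/u\le\bar V\int f^2$, skipping both of the paper's intermediate estimates. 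Carried through, this actually yields numerically sharper constants than the stated ones.

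There is, however, a genuine error in the final step as you wrote it: the claim
\[
\int_{\{h\ge 1\}}e^{2\alpha h}\bigl(|\nabla_A\vp|^2 + V\vp^2\bigr)\,m\,dx \le \int_M\bigl(|\nabla_A f|^2 + Vf^2\bigr)\,m\,dx
\]
is false. On $\{h\ge 1\}$ one has $f=e^{\alpha h}\vp$, so
\[
\nabla_A f = e^{\alpha h}\bigl(\nabla_A\vp + \alpha\vp\,\nabla_A h\bigr),
\]
and the cross term $2\alpha\vp\,\nabla_A\vp\cdot\nabla_A h$ can be negative, so $e^{2\alpha h}|\nabla_A\vp|^2\not\le|\nabla_A f|^2$ pointwise. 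The fix is the same triangle inequality you used earlier: $e^{2\alpha h}|\nabla_A\vp|^2 = |\nabla_A f - \alpha(\nabla_A h)f|^2 \le 2|\nabla_A f|^2 + 2\alpha^2|\nabla_A h|^2 f^2 \le 2|\nabla_A f|^2 + 2\bar V f^2$. This is precisely the step the paper takes and it costs only a factor $2$ and an extra $\int f^2$ term, both already controlled. Also note the target integrand is $\bar V\vp^2$, not $V\vp^2$; since $e^{2\alpha h}\vp^2 = f^2$ on $\{h\ge 1\}$, the $\bar V$-term is just $\bar V\int_{\{h\ge 1\}}f^2\le\bar V\int_M f^2$, again controlled by \eqref{eq:expdecay1}. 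With these two repairs your route closes and gives constants at least as good as those in the statement.
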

\begin{proof} Using \eqref{eq:agmonidentity} with  $\alpha h$ in place
of $h$, the first term on the left side is the same as in \eqref{eq:expdecay1}. Since $\chi = 0$ on $E_\mu \setminus K$ and $\vp = 0$ on $K$, we have  $\chi \vp = 0$ on $E_\mu$.  Moreover, by Lemma~\ref{lem:Agmondist} $|\nabla_A h|^2 \le w_\mu(x)$.  Thus,
\begin{align*} 
\int_M \left(\frac1u -\mu - \alpha^2 |\nabla_A h|^2\right) (\chi e^{\alpha h}\vp)^2
&\, m \, dx  
\\& \hskip-2cm
= \int_{M\setminus E_\mu}  \left(\frac1u -\mu - \alpha^2 |\nabla_A h|^2\right) 
(\chi e^{\alpha h}\vp)^2 \, m \, dx  
\\& \hskip-2cm
\ge (1-\alpha^2) \int_{M\setminus E_\mu}  \left(\frac1u -\mu\right)_+ 
(\chi e^{\alpha h}\vp)^2 \, m \, dx  
\\&\hskip-2cm 
= (1-\alpha^2) \int_{M}  \left(\frac1u -\mu\right)_+ 
(\chi e^{\alpha h}\vp)^2 \, m \, dx  \, .
\end{align*}

The right side integrand of \eqref{eq:agmonidentity} is zero almost everywhere on the set $\nabla_A \chi =0$, so we may restrict the integral to the set $\{0 < h < 1\}$. There we have $\chi\equiv h$, so
\[
|\chi \alpha \nabla_A h + \nabla_A \chi|^2 - |\chi \alpha \nabla_A h|^2
= [(\chi \alpha + 1)^2 - \chi^2 \alpha^2] |\nabla_A h|^2 
\le (2\alpha +1)|\nabla_Ah|^2 \, .
\]
Finally, $|\nabla_A h|^2  \le w_\mu(x) \le \bar V - \mu$, by Lemma~\ref{lem:Agmondist} and Proposition~\ref{prop:ubounds}. This concludes the proof of \eqref{eq:expdecay1}.

It remains to prove \eqref{eq:expdecay2}. For convenience, normalize $\vp$ so that its $L^2(M,m\,dx)$ norm is $1$:
\[
\|\vp\|^2 := \int_M \vp^2 \, m \, dx  = 1.
\]
Let $f = \chi e^{\alpha h} \vp$. Since $f=0$ on $E(\nu)$, $(1/u - \mu) \ge \delta$ on $M\setminus E(\nu)$, and $\mu \ge 0$, \eqref{eq:expdecay1} implies
\begin{equation}\label{eq:decayf}
\int_M u^2 \left| \nabla_A (f/u)\right|^2\, m \, dx  
 + (1-\alpha^2)\delta\int_M f^2 \, m \, dx  
\le
(1+2\alpha)e^{2\alpha} \bar V \, .
\end{equation}
Since $\nabla f$ and $\nabla u$ belong to $L^2(M)$, and $1/u$ and $f$ belong to $L^\infty(M)$, $f^2/u$ is a permissible test function. Thus, using $Lu = 1$, $1/u(x)\le \bar V$, $V(x) \ge 0$, and \eqref{eq:decayf}, we have 
\begin{align}\label{3.13a}
\int_M \nabla_A u &\cdot \nabla_A (f^2/u) \, m \, dx  
= \int_M (1-Vu)(f^2/u) \, m \, dx  
\nonumber \\ 
&\le \bar V \int_M f^2  \, m \, dx \le
\frac{(1+2\alpha)e^{2\alpha}}{(1-\alpha^2)\delta } \bar V^2
\le \frac{3e^2}{2(1-\alpha)\delta} \bar V^2 \, .
\end{align}

Next, 
\begin{align*} 
\int_M & |\nabla_A u|^2 ( f  /u)^2  \, m \, dx = -\int_M 2(f/u)(\nabla_A u) \cdot (u\nabla_A(f/u))\, m \, dx  
\\
& \hspace{180pt} + \int_M \nabla_A u \cdot \nabla_A(f^2/u)\, m \, dx   \\
&\hspace{10pt} \le 
\int_M \left[\frac12 (f/u)^2 |\nabla_Au|^2 + 2 u^2 |\nabla_A(f/u)|^2 
+ \nabla_Au \cdot \nabla_A(f^2/u)\right] \, m \, dx.
\end{align*}

Hence, after subtracting the term with factor $1/2$ and multiplying by $2$,\begin{align*}
\int_M |\nabla_A u|^2  (f /u)^2  \, m \, dx  
& \le
\int_M [4u^2 |\nabla_A(f/u)|^2 + 2\nabla_Au  \cdot \nabla_A(f^2/u) ] 
\, m \, dx  \\
& \le 4(1+2\alpha) e^{2\alpha} \bar V + 3e^2 \frac{\bar V^2}{(1-\alpha) \delta} \\
&\le 12 e^{2} \bar V + 3e^2 \frac{\bar V^2}{(1-\alpha) \delta} \, .
\end{align*}
It follows that
\begin{equation} \label{3.14a}
\begin{aligned}
\int_M |\nabla_A f|^2 \, m \, dx  
& =  \int_M 
|u \nabla_A (f/u) 
+ (f/u)\nabla_A u|^2 \, m \, dx  
\\
& \le 
2\int_M u^2 |\nabla_A (f/u)|^2\, m \, dx  
 + 2\int_M |\nabla_A u|^2 (f/u)^2 \, m \, dx  
 \\
& \le  2(1+2\alpha) e^{2\alpha} \bar V + 2\left[12 e^{2} \bar V
 + 3e^2 \frac{\bar V^2}{(1-\alpha) \delta}\right]\, m \, dx   
\\
& \le  30 e^{2} \bar V + 6 e^2 \frac{\bar V^2}{(1-\alpha) \delta} \, .
\end{aligned}
\end{equation}
Finally, since $e^{\alpha h} \vp = f$ on $\{h \ge 1\}$, and $|\nabla_A h|^2 \le \bar V$, we have (by \eqref{3.13a} and \eqref{3.14a} in particular)
\begin{equation} \label{3.15a}
 \begin{aligned}
  \int_{\{ h\ge 1\}} 
e^{2\alpha h} &|\nabla_A \vp|^2 \, m \, dx  
= \int_{\{ h\ge 1\}} 
|\nabla_A (e^{\alpha h}\vp) - \alpha (\nabla_A h) e^{\alpha h} \vp|^2 
\, m \, dx \\
&\le
2\int_{\{ h\ge 1\}} |\nabla_A (e^{\alpha h}\vp)|^2 \, m \, dx  
+ 2 \int_{\{h\ge 1\}}
\alpha^2 |\nabla_A h|^2 (e^{\alpha h} \vp)^2 \, m \, dx \\
&\le 2\int_{\{ h\ge 1\}} |\nabla_A f|^2\, m \, dx   
+ 2 \bar V \int_{\{h\ge 1\}}  f^2 \, m \, dx   \\ 
&\le 60 e^2 \bar V + 12 e^2 \frac{\bar V^2}{(1-\alpha) \delta} 
+ 3e^2 \frac{\bar V^2}{(1-\alpha) \delta} \, . 
 \end{aligned}
\end{equation}

Thus, by \eqref{3.13a} again,
\begin{align*}
\int_{\{ h\ge 1\}} 
e^{2\alpha h} (|\nabla_A \vp|^2 + \bar V \vp^2) \, m \, dx  
&\le 60 e^2 \bar V + 15 e^2 \frac{\bar V^2}{(1-\alpha) \delta} 
+ \frac32 e^2 \frac{\bar V^2}{(1-\alpha) \delta}
\\
&\le \left( 450 + \frac{130\bar V}{(1-\alpha)\delta}\right) \bar V \, .
\end{align*}
\end{proof}

Theorem~\ref{thm:expdecay1} displays the dependence of the constant as $\alpha \to 1$. We state next a variant for $\alpha=1/2$ in the form we will use below.

\begin{corollary} \label{cor:expdecay} 
Let $0< \mu \le \bar \mu$ and $0 < \delta \le \bar V/10$ be constants.  Suppose that $ \bar \mu + \delta \le \bar V$. Let $K$ be a compact subset of $M$, and set
\[
h_K(x) = \bar \rho (x,E({\bar \mu + \delta }) \setminus K), \quad x\in M,
\]
with $\bar \rho = \rho_{\bar \mu}$ the Agmon metric associated to the weight $\bar w(x) = (1/u(x) - \bar \mu)_+$. Suppose $\vp$ belongs to $W^{1,2}(M)\cap C(M)$, $\vp = 0$ on $K$, and $L\vp = \mu \vp$ weakly on $M\setminus K$.   Then 
\begin{equation}\label{eq:expdecay.phi}
\int_{h_K\ge 1} e^{h_K}
\left(|\nabla_A\vp|^2 + \bar V \vp^2\right) \, m \, dx  
\le 18e\left(\frac{\bar V}{\delta}\right)
\bar V \int_M \vp^2\, m \, dx. 
\end{equation}
\end{corollary}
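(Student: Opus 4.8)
The plan is to specialize the argument of Theorem~\ref{thm:expdecay1} to $\alpha=\tfrac12$, with $h=h_K$ and weight $\bar w=(1/u-\bar\mu)_+$, but to re-run the final estimates sharply rather than cite \eqref{eq:expdecay2} verbatim (at $\alpha=\tfrac12$ the latter is off by a fixed factor, and it is stated in terms of the coarser weight $w_\mu$ anyway). Normalize $\int_M\vp^2\,m\,dx=1$, put $\chi=\min(h_K,1)$ and $f=\chi e^{h_K/2}\vp$, and apply identity \eqref{eq:agmonidentity} with $h_K/2$ in place of its generic $h$. Because $\chi=0$ on $E(\bar\mu+\delta)\setminus K$ and $\vp=0$ on $K$, the function $f$ vanishes on $E(\bar\mu+\delta)\supseteq\{1/u\le\bar\mu\}$; hence on $\{f\ne0\}$ one has $1/u>\bar\mu+\delta$ and, by Lemma~\ref{lem:Agmondist}, $|\nabla_A h_K|^2\le\bar w=1/u-\bar\mu$ there, so
\[
\frac1u-\mu-\tfrac14|\nabla_A h_K|^2\ \ge\ \tfrac34\Big(\frac1u-\mu\Big)+\tfrac14(\bar\mu-\mu)\ \ge\ \tfrac34\delta,
\]
using $\mu\le\bar\mu$ and $1/u-\mu\ge\bar\mu+\delta-\mu\ge\delta$. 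The right side of \eqref{eq:agmonidentity} is supported on $\{0<h_K<1\}$, where $\chi=h_K$, so the integrand there equals $(h_K+1)\,|\nabla_A h_K|^2\,e^{h_K}\vp^2\le2e\bar V\,\vp^2$ (using $h_K<1$, $e^{h_K}<e$, $|\nabla_A h_K|^2\le\bar V$). Thus \eqref{eq:agmonidentity} gives $\int_M u^2|\nabla_A(f/u)|^2\,m\,dx+\tfrac34\delta\int_M f^2\,m\,dx\le2e\bar V$, whence
\[
\int_M u^2|\nabla_A(f/u)|^2\,m\,dx\le2e\bar V,\qquad \int_M f^2\,m\,dx\le\frac{8e\bar V}{3\delta}.
\]

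Next I would use the pointwise product-rule identity $|\nabla_A f|^2=\nabla_A u\cdot\nabla_A(f^2/u)+u^2|\nabla_A(f/u)|^2$ (equivalently, Lemma~\ref{lem:identity} for $f$ combined with the $f^2/u$ test of $Lu=1$). Testing $Lu=1$ against $f^2/u$ — admissible since $f\in W^{1,2}(M)\cap L^\infty$ and $1/u\in W^{1,2}(M)\cap L^\infty$ — yields $\int_M\nabla_A u\cdot\nabla_A(f^2/u)\,m\,dx=\int_M(1-Vu)\tfrac{f^2}u\,m\,dx\le\bar V\int_M f^2\,m\,dx$, using $Vu\ge0$ and $1/u\le\bar V$ (Proposition~\ref{prop:ubounds}); so $\int_M|\nabla_A f|^2\,m\,dx\le\bar V\int_M f^2\,m\,dx+2e\bar V$. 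On $\{h_K\ge1\}$ one has $f=e^{h_K/2}\vp$, hence $e^{h_K/2}\nabla_A\vp=\nabla_A f-\tfrac12(\nabla_A h_K)f$ and $e^{h_K}|\nabla_A\vp|^2\le2|\nabla_A f|^2+\tfrac12\bar V f^2$ there; integrating over $\{h_K\ge1\}$ and adding $\bar V\int_{\{h_K\ge1\}}e^{h_K}\vp^2\,m\,dx=\bar V\int_{\{h_K\ge1\}}f^2\,m\,dx\le\bar V\int_M f^2\,m\,dx$ gives
\[
\int_{\{h_K\ge1\}}e^{h_K}\big(|\nabla_A\vp|^2+\bar V\vp^2\big)\,m\,dx\ \le\ \frac{28e\bar V^2}{3\delta}+4e\bar V.
\]
Finally, invoking $\delta\le\bar V/10$ to bound $4e\bar V\le\frac{2e\bar V^2}{5\delta}$ and undoing the normalization yields the asserted estimate, comfortably below the stated constant $18e$.

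The step I expect to need the most care — the crux — is verifying that the mismatch between the metric $\bar\rho=\rho_{\bar\mu}$ (built from $\bar\mu$) and the eigenvalue $\mu\le\bar\mu$ in $L\vp=\mu\vp$ does no harm. It is harmless exactly because $f$ vanishes on $E(\bar\mu+\delta)$: on the support of $f$ the term $\tfrac14|\nabla_A h_K|^2\le\tfrac14(1/u-\bar\mu)$ that must be subtracted is strictly smaller than $\tfrac14(1/u-\mu)$, leaving the coercive term $\tfrac34(1/u-\mu)\ge\tfrac34\delta$ with the right sign; were one forced to use the weight $w_\mu$ instead, this gain (and with it the clean constant) would disappear. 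The remaining ingredients are routine: admissibility of $f^2/u$ as a test function (from $u\in C^\alpha(M)$ with $u\ge1/\bar V$, $h_K$ Lipschitz and hence bounded on the compact $M$ by Lemma~\ref{lem:Agmondist}, and $\vp\in W^{1,2}(M)\cap C(M)$), and the bookkeeping of constants, which the slack in $\delta\le\bar V/10$ renders painless.
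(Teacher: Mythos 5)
Your proof is correct and follows the same route as the paper: normalize $\|\vp\|=1$, set $\chi=\min(h_K,1)$ and $f=\chi e^{h_K/2}\vp$, use identity \eqref{eq:agmonidentity} (Lemma~\ref{lem:eigenidentity}) with $h_K/2$ to obtain $\int u^2|\nabla_A(f/u)|^2\le 2e\bar V$ and $\int f^2\le\frac{8e\bar V}{3\delta}$, then control $\int|\nabla_A f|^2$ by testing $Lu=1$ against $f^2/u$, and finally unpack $e^{h_K}|\nabla_A\vp|^2$ on $\{h_K\ge1\}$. The one genuine divergence is in the middle step: where the paper's proof of Theorem~\ref{thm:expdecay1} routes the estimate through $\int|\nabla_A u|^2(f/u)^2$ using Young's inequality (picking up factors of $2$, $4$, and $10$ along the way), you invoke the exact pointwise identity $|\nabla_A f|^2=\nabla_A u\cdot\nabla_A(f^2/u)+u^2|\nabla_A(f/u)|^2$ — the same algebraic identity already used inside the proof of Lemma~\ref{lem:identity} — so that the $L u=1$ test alone gives $\int|\nabla_A f|^2\le\bar V\int f^2+\int u^2|\nabla_A(f/u)|^2$ with no loss. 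This is cleaner, avoids the slightly opaque Young bookkeeping, and yields a constant ($\tfrac{28e\bar V^2}{3\delta}+4e\bar V\lesssim 10e\bar V^2/\delta$) comfortably below the stated $18e\bar V^2/\delta$. Your explicit handling of the $\mu\le\bar\mu$ mismatch — keeping the nonnegative extra term $\tfrac14(\bar\mu-\mu)$ and observing that $f$ vanishing on $E(\bar\mu+\delta)$ ensures coercivity $\ge\tfrac34\delta$ on the support of $f$ — is also more transparent than the paper's one-line monotonicity remark, and you correctly identify it as the point requiring care. Minor remark: the parenthetical ``equivalently, Lemma~\ref{lem:identity} for $f$ combined with the $f^2/u$ test of $Lu=1$'' conflates a pointwise algebraic identity with its integrated form after testing; both are true, but only the pointwise product rule is truly an identity, while the integrated version requires the weak equation.
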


In particular, in the case $K=\emptyset$, the corollary says that for eigenfunctions $\psi$ satisfying $L\psi = \lambda\psi$ weakly  on all of $M$ for which $\lambda \le \bar \mu$, we have 
\begin{equation}\label{eq:expdecay.psi}
\int_{h\ge 1} e^{h}
\left(|\nabla_A\psi|^2 + \bar V \psi^2\right)\, m \, dx  
\le 18e\left(\frac{\bar V}{\delta}\right)
\bar V \int_M \psi^2 \, m \, dx. 
\end{equation}
with 
\[
h(x) = \bar \rho (x,E(\bar \mu + \delta)), \quad x\in M.
\]

\begin{proof}  Corollary~\ref{cor:expdecay} is not, strictly speaking, a corollary of Theorem~\ref{thm:expdecay1}, but rather the specialization of the inequalities in the proof to the case $\alpha=1/2$. Note also the theorem is proved for $\mu = \bar \mu$, but the corollary is also valid for any larger value of $\bar \mu$. This because increasing $\bar \mu$ gives rise to a weaker conclusion: it decreases $h_K$.

Rather than repeat the proof, we indicate briefly the arithmetic that ensues from setting $\alpha = 1/2$ in the proof of Theorem~\ref{thm:expdecay1}. With $f = \chi e^{h_K/2} \vp$ and the normalization $\|\vp\|=1$, we have
\[
\int_{\{h_K\ge 1\}} e^{h_K} \bar V \vp^2\, m \, dx   \le 
\bar V \int_M f^2 \, m \, dx  
\le \frac{8e}{3} \frac{\bar V^2}{\delta},
\]
as in the second line of \eqref{3.13a},
\[
\int_M |\nabla_A f|^2 \, m \, dx  
\le \left(20 + \frac{3\bar V}{\delta}\right)  e\bar V\,  ,
\]
by the proof of \eqref{3.14a}, and (as for \eqref{3.15a})
\[
\int_{\{h_K\ge 1\}}    e^{h_K} |\nabla_A \vp|^2\, m \, dx  
 \le \left(40 + \frac{34\bar V}{3\delta}\right) e\bar V  \, .
\]
Therefore, again with the normalization $\|\vp \|=1$, 
\[
\int_{\{ h_K\ge 1\}} 
e^{h_K} (|\nabla_A \vp|^2 + \bar V \vp^2) \, m \, dx  
\le \left(40 + 14 \frac{\bar V}{\delta} \right) 
e \bar V \le 18e \left(\frac{\bar V}{\delta} \right) \bar V,
\]
where we have used $\delta \le \bar V /10$ to obtain the last inequality.
\end{proof}

\section{Localized approximate eigenfunctions}\label{sec:diagonalization}

We have already proved a theorem about exponential decay of the eigenfunctions $\psi$. We will now show, roughly speaking, that if the landscape function predicts localization, then an eigenfunction with eigenvalue $\lambda$ is localized in the components of $\{1/u \le \lambda\}$ where an appropriate localized problem has an eigenvalue in the range $\lambda \pm \delta$. 

Let $\bar\mu$ and $\delta$ be as in Corollary~\ref{cor:expdecay}. Consider any finite decomposition of the sublevel set $E(\bar \mu + \delta)$ into subsets:
\[
E(\bar \mu + \delta) = \{x\in M: \frac1{u(x)} \le \bar \mu  + \delta\} = \bigcup_{\ell = 1}^R E_\ell \, .
\]
We regard the sets $E_\ell$ as potential wells. It is easiest to visualize $E_\ell$ as the (closed) connected components of $E(\bar \mu + \delta)$. In practice, such connected wells often yield the optimal result. But there is no requirement that $E_\ell$ be connected. Rather each $E_\ell$ should be chosen to consist of a collection of ``nearby" wells. It is occasionally useful to merge nearby wells because what is important is to choose the sets $E_\ell$ so as to have a large separation between them, where the separation $\bar S$ is defined by 
\[
\bar S  =  \inf\ \{\bar \rho(x,y): x\in E_\ell, \ y\in E_{\ell'}, \ \ell \neq \ell'\}, 
\]
i.e., the smallest effective distance between wells.  Here, as before, $\bar \rho = \rho_{\bar \mu}$ denotes the Agmon metric associated to the weight $\bar w(x) = (1/u(x) - \bar \mu)_+$. Whether or not a decomposition into small, well-separated wells exists depends on the level set structure of $1/u(x)$ and the size of $\bar \mu + \delta$.

Let $S_1< \bar S$ (as near to $\bar S$ as we like). 
We claim that there is a compact set $K_\ell\subset M = \bar \Omega$ 
satisfying the hypothesis of Proposition \ref{prop:phi.j.Holder}
and such that 
\begin{equation}\label{4.1a}
\{x\in M: \bar \rho(x,E_\ell) \ge \bar S/2\} 
\ \subset \,K_\ell \ \subset \{x\in M: \bar \rho(x,E_\ell) >  S_1/2 \}.
\end{equation}
In fact, as we will show in Lemma \ref{lem:cubecover}, 
for any compact $K\subset M$ and any neighborhood $U\supset K$
(that is, $U$ is relatively open in $M$) 
there is an intermediate set $K\subset K' \subset U$
such that $K'$ satisfies the bi-Lipschitz cone condition.

Define $\Omega_\ell$ as the connected component of $M\setminus K_\ell$ 
containing $E_\ell$. 
Because the sets $E_\ell$ are at least distance $\bar S$ apart, 
the sets $\Omega_\ell$ are disjoint.

Denote by $W_0^{1,2}(\Omega_\ell)$ the closure in $W^{1,2}(M)$ norm of the space of smooth functions that are compactly supported on $\Omega_\ell$. Note that these functions can be extended by zero on $M \setminus \Omega_\ell$ and regarded as belonging to $W^{1,2}(M)$. But the notation is slightly misleading, because $\Omega_\ell$ is not necessarily open, and may contain parts of $\d M$ that do not lie in $K_\ell$. On those parts, functions of $W_0^{1,2}(\Omega_\ell)$ do not need to vanish. In other words, our definition of $W_0^{1,2}(\Omega_\ell)$ includes a Dirichlet condition on $K_\ell\cap \partial \Omega_\ell$ only.

The operator $L$ is self-adjoint with our mixture of Dirichlet and Neumann conditions, and for each $\ell$ there a complete system of orthonormal eigenfunctions $\vp_{\ell,j}\in W^{1,2}_0(\Omega_\ell) $ satisfying
\[
\int_M [\nabla_A \vp_{\ell,j}  \cdot \nabla \zeta + V\vp_{\ell,j}  \zeta]\, m \, dx
  = \mu_{\ell,j} \int_M \vp_{\ell,j} \zeta \, m \, dx  
\]
for all test functions $\zeta$ in $W^{1,2}_0(\Omega_\ell) $. We have Dirichlet conditions on $K_\ell \cap \partial \Omega_\ell$. If $\partial \Omega_\ell \cap \partial M $ is non-empty, then on that portion of the boundary, the weak equation is interpreted as a Neumann condition. But we will never have to use normal derivatives, only the weak equation. The purpose of inserting the somewhat nicer domain $\Omega_\ell$ is so that the eigenfunctions $\vp_{\ell,j}$ are continuous (in fact H\"older continuous) on $M$.  We do this so that the integrals in the lemmas above are well defined.  None of our inequalities with exponential weights depend on the Lipschitz constant of $\Omega_\ell$, just as they don't depend on the ellipticity constant or modulus of continuity of $A$. 

Let $\psi_j$ denote the complete system of orthonormal eigenfunctions of $L$ on $M$ with eigenvalues $\lambda_j$. Let $\Psi_{(a,b)}$ denote the orthogonal projection in $L^2(M, m\,dx)$ onto the span of eigenvectors $\psi_j$ with eigenvalue $\lambda_j \in (a,b)$. Let $\Phi_{(a,b)}$ be the orthogonal projection onto the span of the eigenvectors $\vp_{\ell,j}$ with eigenvalue $\mu_{\ell,j} \in (a,b)$.  Thus the range of $\Phi_{(0,\infty)}$ is 
the subspace of $L^2(M,m\, dx)$ of functions supported on $\displaystyle \cup_\ell \Omega_\ell$.
\begin{theorem} \label{thm:spectralprojection} 
Let $0 < \delta \le \bar V/10$. If $\vp$ is one of the $\vp_{\ell,j}$ with eigenvalue $\mu = \mu_{\ell,j}$ and $\mu \le \bar \mu - \delta$, and $\bar S$ is the effective distance separating wells, defined above, then 
\[
\| \vp - \Psi_{(\mu-\delta, \mu + \delta)}\vp \|^2 \le 300 \left(\frac{\bar V}{\delta}\right)^3 e^{-\bar S/2} \|\vp||^2,
\]
where here and below, $\| \cdot \|$ denotes the norm in $L^2(M, m\,dx)$. 
If $\psi$ is one of the $\psi_{j}$ with eigenvalue $\lambda  = \lambda_j\le \bar \mu - \delta$, then 
\[
\| \psi - \Phi_{(\lambda-\delta, \lambda+ \delta)}\psi \|^2
\le 300 \left(\frac{\bar V}{\delta}\right)^3 e^{-\bar S/2} \|\psi\|^2. 
\]
\end{theorem}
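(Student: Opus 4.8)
The plan is to treat the two inequalities symmetrically, each as a perturbation statement of the form ``$\vp$ is almost orthogonal to every eigenfunction whose eigenvalue is outside the band $(\mu-\delta,\mu+\delta)$.'' Concretely, for the first inequality, let $\psi=\psi_j$ be an eigenfunction of $L$ on $M$ with $|\lambda_j-\mu|\ge\delta$; it suffices to bound $\sum_j |\langle \vp,\psi_j\rangle|^2$ over such $j$, since $\|\vp-\Psi_{(\mu-\delta,\mu+\delta)}\vp\|^2$ is exactly that sum. The key quantity is the ``error'' obtained by testing the weak equation for $\vp$ (which holds only on $M\setminus K_\ell$) against $\psi_j$: because $\vp$ satisfies $L\vp=\mu\vp$ only away from $K_\ell$ and vanishes on $K_\ell$, while $\psi_j$ satisfies $L\psi_j=\lambda_j\psi_j$ on all of $M$, one gets
\[
(\lambda_j-\mu)\langle\vp,\psi_j\rangle = r(\psi_j),
\]
where $r$ is a linear functional supported, morally, in the transition region $\{S_1/2<\bar\rho(\cdot,E_\ell)<\bar S/2\}$ — i.e.\ precisely where Corollary~\ref{cor:expdecay} gives exponential smallness $e^{-\bar S/2}$ (applied with $K=K_\ell$, so that $h_{K_\ell}\ge \bar S/2 - S_1/2$ on the relevant set, and $S_1\nearrow\bar S$). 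Since $|\lambda_j-\mu|\ge\delta$, this yields $|\langle\vp,\psi_j\rangle|\le \delta^{-1}|r(\psi_j)|$.

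The next step is to sum: $\sum_j|\langle\vp,\psi_j\rangle|^2 \le \delta^{-2}\sum_j |r(\psi_j)|^2$. To control the right side I would realize $r$ as pairing against a fixed element of the dual of $W^{1,2}(M)$ — this is the ``dual space estimate'' the introduction advertises (cf.\ the reference to \eqref{eq:error.r}). Writing $r(\eta)=\int_M [\nabla_A(\text{something})\cdot\nabla_A\eta + (\cdots)\eta]\,m\,dx$ with the ``something'' built from $\vp$ times a cutoff that is $1$ near $E_\ell$ and $0$ outside $\Omega_\ell$, the $W^{1,2}\to W^{-1,2}$ duality together with $\sum_j\langle \eta,\psi_j\rangle\psi_j = \eta$ and the spectral calculus for $L$ (the $\psi_j$ diagonalize the $W^{1,2}$ inner product up to the factor $\lambda_j/(\lambda_j+1)$, comparable to a constant once $\lambda_j$ is bounded below away from $0$, which it is since the $\lambda_j$ are strictly positive) converts $\sum_j|r(\psi_j)|^2$ into $\|r\|_{W^{-1,2}}^2$ up to absolute constants. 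Then $\|r\|_{W^{-1,2}}^2$ is estimated by the gradient-plus-mass integral of $\vp$ over the transition annulus, which Corollary~\ref{cor:expdecay} bounds by $18e(\bar V/\delta)\bar V\, e^{-\bar S/2}\|\vp\|^2$ (the commutator terms from the cutoff produce $|\nabla_A(\text{cutoff})|^2\le\bar w\le\bar V$, hence the extra power of $\bar V/\delta$, landing at $(\bar V/\delta)^3$ overall after the $\delta^{-2}$ out front and one more $\bar V/\delta$ from the Agmon bound). The second inequality is the mirror image: now $\psi$ is a genuine eigenfunction, $\vp_{\ell,j}$ the localized ones, the roles of ``equation on all of $M$'' versus ``equation off $K_\ell$'' are swapped, and one uses the exponential decay of the true eigenfunction $\psi$ (the $K=\emptyset$ case, \eqref{eq:expdecay.psi}) away from $E(\bar\mu+\delta)$ together with the decay of the $\vp_{\ell,j}$; the same duality-plus-spectral-sum machinery applies verbatim.

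The main obstacle — and the step I would be most careful with — is making the ``dual space'' bookkeeping rigorous: identifying the correct linear functional $r$, checking that its natural $W^{-1,2}$ norm really is controlled only by integrals of $\vp$ (or $\psi$) over the transition region where exponential smallness is available, and verifying that the passage from $\sum_j |r(\psi_j)|^2$ to $\|r\|_{W^{-1,2}}^2$ costs only an absolute constant (this is where one must use that the first eigenvalue $\lambda_1>0$ is bounded below, so that the Sobolev norm and the form norm are genuinely comparable with constants independent of $T$). A secondary technical point is handling near-degeneracy: eigenfunctions of $L$ with eigenvalue in $(\mu-\delta,\mu+\delta)$ contribute to $\Psi_{(\mu-\delta,\mu+\delta)}\vp$ and must simply be discarded from the sum — the estimate only ever sees $|\lambda_j-\mu|\ge\delta$ — so no resonance issue arises at this stage; it is deferred, as the introduction notes, to the rank-one discussion. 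Finally one must track constants honestly through the three applications of Cauchy–Schwarz and the two applications of Corollary~\ref{cor:expdecay} to arrive at the stated $300(\bar V/\delta)^3$; I expect this to be routine but tedious, exactly as the $18e$ and $450+130\bar V/((1-\alpha)\delta)$ arithmetic was in Section~\ref{sec:agmon}.
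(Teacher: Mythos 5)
Your strategy is essentially the paper's: replace $\vp$ by $\eta\vp$ for a cutoff $\eta$ supported in $\Omega_\ell$, let $r$ be the resulting error in the eigenequation, estimate $r$ in a dual ($W^{-1,2}$-type) norm using the transition-region smallness of Corollary~\ref{cor:expdecay}, and convert the dual bound into a bound on the off-band Fourier coefficients. You correctly identify the dual-space bookkeeping as the delicate step, but the specific accounting you propose does not go through.

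The problem is the claimed passage from $\sum_j|r(\psi_j)|^2$ to $\|r\|_{W^{-1,2}}^2$ ``up to an absolute constant.'' For an $L^2$-orthonormal eigenbasis $\psi_j$, the squared dual norm of $r$ is comparable to the \emph{weighted} sum $\sum_j (\lambda_j+\bar V)^{-1}|r(\psi_j)|^2$, not the unweighted sum. In this problem $r(\zeta)$ contains the term $\int_M \vp\,\nabla_A\eta\cdot\nabla_A\zeta\,m\,dx$, so $r$ is a genuine first-order distribution, $|r(\psi_j)|$ grows like $\lambda_j^{1/2}$, and $\sum_j|r(\psi_j)|^2$ is typically infinite; the positivity of $\lambda_1$ is irrelevant to this. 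The repair — and it is exactly what the paper's choice $\gamma_j=\beta_j(\lambda_j+\bar V)^{-1/2}\mathrm{sgn}(\lambda_j-\mu)$ implements — is to keep the weight in place. From $\beta_j=r(\psi_j)/(\lambda_j-\mu)$ write
\[
\beta_j^2=\frac{\lambda_j+\bar V}{(\lambda_j-\mu)^2}\cdot\frac{|r(\psi_j)|^2}{\lambda_j+\bar V},
\]
and note that on $\{\lambda_j\ge0,\ |\lambda_j-\mu|\ge\delta\}$ the prefactor is bounded by about $2\bar V/\delta^2$ (it is decreasing in $|\lambda_j-\mu|$, worst at $\lambda_j=\mu\pm\delta$, and decays at infinity); the second factor sums to $\|r\|^2$. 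This is where the factor $\bar V/\delta^2$ — not the naive $1/\delta^2$ — enters, and it meshes with the $\bar V/\delta$ from Corollary~\ref{cor:expdecay} to give $(\bar V/\delta)^3$. Two smaller slips: your first display should read $(\lambda_j-\mu)\langle\eta\vp,\psi_j\rangle=r(\psi_j)$ rather than $\langle\vp,\psi_j\rangle$, since $\psi_j$ is not an admissible test function for the equation on $\Omega_\ell$; and since the bound therefore controls $\|\eta\vp-\Psi(\eta\vp)\|$, you must separately estimate $\|(1-\eta)\vp\|$ (again from Corollary~\ref{cor:expdecay}) and add it at the end, a step you do not mention.
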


Note that this theorem only has content if $\bar S$ is sufficiently large  that
\[
\frac {\bar V}{\delta}  \ll e^{\bar S/6}.
\]
The separation $\bar S$ increases as $\bar\mu$ decreases.  Recall, also, that we have the flexibility to choose the sets 
$E_\ell$ so as to merge nearby wells that are not sufficiently separated. It turns out that the partition into well-separated wells does occur with high probability for many classes of random potentials $V$. 

\begin{proof}   Here and in the remainder of the paper all eigenfunctions are normalized to have $L^2(m\,dx)$ norm 1. 
Consider $\vp$ such that $L\vp = \mu \vp$ in the weak sense on $\Omega_\ell$. Let 
\[
\eta(x) = f(\bar \rho(x,E_\ell))
\]
be defined by 
\[
f(t) = \begin{cases} 
 1, & \quad t\le \frac{S_1}2 -1 \\
 \frac{S_1}2 - t, & \quad  \frac{S_1}2 - 1 \le t \le \frac{S_1}2 \\
 0, & \quad \frac{S_1}2 \le t
 \end{cases}
 \]
 Let $r$ be the distribution satisfying the equation 
\[
L(\eta \vp) = \mu \eta \vp + r
\]
in the weak sense on $M$. In other words, $r$ is defined by
\[
r(\zeta) := 
\int_M [\nabla_A(\eta\vp)\cdot \nabla_A \zeta + (V-\mu) \eta\vp \zeta] \, m \, dx  
\]
for all $\zeta$ smooth functions on $M$. Since $\eta \zeta$ is a suitable test function for $L\vp =\mu \vp$ in $\Omega_\ell$, we have
\[
\int_M [\nabla_A(\vp)\cdot \nabla_A (\eta \zeta) + (V-\mu) \eta\vp \zeta ] 
\, m \, dx  = 0\, .
\]
Subtracting this formula from the previous one for $r$, we find that
\[
r(\zeta) 
= \int_M [\vp \nabla_A \eta \cdot \nabla_A \zeta - \zeta \nabla_A\vp \cdot \nabla_A \eta] \, m \, dx. 
\]
Observe that if $\nabla_A \eta(x) \neq 0$, then $\frac{S_1}2 - 1 \le \bar\rho(x,E_\ell) \le \frac{S_1}2$.  Furthermore, since the distance from $E_\ell$ to $E_{\ell'}$,
$\ell'\neq \ell$,  is greater than $S_1$,  $\bar\rho(x,E_{\ell'}) \ge S_1/2$. 
Thus, since $E(\bar \mu + \delta) =  \bigcup_{\ell = 1}^R E_\ell$, we have
$\bar\rho(x,E(\bar \mu + \delta) ) \ge S_1/2 - 1$.  In particular, for any 
set $K$, $\nabla_A \eta(x)\neq 0$ implies
\[
h_K(x) = \bar\rho(x,E_{\bar\mu + \delta}\sm K) \ge \frac{S_1}2 - 1.
\]
We use this, \eqref{eq:expdecay.phi} with $K = M\setminus \Omega_\ell$, 
and $|\nabla_A \eta|^2 \le \bar V$ to obtain (recall the normalization $\|\vp\|=1$)
\begin{align*}
|r&(\zeta)| 
\le 
(\sup |\nabla_A \eta|)\|\nabla_A \zeta\|  
\Big(\int_{\{\nabla_A \eta\neq 0\}} \vp^2\, m \, dx  \Big)^{1/2} 
\\ &\hskip 4.6cm 
+(\sup |\nabla_A \eta|)\|\zeta\|  
\Big(\int_{\{\nabla_A \eta\neq 0\}} |\nabla_A \vp|^2\, m \, dx  \Big)^{1/2}  \\
&
\le
\|\nabla_A \zeta\|  
\Big(\int_{\{\nabla_A \eta\neq 0\}} \bar V\vp^2 \, m \, dx  \Big)^{1/2} 
+ 
\bar V^{1/2}\|\zeta\|  
\Big(\int_{\{\nabla_A \eta\neq 0\}} |\nabla_A \vp|^2\, m \, dx  \Big)^{1/2}  
\\
& \le 
(\|\nabla_A \zeta\|^2+ \bar V \|\zeta\|^2)^{1/2} 
\left(\frac{18e^2 \bar V^2}{\delta e^{S_1/2}}\right)^{1/2}.
\end{align*}
We will abbreviate this inequality by 
\begin{equation} \label{eq:error.r}
 r(\zeta)^2  
\le \eps \bar V [\|\nabla_A\zeta \|^2 + \bar V \|\zeta\|^2 ], \quad 
\eps : = 18e^2 \frac{\bar V}{\delta} e^{-S_1/2}.
\end{equation}

Since $V(x) \ge 0$, 
\[
\|\nabla_A\psi_j\|^2 + \bar V \|\psi_j\|^2 \le
\int_M [|\nabla_A\psi_j|^2 + (V + \bar V) \psi_j^2]\, m \, dx  
 = \lambda_{j} + \bar V.
\]
Let $J$ be any finite list of indices $j$ such that $|\lambda_j -\mu| \ge \delta $ and let
\[
\zeta = \sum_{j\in J} \gamma_j \psi_j 
\]
be any linear combination of the $\psi_j$. By density considerations, such a $\zeta$ is admissible. Then, since $V\ge 0$, 
\[
\|\nabla_A\zeta\|^2 + \bar V \|\zeta \|^2 
\le \int_M [|\nabla_A\zeta |^2 + (V + \bar V) \zeta^2] \, m \, dx
= \sum_{j\in J} (\lambda_j + \bar V)\gamma_j^2 \, .
\]
Consequently, it follows from \eqref{eq:error.r} that
\[
 r(\zeta)^2  
\le \eps \bar V [\|\nabla_A\zeta \|^2 + \bar V \|\zeta\|^2 ]
\le  \eps \bar V\sum_{j\in J} (\lambda_j + \bar V)\gamma_j^2 \, .
\]
Denote by 
\[
\beta_j = \int_M \eta\vp \psi_j \, m \, dx = \langle \eta \vp, \psi_j \rangle
\]
the coefficients of $\eta\vp$ in the basis. Because $(L -\lambda_j)\psi_j = 0$ in the weak sense,
\begin{align*}
r(\zeta)  
&= 
\sum_{j\in J} \gamma_j \int_M[\nabla_A\psi_j \nabla_A (\eta\vp) + (V-\lambda_j)\psi_j \eta \vp]\, m \, dx  
\\
& \hspace{130pt} + \int_M \gamma_j(\lambda_j-\mu) \psi_j \eta \vp \, m \, dx  
\\
& 
=\sum_{j\in J}  \gamma_j (\lambda_j-\mu) \beta_j \, .
\end{align*}
Thus,
\[
\Big|\sum_{j\in J}  \gamma_j (\lambda_j-\mu) \beta_j \Big|^2 = r(\zeta)^2 
\le  \eps \bar V\sum_{j\in J} (\lambda_j + \bar V)\gamma_j^2\, .
\]
Setting $\gamma_j = \beta_j (\lambda_j + \bar V)^{-1/2} \mbox{sgn}(\lambda_j - \mu)$, we find that 
\[
\left|\sum_{j\in J}  
\frac{|\lambda_j-\mu|}{\sqrt{\lambda_j + \bar V}} \beta_j^2 \right|^2 
\le  \eps \bar V\sum_{j\in J} \beta_j^2\, .
\]
Since $\lambda_j \ge0$ and $|\lambda_j -\mu| \ge \delta $, 
\[
\frac{|\lambda_j-\mu|}{\sqrt{\lambda_j + \bar V}} \ge \frac{\delta}{\sqrt{\bar V}} \, .
\]
Therefore,
\[
\sum_{j\in J} \beta_j^2 \le \eps \frac{\bar V^2}{\delta^2} .
\]
Since the set $J$ is an arbitrary finite subset of $j$ such that $|\lambda_j -\mu|\ge \delta$, we have 
\[
\| \eta \vp - \Psi_{(\mu -\delta, \mu  + \delta)} (\eta\vp)\|^2 \le \eps \frac{\bar V^2}{\delta^2}  \, .
\]
Next,  it follows from \eqref{eq:expdecay.phi} and $1-\eta(x) = 0$ on $\{\,\bar \rho(x,E_\ell) \le S_1/2 - 1\,\}$  that 
\[
\bar V\|(1-\eta)\vp\|^2  
\le \bar V \int_{h_K \ge \frac{S_1}2 -1} \, \vp^2 \, m \, dx  
\le \eps \bar V \quad (K = M\setminus \Omega_\ell),
\]
which, since the projection $I- \Psi_{(\mu -\delta, \mu  + \delta)}$ has operator norm $1$, implies that
\[
\| (1-\eta) \vp - \Psi_{(\mu -\delta, \mu  + \delta)} ((1-\eta)\vp)\|^2 
 \le  \eps.
\]
Finally, adding the bounds for $\vp = (1-\eta)\vp + \eta\vp$ and using $\delta \le \bar V/10$, we get
\[
\| \vp - \Psi_{(\mu -\delta, \mu  + \delta)} \vp\|^2 
\le 2 \eps \frac{\bar V^2}{\delta^2}
+ 2\eps   < 300 \frac{\bar V^3}{\delta^3} e^{-\bar S/2} \, .
\]
This is the first claim of the theorem (recall the normalization $\|\vp\| = 1$).

The second claim has a similar proof with the roles of $\vp$ and $\psi$ reversed. We will sketch each step, but the reader will need to refer regularly to the previous proof. Let $\psi $ be a normalized eigenfunction of $L$ on $M$ with eigenvalue $\lambda \le \bar \mu$. We use the same cutoff functions 
\[
\eta_\ell(x) = f(\bar \rho(x, E_\ell)),
\]
introducing the subscript $\ell$ since $\ell$ is no longer fixed. Then define
\[
\tilde \eta = \sum_\ell \eta_\ell.
\]
Note that $\tilde\eta \psi$ is compactly supported in the union of the $\Omega_\ell$, and the $\Omega_\ell$ are disjoint. Define the distribution $\tilde r$ by the equation
\[
L(\tilde \eta \psi) = \lambda \tilde \eta \psi + \tilde r.
\]
By similar reasoning to the proof of the first claim, using \eqref{eq:expdecay.psi} we have the analogue of \eqref{eq:error.r}, that for all $\zeta \in W^{1,2}(M)$, 
\[
\tilde r(\zeta)^2 \le \eps \bar V [\|\nabla_A \zeta\|^2 + \bar V\| \zeta\|^2],
\quad
\eps = 18 e^2 \frac{\bar V}{\delta} e^{-S_1/2}.
\]
Take any finite set $\tilde J$ of indices $(\ell, j)$ and denote
\[
\tilde \zeta = \sum_{(\ell,j)\in \tilde J} \gamma_{\ell j} \vp_{\ell j} \, .
\]
In the same way as before, we deduce
\[
\tilde r(\tilde \zeta)^2 
\le \eps \bar V \sum_{\tilde J} (\mu_{\ell j} + \bar V) \gamma_{\ell j}^2.
\]
Moreover, as before, if we define
\[
\beta_{\ell j} = \int_M \tilde \eta\psi \vp_{\ell j} \, m \, dx.
\]
We claim that 
\[
\tilde r (\tilde \zeta) = \sum_{\tilde J} \gamma_{\ell j} (\mu_{\ell j} - \lambda) \beta_{\ell j}.
\]
This last identity is the only place where the proof is slightly different. Observe that because $(L - \mu_{\ell j} )\vp_{\ell j}=0$ in the weak sense on $\Omega_\ell$ and $\eta_{\ell'}$ has support disjoint from $\bar \Omega_\ell$ for all $\ell'\neq \ell$, 
\begin{align*}
\int_M 
[\nabla_A (\tilde \eta \psi)  \nabla_A \vp_{\ell j} + & 
(V-\mu_{\ell j})\tilde \eta \psi \vp_{\ell j}]
\, m \, dx  \\
& =
\int_{\Omega_\ell} 
[\nabla_A ( \eta_\ell \psi) \nabla_A \vp_{\ell j} + (V-\mu_{\ell j})\eta_\ell \psi \vp_{\ell j}] 
\, m \, dx  = 0.
\end{align*}
This is the only aspect of the proof of the formula for $\tilde r(\tilde \zeta)$ that differs from the one for $r(\zeta)$ above.

Now suppose that for every $(\ell,j)\in \tilde J$, $|\mu_{\ell j} - \lambda| \ge \delta $. Then, setting 
\[
\gamma_{\ell j} = \beta_{\ell j} (\mu_{\ell j} +\bar V)^{1/2} \mbox{sgn}(\mu_{\ell j} - \lambda),
\]
we obtain 
\[
\sum_{\tilde J} \beta_{\ell j}^2 \le \eps \frac{\bar V^2}{\delta^2}.
\]
Since $\tilde\eta \psi$ is supported in the union $\displaystyle \bigcup_\ell  \Omega_\ell$ and the $\vp_{\ell j}$ are an orthonormal basis for $L^2$ on that set, and $L$ is an arbitrary finite subset of indices such that $|\mu_{\ell j} - \lambda| \ge \delta$, we have
\[
\| \tilde\eta \psi 
- \Phi_{(\lambda-\delta, \lambda + \delta)} 
(\tilde\eta \psi )\|^2 
\le \eps \frac{\bar V^2}{\delta^2}.
\]
Next, it follows from the fact that $(1-\tilde \eta(x)) = 0$ on the set where $h(x)  = \bar \rho(x,E(\bar \mu + \delta)) \le S_1/2 - 1$ and \eqref{eq:expdecay.psi} that 
\[
\bar V\|(1-\tilde \eta)\psi\|^2  \le \bar V \int_{h \ge \frac{S_1}2 -1} \, \psi^2 
\, m \, dx  \le \eps \bar V.
\]
The rest of the proof is similar.
\end{proof}

Theorem~\ref{thm:spectralprojection} shows that when the landscape potential $1/u(x)$ defines wells that are separated by a large number $S$, then the eigenfunctions are located in these wells (with a single eigenfunction possibly occupying several wells). An easy consequence is the following corollary saying that the graphs of the two counting functions enumerating eigenvalues of $L$ and eigenvalues localized to wells agree (modulo a shift $\pm \delta$) up to a number $\bar N$ defined below.
\begin{corollary} \label{C4.3}
Consider the counting functions
\[
N(\lambda) = \# \{ \lambda_j:\lambda_j\le \lambda\};
\quad
N_0(\mu) = \# \{ \mu_{\ell,j} :  \mu_{\ell,j}  \le \mu\}.
\]
Recall that $\bar \mu$ and $\delta$ are used to specify $\bar S$. Suppose that $\mu\le \bar \mu$ and choose $\bar N$ such that 
\[
300 \bar N \left(\frac {\bar V}{\delta}\right)^3 <  e^{\bar S/2}. 
\]
Then 
\[
\min(\bar N, N_0(\mu - \delta)) \le N(\mu) \quad \mbox{and}
\quad 
\min(\bar N, N(\mu-\delta)) \le N_0(\mu) .
\]
\end{corollary}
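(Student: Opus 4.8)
The plan is to reduce the whole statement to one elementary fact in linear algebra and then invoke Theorem~\ref{thm:spectralprojection} twice, with the roles of the eigenfunctions $\{\psi_j\}$ of $L$ on $M$ and the localized eigenfunctions $\{\vp_{\ell,j}\}$ interchanged. The fact I would isolate first is: \emph{if $\vp_1,\dots,\vp_k$ are orthonormal in a Hilbert space $H$, $W\subset H$ is a subspace, and the orthogonal projection $P_W$ satisfies $\|\vp_i-P_W\vp_i\|^2\le\theta$ for every $i$, with $k\theta<1$, then $\dim W\ge k$.} The proof is immediate: if $\dim W<k$ the vectors $P_W\vp_1,\dots,P_W\vp_k$ are linearly dependent, so one can choose scalars $c_i$ with $\sum_i c_i^2=1$ and $\sum_i c_i P_W\vp_i=0$; then, by the triangle inequality and Cauchy--Schwarz, $\bigl\|\sum_i c_i\vp_i\bigr\|=\bigl\|\sum_i c_i(\vp_i-P_W\vp_i)\bigr\|\le\bigl(\sum_i c_i^2\bigr)^{1/2}\bigl(\sum_i\|\vp_i-P_W\vp_i\|^2\bigr)^{1/2}\le(k\theta)^{1/2}<1$, contradicting $\bigl\|\sum_i c_i\vp_i\bigr\|^2=\sum_i c_i^2=1$ by orthonormality.

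Next I would set $\theta:=300(\bar V/\delta)^3e^{-\bar S/2}$, so that the hypothesis of the corollary is exactly $\bar N\theta<1$, and prove the first inequality. Put $k:=\min(\bar N,N_0(\mu-\delta))$ and let $\vp_1,\dots,\vp_k$ be localized eigenfunctions chosen among the $\vp_{\ell,j}$ with eigenvalues $\mu_i\le\mu-\delta$; since $\mu\le\bar\mu$ these satisfy $\mu_i\le\bar\mu-\delta$, so the second assertion of Theorem~\ref{thm:spectralprojection} applies and gives $\|\vp_i-\Psi_{(\mu_i-\delta,\mu_i+\delta)}\vp_i\|^2\le\theta$. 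Because $\mu_i+\delta\le\mu$, the projected vector $\Psi_{(\mu_i-\delta,\mu_i+\delta)}\vp_i$ lies in $W:=\operatorname{span}\{\psi_j:\lambda_j\le\mu\}$, a space of dimension $N(\mu)$. The $\vp_i$ are orthonormal in $L^2(M,m\,dx)$ (the $\Omega_\ell$ are disjoint, and the $\vp_{\ell,j}$ are orthonormal within each $\Omega_\ell$), and $k\theta\le\bar N\theta<1$, so the lemma gives $N(\mu)=\dim W\ge k$, i.e.\ $\min(\bar N,N_0(\mu-\delta))\le N(\mu)$.

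Finally I would run the symmetric argument for the second inequality: with $k:=\min(\bar N,N(\mu-\delta))$, take eigenfunctions $\psi_1,\dots,\psi_k$ of $L$ on $M$ with $\lambda_i\le\mu-\delta\le\bar\mu-\delta$ (orthonormal), apply the first assertion of Theorem~\ref{thm:spectralprojection} to get $\|\psi_i-\Phi_{(\lambda_i-\delta,\lambda_i+\delta)}\psi_i\|^2\le\theta$, observe that $\lambda_i+\delta\le\mu$ places each projection in $\operatorname{span}\{\vp_{\ell,j}:\mu_{\ell,j}\le\mu\}$, which has dimension $N_0(\mu)$, and conclude $N_0(\mu)\ge k$ from the lemma. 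I do not expect a genuine obstacle here: the corollary is essentially a repackaging of Theorem~\ref{thm:spectralprojection} through the linear-algebra lemma, and the only points requiring care are the index bookkeeping that $\mu_i+\delta\le\mu$ and $\lambda_i+\delta\le\mu$ put the projected vectors into the intended subspaces, the verification that the relevant eigenvalues satisfy the $\le\bar\mu-\delta$ hypothesis of Theorem~\ref{thm:spectralprojection}, and noting that $N(\mu)$ and $N_0(\mu)$ are finite (there are finitely many wells $\Omega_\ell$ and each associated operator has compact resolvent).
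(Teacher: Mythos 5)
Your proposal is correct and follows essentially the same route as the paper: both apply Theorem~\ref{thm:spectralprojection} to the first $k=\min(\bar N,\cdot)$ eigenfunctions, use orthonormality plus Cauchy--Schwarz to control the error on linear combinations, and conclude with a dimension count. The only cosmetic difference is that you isolate the dimension-count step as a standalone linear-algebra lemma, whereas the paper phrases it inline as injectivity of the restricted projection $\Phi_{(0,\mu)}$ on the span $Q$; these are the same argument.
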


\begin{proof}  Let 
\[
p = \min(\bar N, N(\mu-\delta))
\]
Consider the first $p$ eigenvectors $\psi_1$, \dots $\psi_p$ of $L$ on $M$. Then  $p\le N(\mu -\delta)$ implies $\lambda_j \le \mu -\delta$, and therefore
\[
\| \psi_j - \Phi_{(0,\mu)} \psi_j\|^2 \le  300 \left(\frac{\bar V^3}{\delta^3}\right) e^{-\bar S/2}.
\]
For any nonzero linear combination $\psi = \sum_{j=1}^p \alpha_j\psi_j $, we have
\begin{align*}
\| \psi - \Phi_{(0,\mu)}\psi\| 
&\leq \sum_j |\alpha_j|  \| \psi_j - \Phi_{(0,\mu)} \psi_j\|
\\
&\leq \Big(300 \left(\frac{\bar V^3}{\delta^3}\right) e^{-\bar S/2} \Big)^{1/2} \sum_j |\alpha_j|
\\
&\le \Big(300 \left(\frac{\bar V^3}{\delta^3}\right) e^{-\bar S/2} \Big)^{1/2} \|\psi\| p^{1/2} < \|\psi \|,
\end{align*}
by the Cauchy-Schwarz inequality and because $p\le \bar N$. Denote by $Q$ the span of the $\psi_j$, $j=1,\, \dots, \,  p$. The inequality implies the restriction of $\Phi_{(0,\mu)}$ to $Q$ is injective and the dimension $N_0(\mu)$ of $\Phi_{(0,\mu)}(Q)$ is at least $p$. In other words, $N_0(\mu) \ge p$. The proof of the lower bound for $N(\mu)$ is similar.
\end{proof}

\section{Manifolds and approximation}\label{sec:manifolds}

In this section we discuss two generalizations of the results of Section~3: the extension to manifolds and the removal of the continuity assumption on the coefficients of $A$. We also prove the boundary regularity for mixed data referred to in Section~3 and construct
the intermediate sets $K_\ell$ of \eqref{4.1a} with the bi-Lipschitz cone condition.

Let us first see how to replace $\RR^n$ with an ambient space $\widehat M$ defined as a compact, connected  $C^1 $ manifold. Let $V$ be a  bounded measurable function satisfying $0 \le V(x) \le \bar V$ on $\widehat M$. Let $A$ be a symmetric two-tensor and let $m$ be a density on $\widehat M$. In a coordinate chart, $x$,  $A$ is represented locally by a symmetric matrix-valued function (which we shall still denote by $A$) and $m$ is represented by a scalar function. Given a test function $\eta=\eta(x)$ compactly supported in the coordinate chart, and a function $\vp=\vp(x)$, we write
\[
\langle A \nabla \vp, \nabla \eta\rangle 
:= \int (A \nabla \vp)\cdot \nabla \eta \, m\, dx,
\quad  \langle \vp,\eta\rangle := \int \vp \, \eta \, m \, dx .
\]
We extend these definitions to test functions on all of $\widehat M$ by using a partition of unity. The covariance property that makes this definition  independent of the choice of coordinate charts is that in a new coordinate system $y$ with $x= x(y)$, the expression for the corresponding matrix $ \tilde A (y) $ and density $\tilde m(y) $ is
\[
\tilde A (y) 
 = B(y)^{-1} A(x(y))  (B(y)^{-1})^T,
\quad  \tilde m(y) 
= |\det B| m(x(y)),
\]
where $B$ is the Jacobian matrix
\[
B_{ij}(y) = \frac{\partial x_i}{\partial y_j}, \quad B = (B_{ij}).
\]
For $\eta $ supported in the intersection (in the $x$ variable) of the two coordinate charts, denoting $\tilde \eta(y) = \eta(x(y))$, $\tilde \vp(y) = \vp(x(y))$, and $\tilde V(y) =V (x(y))$, we have 
\[
\int [(A\nabla \vp)\cdot \nabla \eta +
 V  \vp  \eta ] \,m \, dx 
=\int [(  \tilde A 
\nabla \tilde \vp) \cdot  \nabla \tilde \eta + \tilde V\tilde \vp \tilde \eta] \, \tilde m \, dy.
\]
Thus we obtain globally defined quantities $\langle A\nabla \vp,\nabla \eta\rangle$ and $\langle V\vp, \eta\rangle$. 

We will assume that in some family of coordinate charts covering all of $\widehat M$, $A$ is represented by bounded measurable, uniformly elliptic matrices and that $m$ is bounded above and below by positive constants. The constant of ellipticity and the constants bounding $m$ from above and below depend on the coordinate charts. But since our estimates won't depend on these constants, this does not matter to us.

Let $\Omega$ be an open, connected subset of $ \widehat M$ such that near each point of $\partial \Omega$, $\Omega$ is locally bi-Lipschitz equivalent to a half space. This includes as a special case, bi-Lipschitz images of Lipschitz domains in $\RR^n$ (for instance, bounded chord-arc domains in $\RR^2$). It also includes the case $\Omega = \widehat M$ in which the boundary is empty. Set $M = \bar \Omega$. Denote the inner product associated to $L^2(M)$ with density $m$ by $\langle \,\cdot\,,\,\cdot\,\rangle$. Let $K$ be a compact subset of $M$ and let $W^{1,2}_0(M\setminus K)$ denote the closure in $W^{1,2}$ norm of the set of functions in $C^1(M)$ that vanish on $K$. For $\vp\in W_0^{1,2}(M\setminus K)$ and  $f \in L^2(M\setminus K)$, the weak equation $L \vp= f$ on $M\setminus K$ is defined by
\[
\langle A \nabla \vp, \nabla \eta\rangle  + \langle V\vp, \eta\rangle = \langle f,\eta\rangle
\]
for every $\eta \in W_0^{1,2}(M \setminus K)$.

We will now prove H\"older regularity of solutions up to the boundary for suitable 
$K$ and $f$. 
\begin{proposition}\label{prop:boundaryHolder}  Suppose that
$\Omega\subset \wh M$ is locally bi-Lipschitz equivalent to a half space
at each boundary point.  
Suppose that $K$ satisfies the bi-Lipschitz cone condition
as defined above Proposition~\ref{prop:phi.j.Holder}. There is $\alpha>0$ such that if  $f\in L^\infty(M)$ and $\vp \in W^{1,2}(M)$, with $\vp =0$ on $K$, solves $(L-\mu)\vp = f$ in the weak sense on $M\setminus K$, then $\vp\in C^\alpha(M)$.  
\end{proposition}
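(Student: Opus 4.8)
The strategy is the classical one: reduce the boundary regularity to interior regularity via an even reflection across the boundary, so that the De Giorgi--Nash--Moser theorem (Theorem 8.24 of \cite{GT}) applies in a full neighborhood of each point. There are two distinct types of boundary point to treat, and near each I would flatten the relevant boundary piece first. \emph{Type 1: points of $\partial\Omega$ (where the Neumann condition holds).} Near $x_0\in\partial\Omega$, pull back by the bi-Lipschitz chart sending a neighborhood of $x_0$ in $\Omega$ to a half-ball $B_r^+\subset\RR^n_+$, with $x_0\mapsto 0$; under this change of variables (as recorded in Section~\ref{sec:manifolds}), $\vp$ still solves a weak equation of the same form with bounded measurable, uniformly elliptic $\tilde A$, bounded density $\tilde m$, bounded $\tilde V$ and bounded right-hand side $\tilde f+\mu\tilde\vp$, and the weak equation encodes the Neumann condition on $\{x_1=0\}$. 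Now define $\bar\vp$ on the full ball $B_r$ by $\bar\vp(x_1,x')=\tilde\vp(-x_1,x')$ for $x_1<0$, and correspondingly reflect the coefficients: $\bar A$ is the even reflection of the entries $a_{11}, a_{jk}$ ($j,k\ge 2$) and the odd reflection of the cross terms $a_{1j}$ ($j\ge 2$), while $\bar m$, $\bar V$, $\bar f$ are reflected evenly. One checks that $\bar\vp\in W^{1,2}(B_r)$ (the Neumann condition guarantees no distributional jump in the conormal derivative across $\{x_1=0\}$, so there is no singular part), that $\bar A$ is still uniformly elliptic with the same constants, and that $\bar\vp$ solves $-\frac1{\bar m}\div(\bar m\bar A\nabla\bar\vp)+\bar V\bar\vp=\bar f+\mu\bar\vp$ weakly on all of $B_r$. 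The interior De Giorgi--Nash--Moser theorem then gives $\bar\vp\in C^\alpha(B_{r/2})$, hence $\tilde\vp\in C^\alpha(B_{r/2}^+)$, hence $\vp$ is $C^\alpha$ near $x_0$ (bi-Lipschitz maps preserve Hölder continuity, with a possibly worse exponent).

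\emph{Type 2: points of $\partial K$ (where the Dirichlet condition $\vp=0$ holds).} Here we use the bi-Lipschitz cone condition: at $x_0\in\partial K$ there is a bi-Lipschitz $F\colon B_r(x_0)\to\RR^n$, $F(x_0)=0$, with $F(K)$ containing an interior cone $\{|x'|<\eps x_1<\eps^2\}$. In the flattened picture, $\vp$ vanishes on a set with interior corkscrew/cone condition, which is the standard geometric hypothesis under which solutions of uniformly elliptic equations are Hölder continuous up to the Dirichlet portion of the boundary (via the maximum principle and a barrier, or equivalently the oscillation decay from the De Giorgi--Nash--Moser theory combined with the fact that $\{\vp=0\}$ captures a fixed fraction of measure in each dyadic ball — a Wiener-type criterion that the cone condition makes trivially satisfied). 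This yields $C^\alpha$ regularity near $x_0$. A subtlety: one may have a boundary point lying simultaneously in $\partial\Omega$ and $\partial K$ (a mixed corner); there one first reflects as in Type~1 to remove the Neumann piece, then applies the Dirichlet argument in the reflected (doubled) domain, noting that the cone condition for $K$ survives the reflection.

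\emph{Conclusion.} Combining interior regularity (immediate from Theorem 8.24 of \cite{GT}) with the two boundary cases, $\vp\in C^\alpha(M)$ for some $\alpha>0$; the exponent and local constants depend on the ellipticity, on the bi-Lipschitz constants of $\Omega$ and of the charts $F$, and on $\eps$, but crucially only \emph{qualitative} continuity is needed later, so this dependence is harmless. I expect the main obstacle to be the verification that the reflected function $\bar\vp$ genuinely satisfies the reflected equation \emph{weakly across} $\{x_1=0\}$ with no spurious interface term: one must check carefully, using only the weak Neumann formulation (not a trace of the conormal derivative, which need not exist classically for merely bounded measurable $A$), that for every test function $\eta\in C^1(B_r)$ the two half-ball integrals assemble correctly after the change of variables $x_1\mapsto -x_1$ — the odd/even reflection pattern of the $a_{1j}$ is exactly what makes the cross terms match. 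The second, milder obstacle is bookkeeping the composition of two bi-Lipschitz changes of variables (the chart flattening $\partial\Omega$ and the map $F$ flattening $\partial K$) at a mixed corner without losing uniform ellipticity; since we never track constants, this is routine but must be stated.
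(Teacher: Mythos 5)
Your proposal is correct and follows essentially the same path as the paper's proof: even reflection across $\partial\Omega$ in a bi-Lipschitz chart, the covariant (odd/even) transformation of the coefficient matrix $A$ so that the reflected $\tilde A(y)=R\tilde A(Ry)R$ is still symmetric and elliptic, verification of the weak equation across $\{y_1=0\}$ using only the weak Neumann formulation, and then Dirichlet boundary regularity near $K$ via the cone condition (the paper cites Theorems 8.25--8.27 of Gilbarg--Trudinger for the steps you describe as the oscillation decay/Wiener argument). The only cosmetic difference is organizational: rather than splitting into two ``types'' and then handling mixed corners as a subtlety, the paper reflects once, replaces $K$ by $\tilde K = K\cup RK$ (which inherits the cone condition), and applies the Dirichlet theorem uniformly in the doubled ball; and it carries out the verification you correctly flag as the main obstacle by a short symmetrization trick, pairing the reflected equation against $\eta_*(y)=\tfrac12(\eta(y)+\eta(Ry))$, thereby avoiding any reference (even heuristic) to a distributional jump in the conormal derivative.
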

\begin{proof}  Without loss of generality, we can replace $V$ by $V-\mu$ and 
assume the constant $\mu=0$. 
As we have already observed, the interior H\"older regularity follows from the theorem of De Giorgi-Nash-Moser.  We handle the Neumann boundary conditions by
performing an even reflection at the boundary of $\Omega$.

It will suffice to consider a single coordinate chart denoted here by $y$. Let 
\[
B_r= \{y\in \RR^n: |y|<r\}, \quad Q =\{y\in \RR^n:  y_1\ge 0\}.
\]
and let $K$ be a compact subset of $\bar B_1\cap Q$ satisfying the bi-Lipschitz cone
condition.   Let $W^{1,2}((B_1\cap Q)\setminus K)$ 
be the closure in 
$W^{1,2}$ norm of functions of $C^1(\bar B_1 \cap Q)$ with
support disjoint from $K$.   If $\vp \in W^{1,2}((B_1\cap Q)\setminus K)$, then the
extension of $\vp$ by $0$ on $K$ belongs to $W^{1,2}(B_1\cap Q)$. 
For $f\in L^\infty((B_1\cap Q))$ we say $\vp$ solves $L\vp = f$ 
weakly on $(B_1\cap Q)\setminus K$ if 
\[
\int_{B_1\cap Q} [(A  \nabla \vp) \cdot \nabla \eta  + V \vp \eta] \, m \, dy
= \int_{B_1\cap Q} f  \eta  \, m \, dy
\]
for all $\eta\in C^1(B_1\cap Q)$ with support disjoint from $K$.   
(The fact that $\eta$ need not vanish on  $y_1=0$ is what imposes the Neumann condition in the weak sense.) Here, as usual, $A$ is a bounded measurable symmetric matrix,  $f$, $V$ and $m$ bounded measurable functions
defined in $B_1\cap Q$.  Moreover, $A$ is elliptic (see \eqref{eq:ellipticity}) and $1/C \le m(y) \le C$.  

We extend $m$, $V$, $\vp$ and $f$ to $B_1$ by reflection as follows.
Let $R$ be the reflection,
\[
R(y_1,y_2,\dots, y_n) = (-y_1,y_2,\dots, y_n).
\]
Set $\tilde m(y) = m(y)$, $\tilde V(y) = V(y)$, 
$\tilde \vp(y) = \vp(y)$, 
$\tilde f(y) = f(y)$,  for $y\in B_1\cap Q$, and 
\[
\tilde m(y) = \tilde m(Ry), \quad 
\tilde V(y) = \tilde V(Ry), \quad \tilde \vp(y) = \tilde \vp(Ry), \quad  \tilde f(y) = \tilde f(Ry).
\]
Define $\tilde K =  K \cup RK$, then $\tilde \vp = 0$ on $\tilde K$. 
We extend $A$ to $B_1$ by 
\[
\tilde A(y) = R \tilde A(R y) R.
\]
Note that this is just the appropriate covariance for the changes of variable $R$ since 
$R = R^T = R^{-1}$. In this way, we extend the definition of $L$ to an operator $\tilde L$ on $B_1$. 

We claim that $\tilde L \tilde \vp = \tilde f$ weakly on $B_1\setminus \tilde K$.
To prove this, let $\eta \in C^1(\bar B_1) $ be such that the support of $\eta$
is disjoint from $\tilde K \cup \partial B_1$.  Denote 
\[
\eta_*(y) = \frac12( \eta(y) + \eta(Ry)),  \quad y\in B_1. 
\]
Observe that the $*$ operation symmetrizes $\eta$, whereas $\tilde \vp$ and $\tilde f$ are defined so that they have this symmetry already: $\tilde \vp_* = \tilde \vp$  and $\tilde f_* = \tilde f$.

Denote the inner products on $L^2(B_1, \tilde m\, dx)$ and $L^2(B_1 \cap Q, m \, dx)$ 
by $\langle \,\cdot\, , \, \cdot\, \rangle_{B_1}$ and $\langle \,\cdot\, , \, \cdot\, \rangle_{Q}$, respectively. Since $\tilde \vp = \tilde \vp_*$, 
\[
\langle \tilde A 
\nabla \tilde \vp, \nabla \eta\rangle_{B_1} 
=
\langle  \tilde A 
\nabla \tilde \vp_*, \nabla \eta\rangle_{B_1} 
=
\langle  \tilde A 
\nabla \tilde \vp, \nabla \eta_*\rangle_{B_1}
=
2\langle   A 
\nabla \vp, \nabla \eta_*\rangle_{Q}
\]
Furthermore, using the fact that $\eta_*(y)=0$ on $K$ 
and the weak equation for $\vp$ on $B_1\cap Q$,  we have
\[
2 \langle  A \nabla  \vp, \nabla \eta_*\rangle_{Q}
=
2 \langle  f,  \eta_*\rangle_{Q} - 2\langle V\vp,   \eta_*\rangle_{Q}
= 
\langle  \tilde f,  \eta \rangle_{B_1} - \langle  \tilde V
\tilde \vp,   \eta \rangle_{B_1}.
\]
Combining these two equations, 
\[
\langle  \tilde A 
\nabla \tilde \vp, \nabla \eta\rangle_{B_1} 
+ \langle  \tilde V 
\tilde \vp,   \eta \rangle_{B_1} = 
\langle  \tilde f,  \eta \rangle_{B_1}.
\]
In other words, $\tilde L \tilde \vp = \tilde f$ weakly on $B_1\setminus \tilde K$, which
was what we claimed. 

We are now in a position to quote local boundary regularity theorems
of Gilbarg and Trudinger.   Theorems  8.25 and 8.26 of \cite{GT} imply 
that
\[
\sup_{B_{1/2}} |\vp|  \le C (\| \vp\|_{L^1(B_1)} + \|f\|_{L^\infty(B_1)}) 
\]
with a constant $C$ depending only on the ellipticity constants.
(Note that the appropriate notion of supremum for $W^{1,2}$ functions,
based on truncation, is defined just before Theorem 8.25.)

Next, the local Dirichlet boundary regularity theorem, Theorem 8.27 \cite{GT},
implies that since $K$ satisfies
the bi-Lipschitz cone
condition,\footnote{To apply the theorem as stated one has to 
make a bi-Lipschitz change of variables to produce an actual cone.
This changes the ellipticity constant by a fixed factor. 
There is an additional term in the estimate in Theorem 8.27, namely,
the oscillation of $\vp$ over $K\cap B_{\sqrt r}$. But in our case, this is zero.} 
there is $\alpha>0$ such that for $r \le 1/4$, 
\[
\underset{B_{r}}{\mbox{osc}}\ 
 \vp \le C\, r^\alpha \sup_{B_{1/2}}|\vp|.
\]
This proves H\"older continuity up to the boundary.
\end{proof}

\begin{lemma}\label{lem:cubecover}
Let $K$ be a compact subset of $M$. Let $U$ be a (relatively) open set in $M$ 
such that $K\subset U$. Then there is 
a compact set $K'$, such that $K\subset K' \subset U$, 
$K'$  satisfies the bi-Lipschitz cone condition.
\end{lemma}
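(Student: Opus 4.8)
The plan is \emph{not} to approximate $K$ from the outside by a union of ambient cubes and then intersect with $M$: that would create uncontrollably thin slivers of $K'$ wherever $K$ approaches $\partial\Omega$ (for instance when a cube face is nearly tangent to $\partial\Omega$), and at such slivers $K'$ has no interior cone. Instead I would cover $K$ \emph{from the inside}: for each $p\in K$ I choose a compact neighborhood $Q_p$ of $p$ in $M$, contained in $U$, which is \emph{individually} a bi-Lipschitz image of a closed cube (when $p\in\Omega$, using a $C^1$ chart) or of a closed half-cube $[0,\rho]\times[-\rho,\rho]^{n-1}$ (when $p\in\partial\Omega$, using a chart that straightens the boundary), extract a finite subcover of $K$, and let $K'$ be the union of the corresponding $Q_{p_i}$.

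Two elementary facts underlie this. First, a closed cube, or a closed half-cube $[0,\rho]\times[-\rho,\rho]^{n-1}$, satisfies the bi-Lipschitz cone condition with constants depending only on $n$ and $\rho$: at a boundary point $x_0$ one takes a fixed-aperture, fixed-length cone directed coordinatewise toward the barycenter (i.e.\ in the direction with $i$-th entry $\operatorname{sgn}(\tfrac\rho2-x_{0,i})$, including the choice $+e_1$ on the face $\{y_1=0\}$ of the half-cube), a short computation showing it lies in the body, and then one composes the witnessing map $F$ with the rotation carrying that direction to $e_1$. Second, the bi-Lipschitz cone condition is invariant under bi-Lipschitz homeomorphisms $\Psi$ (the witness $F$ at $x_0$ becomes $F\circ\Psi^{-1}$ at $\Psi(x_0)$, with constants changed only by the bi-Lipschitz constant of $\Psi$ after shrinking the ball); applying this with $\Psi$ a $C^1$ chart, respectively a boundary-straightening chart $\Phi_p$, shows each $Q_p$ satisfies the condition at every point of its boundary in $\widehat M$ — on the part of $\partial Q_p$ lying in $\partial\Omega$, the chart $\Phi_p$ carries $Q_p$ onto $M$ locally and the flat face $\{y_1=0\}$ of the half-cube contains the needed $e_1$-cone.

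With the blocks in hand, the rest is point-set topology. The interiors of the $Q_p$ relative to $M$ form a relatively open cover of the compact set $K$, so finitely many suffice, coming from $p_1,\dots,p_N$; set $K'=\bigcup_{i=1}^N Q_{p_i}$. Then $K'$ is compact, $K\subset K'$, and $K'\subset U$ since each $Q_{p_i}\subset U$. To check the bi-Lipschitz cone condition for $K'$: if $x\in\partial K'$, then (as $K'$ is closed) $x\in Q_{p_i}$ for some $i$; if $x$ were interior to $Q_{p_i}$ it would be interior to $K'$, so $x\in\partial Q_{p_i}$. The map witnessing the condition for $Q_{p_i}$ at $x$ also witnesses it for $K'$, because $Q_{p_i}\subset K'$ gives $F(K')\supset F(Q_{p_i})$, which contains the model cone. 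Taking the minimum over $i$ of the finitely many radii and constants produces a single admissible pair. (In the degenerate case $U=M$ one simply takes $K'=M$: that $M$ satisfies the bi-Lipschitz cone condition is, by compactness of $\partial\Omega$, the observation that a closed half-space does.)

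The only real obstacle, in my view, is getting the building blocks near $\partial\Omega$ right: the naive cubical construction fails there, and one must recognize both that half-cubes in boundary-straightening coordinates are the correct objects and that the bi-Lipschitz cone condition survives the passage through those merely bi-Lipschitz charts — so that the potentially awkward boundary points of $K'$ sitting on $\partial\Omega$ are automatically handled.
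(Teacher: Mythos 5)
Your proof is correct and takes essentially the same approach as the paper: cover $K$ by finitely many bi-Lipschitz images of cubes (or half-cubes along $\partial\Omega$) contained in $U$, and observe that the bi-Lipschitz cone condition is preserved under bi-Lipschitz charts and under finite unions. The paper's only minor variation is to produce the small blocks by dyadically subdividing a fixed finite cover of $M$ by coordinate-chart cubes and retaining those that meet $K$, rather than choosing a chart cube around each point of $K$ and extracting a finite subcover; the spirit and the two key facts you isolate (invariance of the cone condition under bi-Lipschitz maps, stability under finite unions via the ``pick the block on whose boundary $x$ lies'' argument) are exactly what the paper is using, just left tacit there.
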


\begin{proof}   
To find $K'$ given $K$, cover  $M$ with a finite number of coordinate 
charts each of which is the bi-Lipschitz image of a closed cube, 
some of them interior to $\Omega$
and others with a boundary face on $\partial \Omega$.   Fix $\eps>0$,
and  subdivide each closed cube of the covering dyadically to get
a finite covering by cubes of diameter less than $\eps$.  Note
that although this is not a disjoint covering because of the overlap of 
the coordinate charts, it is a finite covering.  Define $K'$ as the union of cubes in
the subdivision that intersect  $K$.  

For $\eps$ sufficiently small $K'\subset U$.  Each individual
bi-Lipschitz cube satisfies the bi-Lipschitz cone condition, 
so this finite union also satisfies the condition
\end{proof}

\bigskip

The last difficulty that we wish to address is that the Agmon length of paths is not defined for discontinuous $A$.  Suppose that $A$ is bounded and measurable
(and symmetric and uniformly elliptic as in \eqref{eq:ellipticity}).   Using
convolution on coordinate charts and a partition of unity, we find a sequence of $A^\eps$ of continuous uniformly elliptic two-tensors such that $A^\eps$ tends pointwise to $A$ as $\eps \to 0$. Denote by $L$ and $L_\eps$ the operators on $M$ corresponding formally in local coordinates to $-(1/m)\div(m A\nabla) + V$ and $-(1/m)\div(mA^\eps \nabla) + V$.  

\begin{proposition} \label{prop:approx.A}  
Let $\lambda_\eps$ be a bounded sequence, and suppose that $L_\eps \psi_\eps = \lambda_\eps \psi_\eps$ in the weak sense on $M$, and normalize the eigenfunctions by $\|\psi_\eps\| = 1$ in $L^2(M)$. Then there is a subsequence $\eps_j\to 0$ such that  
\begin{enumerate}[a)]
\item $\psi_{\eps_j}$  has a limit $\psi$ in $W^{1,2}(M)$ norm and in  $C^\alpha(M)$ norm for some $\alpha>0$.
\item $\lambda_{\eps_j}$ has a limit $\lambda$ and $L\psi = \lambda\psi$ in the weak sense on $M$.
\end{enumerate}
\end{proposition}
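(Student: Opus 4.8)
The plan is to extract a convergent subsequence using standard compactness arguments and then pass to the limit in the weak eigenvalue equation. First I would observe that since $\|\psi_\eps\|_{L^2(M)} = 1$ and $L_\eps\psi_\eps = \lambda_\eps\psi_\eps$ with $\lambda_\eps$ bounded, testing the weak equation against $\psi_\eps$ itself and using the \emph{uniform} ellipticity of the $A^\eps$ (the ellipticity constant in \eqref{eq:ellipticity} can be taken the same for all $\eps$, since mollification preserves the bounds) together with $0 \le V \le \bar V$ gives a uniform bound on $\int_M |\nabla\psi_\eps|^2\, m\, dx$, hence a uniform bound on $\|\psi_\eps\|_{W^{1,2}(M)}$. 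By the Rellich--Kondrachov compactness of $W^{1,2}(M)\hookrightarrow L^2(M)$ (valid by the bi-Lipschitz assumption on $\Omega$, as already used in Section~\ref{sec:prelim}), and weak compactness of bounded sets in $W^{1,2}(M)$, I can pass to a subsequence $\eps_j\to 0$ along which $\psi_{\eps_j}\rightharpoonup\psi$ weakly in $W^{1,2}(M)$ and $\psi_{\eps_j}\to\psi$ strongly in $L^2(M)$; in particular $\|\psi\| = 1$, so $\psi\not\equiv 0$. Passing to a further subsequence, $\lambda_{\eps_j}\to\lambda$ for some $\lambda$.

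Next I would establish the $C^\alpha$ convergence. Each $\psi_{\eps_j}$ is a weak solution of a uniformly elliptic equation with bounded right-hand side $\lambda_{\eps_j}\psi_{\eps_j}$, which is bounded in $L^2$ and (by a Moser iteration / De Giorgi--Nash--Moser bootstrap, or by first upgrading $\psi_{\eps_j}$ to $L^\infty$ and then quoting interior and boundary H\"older estimates exactly as in Propositions~\ref{prop:ubounds} and~\ref{prop:boundaryHolder}) bounded in $C^\alpha(M)$ for some $\alpha > 0$ and some uniform constant --- crucially, the De Giorgi--Nash--Moser constants depend only on the ellipticity ratio and not on the modulus of continuity of the coefficients, which is exactly the point emphasized in the introduction. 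Then by Arzel\`a--Ascoli, passing to one more subsequence, $\psi_{\eps_j}\to\psi$ in $C^{\alpha'}(M)$ for any $\alpha' < \alpha$, so $\psi\in C^{\alpha'}(M)$; this proves part~(a) up to the strong $W^{1,2}$ convergence, which I address below.

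To prove $L\psi = \lambda\psi$ weakly, fix $\eta\in C^1(M)$ and write the weak equation for $\psi_{\eps_j}$:
\[
\int_M (A^{\eps_j}\nabla\psi_{\eps_j})\cdot\nabla\eta\, m\, dx + \int_M V\psi_{\eps_j}\eta\, m\, dx = \lambda_{\eps_j}\int_M \psi_{\eps_j}\eta\, m\, dx.
\]
The second and third integrals converge to $\int_M V\psi\eta\, m\, dx$ and $\lambda\int_M\psi\eta\, m\, dx$ by the $L^2$ convergence of $\psi_{\eps_j}$ and of $\lambda_{\eps_j}$. For the first term I split $A^{\eps_j}\nabla\psi_{\eps_j} = (A^{\eps_j}-A)\nabla\psi_{\eps_j} + A\nabla\psi_{\eps_j}$; the second piece converges to $\int_M(A\nabla\psi)\cdot\nabla\eta\, m\, dx$ by weak $W^{1,2}$ convergence of $\psi_{\eps_j}$ (since $A\nabla\eta\, m\in L^2$), and the first piece tends to zero because $A^{\eps_j}\to A$ pointwise with $|A^{\eps_j}|$ uniformly bounded, so $(A^{\eps_j}-A)\nabla\eta\to 0$ in $L^2(M)$ by dominated convergence while $\nabla\psi_{\eps_j}$ is $L^2$-bounded --- rewriting $\int_M(A^{\eps_j}-A)\nabla\psi_{\eps_j}\cdot\nabla\eta\, m\, dx = \int_M\nabla\psi_{\eps_j}\cdot(A^{\eps_j}-A)^T\nabla\eta\, m\, dx$ makes this a genuine (bounded sequence)$\times$($L^2$-null sequence) pairing. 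This yields the weak equation for $\psi$ on $M$, which is part~(b). Finally, for the strong $W^{1,2}$ convergence in part~(a): test the equation for $\psi_{\eps_j}$ against $\psi_{\eps_j}$ and the equation for $\psi$ against $\psi$, subtract, and use the pointwise convergence of $A^{\eps_j}$, the strong $L^2$ convergence of $\psi_{\eps_j}$, and the convergence $\lambda_{\eps_j}\to\lambda$ to conclude $\int_M(A^{\eps_j}\nabla\psi_{\eps_j})\cdot\nabla\psi_{\eps_j}\, m\, dx\to\int_M(A\nabla\psi)\cdot\nabla\psi\, m\, dx$; combined with the uniform ellipticity and weak convergence this forces $\nabla\psi_{\eps_j}\to\nabla\psi$ strongly in $L^2$. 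The main obstacle is this last step: one must handle the varying coefficient matrices $A^{\eps_j}$ carefully, since $\int(A^{\eps_j}\nabla\psi_{\eps_j})\cdot\nabla\psi_{\eps_j}$ is not a fixed quadratic form, so the usual ``weak convergence plus norm convergence implies strong convergence'' argument needs the extra observation that $\int(A^{\eps_j}-A)\nabla\psi_{\eps_j}\cdot\nabla\psi_{\eps_j}\to 0$, which follows from $\|(A^{\eps_j}-A)\|_{L^p}\to 0$ on the bounded manifold for every finite $p$ together with higher integrability of $\nabla\psi_{\eps_j}$ from Meyers' estimate, or alternatively from the $C^\alpha$ (hence $L^\infty$) bound on $\psi_{\eps_j}$ used to control a Caccioppoli-type argument.
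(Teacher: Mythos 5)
Your proof is structurally the same as the paper's through the passage to the weak limit equation, and that part is fine: uniform $W^{1,2}$ bound by testing against $\psi_\eps$, De Giorgi--Nash--Moser for a uniform $C^\beta$ bound, Arzel\`a--Ascoli, and passing to the limit in the weak form by splitting $A^{\eps_j}\nabla\psi_{\eps_j}\cdot\nabla\eta = (A^{\eps_j}-A)\nabla\psi_{\eps_j}\cdot\nabla\eta + A\nabla\psi_{\eps_j}\cdot\nabla\eta$, noting that $(A^{\eps_j}-A)\nabla\eta\to 0$ in $L^2$ by dominated convergence. Where you diverge from the paper is the last step, strong $W^{1,2}$ convergence. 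You correctly identify the obstacle (the quadratic form $\langle A^{\eps_j}\nabla\cdot,\nabla\cdot\rangle$ is not fixed) and propose to show $\int (A^{\eps_j}-A)\nabla\psi_{\eps_j}\cdot\nabla\psi_{\eps_j}\,m\,dx\to 0$ via Meyers' higher integrability of $\nabla\psi_{\eps_j}$. That would work, but it is heavier machinery than necessary, and it is not what the paper does. The paper instead expands the \emph{$A^{\eps}$-quadratic form of the difference}, namely
\begin{equation}
\langle A^{\eps}\nabla(\psi_\eps-\psi),\nabla(\psi_\eps-\psi)\rangle
= \langle A^{\eps}\nabla\psi_\eps,\nabla\psi_\eps\rangle
- 2\langle A^{\eps}\nabla\psi,\nabla\psi_\eps\rangle
+ \langle A^{\eps}\nabla\psi,\nabla\psi\rangle ,
\end{equation}
and sends each term to its limit separately: the first via the weak equation ($\to \lambda - \langle V\psi,\psi\rangle$), the third by dominated convergence applied to the \emph{fixed} $\nabla\psi$, and the middle cross term by splitting $A^\eps = A + (A^\eps - A)$, so that $(A^\eps-A)$ is paired only with the fixed $\nabla\psi$ (dominated convergence gives $\|(A^\eps-A)\nabla\psi\|_{L^2}\to 0$, which kills that piece against the bounded $\nabla\psi_\eps$), while the $A$ piece converges by weak $L^2$ convergence of $\nabla\psi_\eps$. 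Adding these up gives exactly $0$ because $\langle A\nabla\psi,\nabla\psi\rangle + \langle V\psi,\psi\rangle = \lambda$ once part (b) is known, and uniform ellipticity of $A^\eps$ then yields $\nabla\psi_\eps\to\nabla\psi$ in $L^2$. The point you should take away is that by centering the quadratic form at $\psi$ and keeping $A^\eps$ (rather than $A$) inside it, the term where $(A^\eps-A)$ hits $\nabla\psi_\eps$ simply never appears, so no higher integrability is needed --- which also keeps the argument insensitive to domain regularity and matches the paper's theme of avoiding any quantitative dependence on coefficients' regularity.
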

\begin{proof}
By the nondegeneracy of $V$, the sequence $\psi_\eps$ is uniformly bounded in $W^{1,2}(M)$ norm. Moreover by de Giorgi-Nash-Moser regularity the sequence is bounded in $C^\beta(M)$ norm for some $\beta>0$. Note that $\beta$ can be chosen independently of $\eps$ because ellipticity constants of $A^\eps$ are uniformly controlled. By the compactness of $C^\beta(M)$ in $C^\alpha(M)$ for $\alpha < \beta$ and the weak compactness of the unit ball of $W^{1,2}(M)$, there is a subsequence $\eps_j\to 0$ such that $\psi_{\eps_j}$ converges in $C^\alpha(M)$ norm to a function $\psi\in C^\alpha(M) \cap W^{1,2}(M)$. Moreover, $\nabla \psi_{\eps_j} \to \nabla \psi$  weakly in $L^2(M)$ and $\lambda_{\eps_j} \to \lambda$ as $j\to \infty$. Hence, taking the weak limit in the equation $L_\eps \psi_\eps = \lambda_\eps\psi_\eps$, we obtain, $L \psi  = \lambda \psi$.  

It remains to show that $\nabla \psi_{\eps_j}$ tends to $\nabla \psi$ in $L^2(M)$ norm.  Indeed, by the dominated convergence theorem,
\begin{equation}\label{5.3a}
\|(A^{\eps_j}-A)\nabla \psi\|  \to 0 \quad \mbox{as}\ j\to \infty.
\end{equation}
From now on, we will omit the subscript $j$ from $\eps$ with the understanding that we have passed to a subsequence of the $A^\eps$ and the $\psi_\eps$. It follows that, along this subsequence,
\[
\langle (A-A^\eps) \nabla\psi ,\nabla \psi\rangle \to 0 \ 
\hbox{ and }  
\ \langle A^\eps \nabla\psi ,\nabla \psi\rangle \to
\langle A \nabla\psi ,\nabla \psi\rangle.
\]
Furthermore, since $\|\nabla \psi_\eps\|$ is uniformly bounded and by \eqref{5.3a}, 
\[
\langle (A-A^\eps) \nabla\psi ,\nabla \psi_\eps\rangle \to 0 .
\]
This combined with the weak limit 
$ \langle A \nabla\psi ,\nabla \psi_\eps\rangle \to 
 \langle A \nabla\psi ,\nabla \psi \rangle $
 yields
 \[
 \langle A^\eps\nabla\psi ,\nabla \psi_\eps\rangle \to   
 \langle A \nabla\psi ,\nabla \psi \rangle.
 \]
Using  the identity $L_\eps \psi_\eps = \lambda_\eps \psi_\eps$, we write
\[
\langle A^\eps \nabla \psi_\eps, \nabla \psi_\eps \rangle = \lambda_\eps - \langle V\psi_\eps,\psi_\eps\rangle \to \lambda - \langle V\psi, \psi\rangle.
\]
Finally, 
\[
\langle A^\eps \nabla (\psi_\eps-\psi),\nabla(\psi_\eps-\psi)\rangle
= \langle A^\eps \nabla \psi_\eps ,\nabla \psi_\eps\rangle
-2 \langle A^\eps \nabla \psi,\nabla \psi_\eps\rangle
+\langle A^\eps \nabla \psi,\nabla \psi\rangle.
\]
The first term of this last expression, 
$\langle A^\eps \nabla \psi_\eps ,\nabla \psi_\eps\rangle \to 
\lambda - \langle V\psi, \psi\rangle$.  The second term tends to 
$-2\langle A\nabla \psi,\nabla\psi\rangle$ and the third term to 
$\langle A\nabla \psi,\nabla\psi\rangle$.   But $L\psi = \lambda \psi$
implies $\langle A\nabla \psi,\nabla\psi\rangle = \lambda - \langle V\psi, \psi\rangle$.
Thus 
\[
\langle A^\eps \nabla (\psi_\eps-\psi),\nabla(\psi_\eps-\psi)\rangle \to 0
\]
along the subsequence and $\nabla\psi_\eps$ tends in $L^2(M)$ norm to $\nabla \psi$. 
\end{proof}

Let $A$ have bounded measurable coefficients and let $A^\eps$ be a continuous approximation as above. Then the compactness argument in the proposition also shows that the landscape function $u_\eps$ tends uniformly to the landscape function $u$ along a suitable subsequence. Because the Agmon distance functions are uniformly Lipschitz, at the expense of a further subsequence, one can ensure that this distance also converges uniformly. Notice that different sequences could, in principle, yield different limiting Agmon distances. For any of the limits we can now deduce estimates analogous to the ones in the previous sections.

We illustrate with \eqref{eq:expdecay.psi} and discuss the subsequent theorems later. Fix $\mu$ and let $W_\mu$ be the subspace of $L^2(M)$ spanned by the eigenfunctions of $L$ with eigenvalue $\le \mu$, and let $N$ be the dimension of $W_\mu$. Denote
\[
\mu_\eps = \sup_{\psi\in W_\mu} 
\frac{\langle L_\eps \psi ,\psi\rangle}{\langle \psi,\psi\rangle}.
\]
Let $\psi_j^\eps$, $j=1,\, \dots, \, N$ be the first $N$ eigenfunctions of $L_\eps$, and let $\lambda_j^\eps$ be the corresponding eigenvalues. It follows from the min/max principle and the fact that $W_\mu$ has dimension $N$ that $\lambda_j^\eps \le \mu_\eps$, $ j \le N$.

We claim that 
\begin{equation}\label{5.4a}
\limsup_{\eps \to 0} \mu_\eps \le \mu . 
\end{equation}
In fact, if $\psi_j$ satisfying $L \psi_j = \lambda_j\psi_j$, $j=1,\, \dots,\, N,$ is an orthonormal basis of $W_\mu$, then by the dominated convergence theorem, for every $\delta>0$ there is $\eps_0>0$ such that for $\eps < \eps_0$, 
\[
|\langle L_\eps \psi_j, \psi_k \rangle - \delta_{jk} \lambda_j| \le \delta.
\]
Representing $\psi$ as a linear combination of the $\psi_j$, we deduce from $\lambda_j\le \mu$ that $\mu_\eps \le \mu + N^2 \delta$. Hence \eqref{5.4a} holds.

By Proposition~\ref{prop:approx.A}, for a suitable subsequence of values of $\eps$  the orthonormal basis $\psi_j^\eps$, $j\le N$, tends in $C^\alpha(M)$ and $W^{1,2}(M)$ norm to an orthonormal set of eigenfunctions of $L$ with eigenvalues $\le \mu$. Since $W_\mu$ has dimension $N$, this limiting set must be a basis for $W_\mu$. Moreover, these eigenfunctions inherit the inequality \eqref{eq:expdecay.psi}.

There is a difference between this statement and the preceding one, applicable to continuous $A$. Here we only claim that there exists a basis of the eigenfunctions that satisfies \eqref{eq:expdecay.psi}. If an eigenvalue has multiplicity then the estimate may not apply to all linear combinations of the particular eigenbasis we obtain by taking limits. Thus, we have not ruled out the possibility that there has to be an extra factor of the multiplicity of the eigenspace in inequality \eqref{eq:expdecay.psi}. Similarly, in the comparisons with localized eigenfunctions in Theorem~\ref{thm:spectralprojection}, we can only deduce that they are valid for {\em some} basis of eigenfunctions $\psi_j$ and $\vp_{\ell,j}$.  

We leave open whether in the case of discontinuous $A$, it is possible to recover the full theorem for continuous coefficients for eigenfunctions with multiplicity. Another question that we are leaving open in the discontinuous case is whether the limiting Agmon distance is unique, that is, does not depend on the choice of the sequence $A^\eps$. Even if the limit is not unique, there could be an optimal (largest) choice of $h$ satisfying the Agmon bound $|\nabla_A h|^2 \le w_\mu(x)$.

\end{document}